\newtheorem{theorem}{Theorem}[section]
\newtheorem{lemma}[theorem]{Lemma}
\newtheorem{proposition}[theorem]{Proposition}
\newtheorem{corollary}[theorem]{Corollary}
\theoremstyle{definition}
\newtheorem{example}[theorem]{Example}
\newtheorem{problem}[theorem]{{\sc Problem}}
\newtheorem{problems}[theorem]{{\sc Problems}}
\newtheorem{claim}{\noindent {\bf Claim}}
\newcommand{\R}{\mathbb R}
\newcommand{\N}{\mathbb N}
\newcommand{\Z}{\mathbb Z}
\newcommand{\Q}{\mathbb Q}
\DeclareMathOperator{\Cong}{Cong}
\DeclareMathOperator{\Eqv}{Equiv}
\DeclareMathOperator{\Inv}{Inv}
\DeclareMathOperator{\Pol}{Pol}
\title[Preservation of discrete structures] {Preservation of discrete structures. A metric point of view}
\author[C.Delhomm\'e]{Christian Delhomm\'e}
\address{LIM-ERMIT, D\'epartement de Math\'ematiques et Informatique, Universit\'e de La R\'eunion, 15 avenue Ren\'e Cassin - BP 7151 - 97715 Saint-Denis Messag. Cedex 9 FRANCE}
\email[Christian Delhomm\'e]{delhomme@univ-reunion.fr}
\author[M.Miyakawa]{Masahiro Miyakawa}
\address{Tsukuba University of Technology,
4-12-7 Kasuga, Tsukuba,
 Ibaraki 305-8521, Japan} \email {mamiyaka@cs.k.tsukuba-tech.ac.jp, tatsumi@cs.k.tsukuba-tech.ac.jp}
\author[M.Pouzet] {Maurice Pouzet}
\address{Univ. Lyon, Universit\'e Claude-Bernard  Lyon1, CNRS UMR 5208, Institut Camille Jordan, 
43, Bd. du 11 Novembre 1918,
69622 Villeurbanne, France et Department of Mathematics and Statistics, The University of Calgary, Calgary, Alberta, Canada}
\email{pouzet@univ-lyon1.fr}
\author [H. Tatsumi]{Hisayuki Tatsumi}
\address{Tsukuba University of Technology,
4-12-7 Kasuga, Tsukuba,
Ibaraki 305-8521, Japan} \email{
tatsumi@cs.k.tsukuba-tech.ac.jp}
\date{\today}
\keywords{Clones, rigidity, semirigidity, equivalence relations, $3$-nets, latin squares, quasigroups}
\subjclass[2000]{94D05, 03B50}
\begin{document}

\dedicatory {\dagger \; Dedicated to the memory of Ivo G.Rosenberg}

\begin{abstract} In the early 80's,  Alain Quilliot presented an approach of  ordered sets and graphs in terms  of metric spaces, where instead  of positive real numbers, the values of  the distance are elements of an ordered monoid equipped with an involution. This point of view was further developed in a series of papers by Jawhari, Misane, Pouzet, Rosenberg and Kabil  \cite {jawhari-al, pouzet-rosenberg, kabil-pouzet}. 
Some  results are currently published by Rosenberg, Kabil and Pouzet, Bandelt, Pouzet and Sa\"idane, Khamsi and Pouzet
\cite {KPR, bandelt-pouzet, 
bandelt-pouzet-saidane,
khamsi-pouzet}. Special aspects were developed by the authors of the present paper  \cite{MPRT, delhomme-pouzet}.  A survey  on generalized metric spaces is in print \cite{kabil-pouzet2}. In this paper, we review briefly the salient aspects of the theory of generalized metric spaces, then we  illustrate the properties of the preservation,  by operations,  of sets  of relations,  notably binary relations, and particularly equivalence relations. 

\end{abstract}
\maketitle

\section{Presentation}
We  survey  some aspects of the preservations by operations of discrete structures like graphs, ordered sets and transition systems. In those aspects, the influence of Ivo Rosenberg was prominent to say the less. Despite the sadness of his passing, we are pleased to give hommage to his influence. 

We consider graphs, ordered sets and transition systems  as generalized metric spaces. Instead of positive real numbers, the values of  the distance are elements of an ordered monoid equipped with an involution; the  notion  of non-expansive map particularly fit in this frame. Several notions and results about ordinary metric spaces and their non-expansives maps extend to these structures, e.g., the characterization of injective objets by Aronszajn and Panitchpakdi \cite{aronszajn-panitchpakdi}, as well as the fixed point result of Sine and Soardi \cite{sine, soardi} (see Espinola-Khamsi \cite{espinola-khamsi} for a survey of classical results on metric spaces). As an illustration,   we mention that in the category of oriented reflexive graphs with Quilliot's zigzag distance, the absolute retracts are the retracts of products of zigzags (see \cite{bandelt-pouzet, bandelt-pouzet-saidane}) and, on those which are bounded, any set of graph homomorphism  which commute pairwise has  a fixed point (\cite{khamsi-pouzet}).  Also, as a byproduct  of the description of the injective envelope of  a two-element transition system, Kabil, Rosenberg and the third author of this paper \cite{KPR} have shown that 
the monoid of the final sections of the free monoid  of words on a finite alphabet is also free.

 The approach of graphs and ordered sets  as metric spaces  has its origin in Alain Quilliot's work done in the early 80's  \cite{Qu1, Qu2}. It was  developed  by Jawhari, Misane and the third  author of this paper \cite {jawhari-al}. Later on, Rosenberg and the third  author of this paper \cite{pouzet-rosenberg} developped a more general approach,  seeing operations  preservating  relations as kind of non-expansive maps. They put some emphasis  
 on the study of operations preserving systems of equivalence relations on  a set, viewing these systems as ultrametric spaces. More recently, this lead to the study of semirigid structures (for which the only unary self maps are the identity and the constants), notably those made of systems of three equivalence relations \cite{MPRT, delhomme-pouzet}. 
 
 This paper is composed as follows. 
We introduce directed graphs equipped with the zigzag distance as a motivating example. 
Then,  we  present briefly a survey of properties of generalized metric spaces over a Heyting algebra, alias a dual integral involutive quantale (in short a $D^2I$-quantale). 
We illustrate the main results with ordinary metric spaces, ultrametric spaces, graphs (directed or oriented), ordered sets and transition systems. We use the notion of injective envelope to prove the freeness of the algebra of nonempty final segments of the set of words on an alphabet, a recent result of Kabil, Rosenberg and the third author of this paper \cite{KPR}. We defer the proofs to \cite {jawhari-al} and the forthcoming survey \cite{kabil-pouzet}. We introduce to the duality between relations and operations, via the notion of preservation. We focuse on the case of binary relations and more specifically on equivalence relations. Sublattices of the lattice of equivalence relations on a set play an important role in algebra,  basic examples being congruence lattices of   algebras.  Among those are  arithmetical lattices. They lead to generalized ultrametric spaces close to the  hyperconvex ones (they are finitely hyperconvex). A typical arithmetical lattice is the lattice of congruences of the additive group $\Z$. We present the result of C\'egielski, Grigorieff and Guessarian (CGG), 2014 \cite{cgg1, cgg2} describing the operations preserving this lattice; we give a short proof of the crucial argument due to the first author of this paper. The case of the additive group of $\Z\times \Z$ is completely different. While on the lattice of congruences of $\Z$  the number of operations preserving the congruences is the continuum, there are only countably many operations preserving the congruences of $\Z\times \Z$. In fact preserving three congruences to preserve all is enough. Next,  we go the the opposite direction: the study of semirigidity. A relational structure is \emph{semirigid} if the only operations preserving it are the constant and projection maps. In the case of  a set of equivalence  relations this amount to the fact that the self maps preserving it are the constant maps and the identity map. We recall Zadori's result proving that for $n\not = 4, 2$ there is a semirigid set of  three equivalence relations  an $n$-element set.  Using geometric properties of the plane, we show that for each cardinal $\kappa$, $\kappa\not \in \{2,4\}$ and  $\kappa\leq 2^{\aleph_0}$, there exists a semirigid system  of three equivalences on a set of cardinality $\kappa$. We leave the question for $\kappa> 2^{\aleph_0}$ as a conjecture. 

\section*{acknowledgement} This paper was prepared while  the third author of this paper stayed at the University of Tsukuba from May 8 to July 6, 2019. Support provided by  the  JSP is gratefully acknowledged. Some of the results were presented at the Department of Electronics and Computer  Engineering   of the  Hiroshima Institute of Technology,   seminar of professor Tomoyuki Araki (June 6, 2019), at the Department of Intelligent robotics, Toyama Prefectural University, seminar of professor Noboru Tagaki, (June 12, 2019) and at the Dept. of Mathematics of the University of Kyoto, seminar of  professor Taketomo Mitsui (June 26 2019); professors Araki, Tagaki and Mitsui are warmly thanked for their welcome.The third author of this paper is pleased to thank Serge Grigorieff for discussion about CGG's theorem and bibliographical informations.

\section{Motivation: distances on graphs}
We present the zigzag distance on directed graphs introduced by Alain Quilliot in 1983 \cite{Qu1, Qu2}. 

Let $G$ be an undirected graph, that is,  a pair $(V, \mathcal E)$, where $V$ is a set of elements called  \emph{vertices}\; and  $\mathcal E$ is a set of pairs of (distinct) vertices called \emph{edges}. If $x$ and $y$ are two vertices, the \emph{distance}\;  from $x$ to $y$, denoted by $d_G(x,y)$,  is $0$ if $x=y$,  $m$ if $m$ is the length of the shortest path joining $x$ to $y$ and $\infty$ if there is no path between $x$ and $y$ (that is $x$ and $y$ belong to two different connected components of $G$).  

This notion of distance with integer values plays a basic role in the study of graphs. 
In \cite{Qu1, Qu2}, Alain Quilliot proposed an  extension of  this notion to ordered sets and directed graphs.

A \emph{{directed graph}}\;  $G$ is a pair $(V, \mathcal E)$, where $\mathcal E$ is a binary relation on $V$. We say that  $G$ is  \emph{reflexive}\;  if $\mathcal E$ is reflexive and we say  that $G$ is \emph{oriented}\;  if $\mathcal E$ is \emph{antisymmetric} (that is $(x,y)$ and $(y,x)$ cannot be in $\mathcal E$ simultaneously except if $x =y$). Typical examples of oriented graphs are \emph{{ordered sets}}.

If $\mathcal E$ is symmetric, and reflexive, we may identify it with a subset of pairs of distinct elements of $V$, so we get an undirected graph. 
\subsection{From paths to zigzags}

 Let us recall that a finite \emph{path}\;  is an undirected graph $L:=(V, \mathcal E)$ such that one can enumerate the vertices into a non-repeating sequence  $v_0, \dots, v_n$ such that edges are the  pairs $\{v_i,v_{i+1}\}$ for $i<n$. 
 
 Recall that if  $G:= (V, \mathcal E)$ and $G':= (V', \mathcal E')$ are two  graphs,  a \emph{{homomorphism}} \;  from $G$ to $G'$ is a map $h:V\rightarrow   V'$ such that $(h(x), h(y)) \in \mathcal E'$ whenever $(x,y)\in \mathcal E$ for every $(x,y)\in V \times V$. If $G'$ has loops, a  homomorphism can collapse an edge, otherwise  it cannot. We  suppose  that our graphs are reflexive, i.e.,  have  a loop at every vertex, this simplifies considerably the study.
 
 With these two definitions, the distance from a vertex $x$ to a vertex  $y$ in an undirected  graph $G$ is the least integer $n$  such that there an homomorphism from a path $L$ of length $n$  into $G$ and sending an extremity of $L$ on $x$ and the other on $y$, otherwise it is infinite.  
 
 This definition extends to  directed graphs as follows.

\subsection{Zigzags}
 
  A {\emph{reflexive zigzag}}\;  is a reflexive graph $Z$ such that the symmetric hull is a path.

\begin{figure}[h]
\centering
\includegraphics[width=0.3\textwidth]{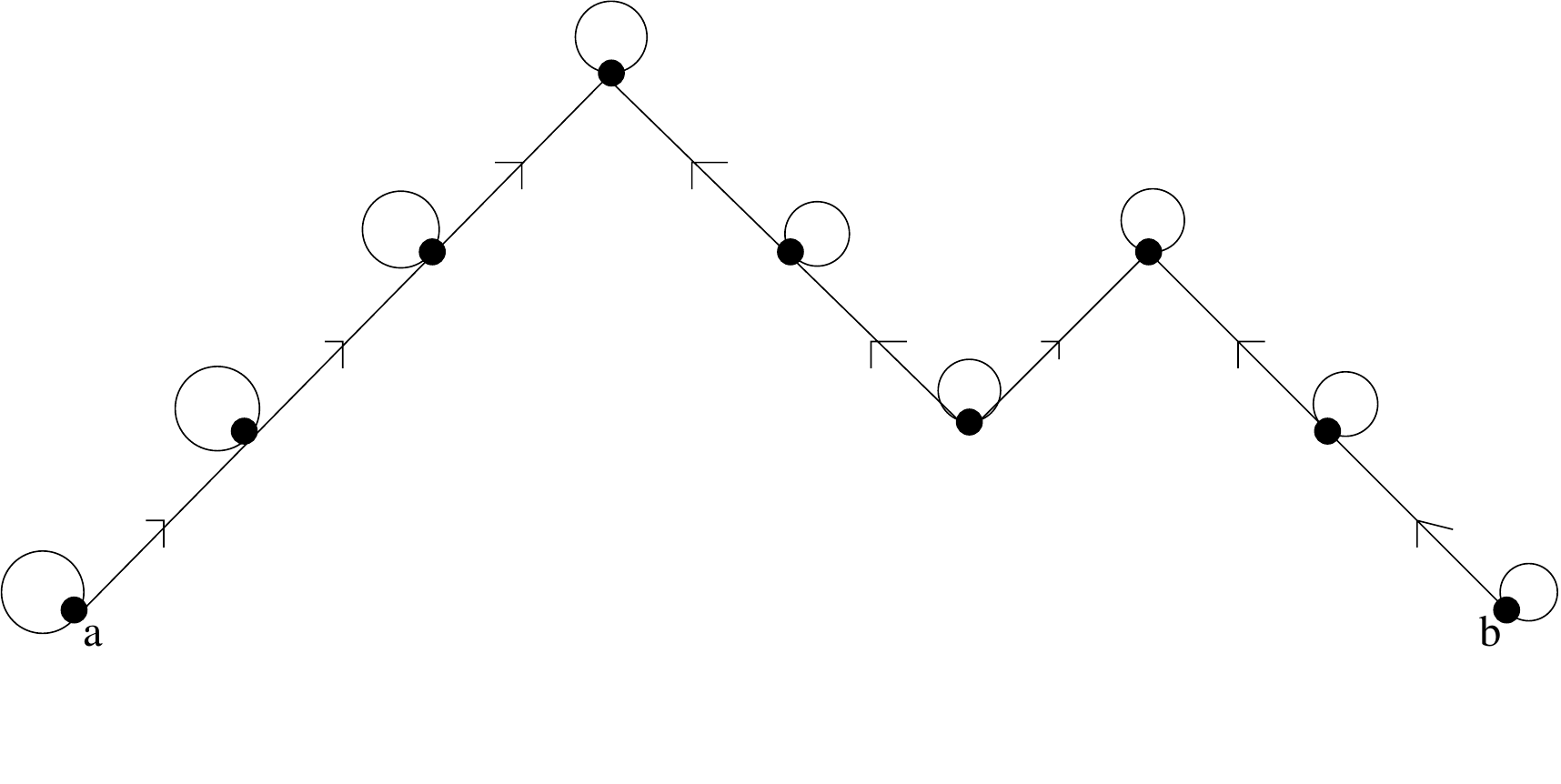}
\caption{A reflexive oriented zigzag}
\label{orientzigzag}
\end{figure}

\begin{figure}[h]
\centering
\includegraphics[width=0.3\textwidth]{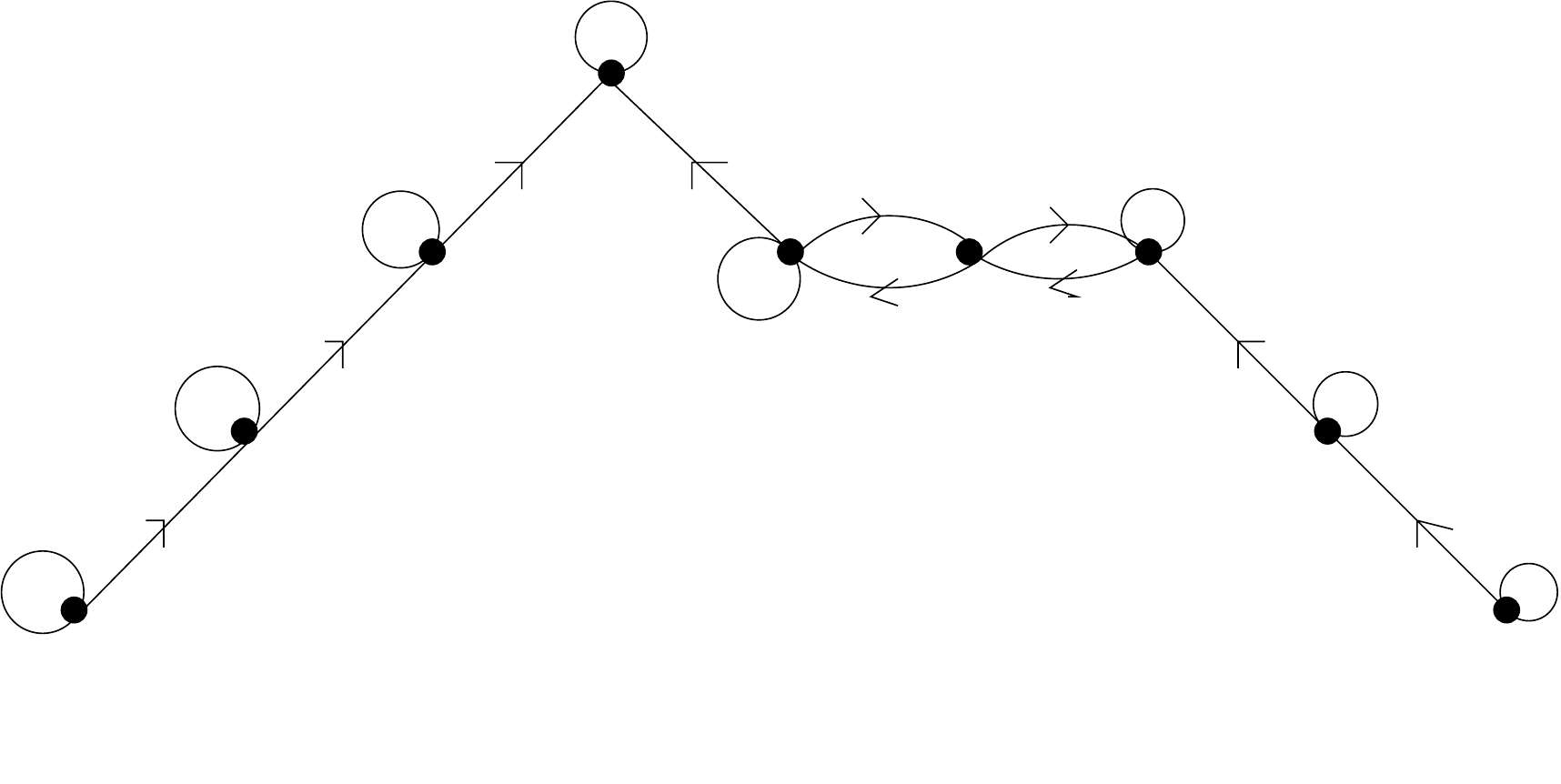}
\caption{A reflexive directed zigzag}
\label{directzigzag}
\end{figure}

 If $x$ and $y$ are two vertices of a graph $G$, we may look at all the zigzags $Z$ which can be mapped into $G$ by a homomorphism sending the extremities  of $Z$ on $x$ and $y$ respectively.   
 
  \begin{figure}[h]
\centering
\includegraphics[width=0.8\textwidth]{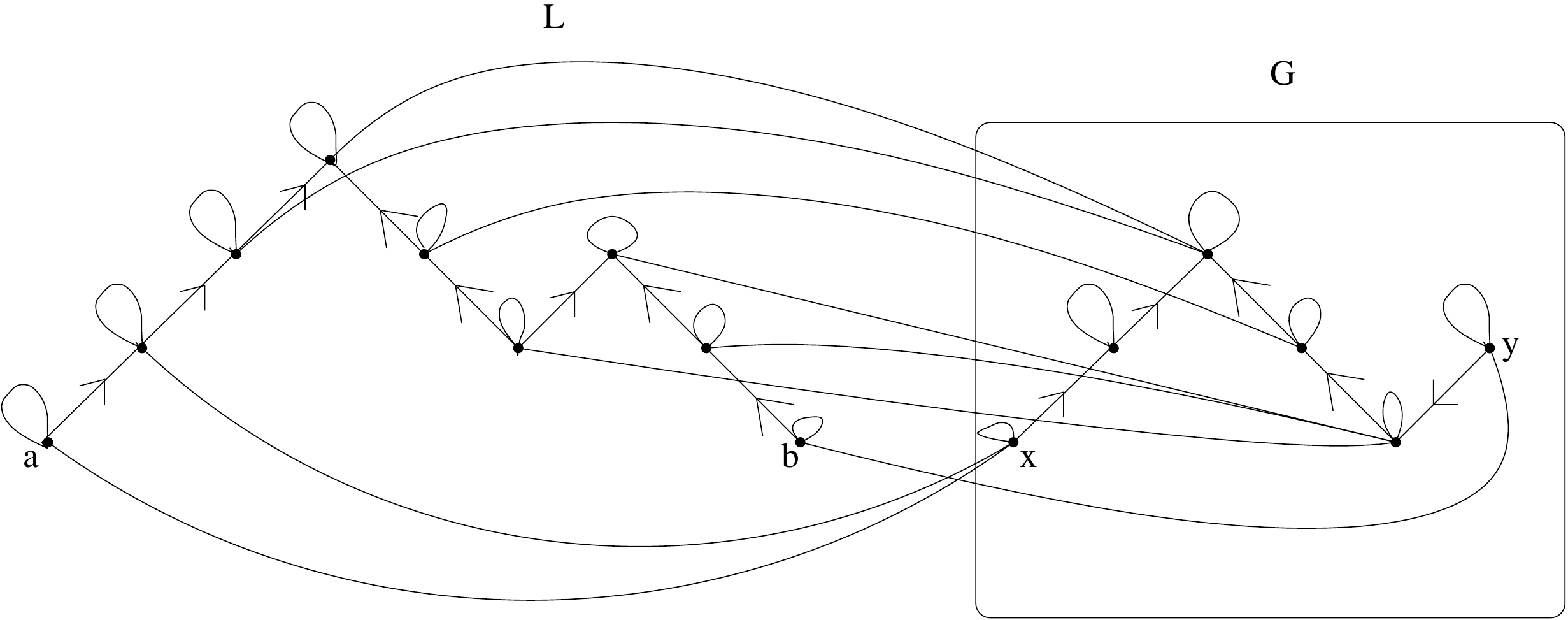}
\caption{A morphism of an oriented zigzag into a directed graph}
\label{figure-B-P-S-2019}
\end{figure}

  In order to simplify, we consider only reflexive oriented zigzag. 
  
  If  $L$ is a reflexive oriented zigzag, we may 
 enumerate the vertices in a non-repeating sequence $v_0:= x, \dots, v_i, \dots,  v_{n}:= y$ 
 and to this enumeration we may  associate the word $ev(L):= \alpha_0\cdots \alpha_i \cdots\alpha_{n-1}$ over the alphabet $\Lambda:= \{+,-\}$, where $\alpha_i:= +$ if $(v_i,v_{i+1})$ is an edge and $\alpha_i:= -$ if $(v_{i+1},v_{i})$ is an edge. If the path has just one vertex, the word will be the empy set and will be denoted by $\Box$.  Conversely, to a  word $u:= \alpha_0\cdots \alpha_i \cdots\alpha_{n-1}$ we may associate the reflexive zigzag $L_u:= (\{0, \dots n\}, \delta_u)$ with extremities $0$ and $n$ (where $n$ is the length $\ell(u)$ of $u$) such that $\delta_u(i,i+1):=u_i $ for $i<n$. 
\subsection{The zigzag distance}
  
 Let $G:= (V, \mathcal E)$ be a reflexive directed graph. For each pair $(x,y)\in V\times V$, the \emph{\emph{zigzag distance}}\;  from $x$ to $y$ is the set $d_G(x,y)$ of words $u$ such that there is graph homomorphism $h$ from $L_u$ into $G$ which sends $0$ on $x$ and $\ell(u)$ on $y$.
 
 This notion is due to Quilliot (Quilliot considered reflexive directed graphs, not necessarily oriented, and in defining the distance, considered only  oriented paths). A general study is presented in Jawhari-Misane-Pouzet 1986 \cite{jawhari-al} ; some developments appear in Pouzet-Rosenberg  1994 \cite{pouzet-rosenberg}  and in Kabil-Pouzet 1998 \cite{kabil-pouzet}.

Let us see that this   map has the properties of a distance.

\subsection{The set of values}
 The set $d_G(x,y)$ is a subset of 
$\Lambda^*$,   the set  of words over the alphabet  $\Lambda:= \{+,-\}$. Because of the reflexivity of $G$, every word obtained from a word belonging to $d_G(x,y)$ by inserting letters  will be also into $d_G(x,y)$. That is, $d_G(x,y)\in \mathbf F(\Lambda^*)$,  the set of final segments of $\Lambda^*$ ordered by the subword ordering.  To see that the map $d_G: V\times V\rightarrow  \mathbf F(\Lambda^*)$ has properties similar to a distance, let us put in evidence some properties of $\mathbf F(\Lambda^*)$.

Extend the involution on $\Lambda$ exchanging  $+$ and $-$ to $\Lambda^*$ by setting $\overline \Box:= \Box$ and $\overline {u_0\cdots u_{n-1}}:= \overline{u_{n-1}}\cdots \overline{u_{0}}$ for every word in $\Lambda^*$. Next,  set $\overline X:= \{\overline u: u\in X\}$ for any set $X$ of words and note that  $\overline X$ belongs to  $\mathbf F(\Lambda^*)$, whenever $X$ belongs to it. 
Order $\mathbf F(\Lambda^*)$  by reverse of the inclusion, denote by   $0$ its least element (that is $\Lambda^*$), set $X\oplus Y$ for $X\cdot Y:= \{uv: u\in X, v\in Y\}$. 

\subsection{The distance}

The map $d_G$ satisfies the following properties:

 \begin{enumerate}[(i)]
 
 \item $d_G(x,y)  = 0$ iff $x=y$;
 
 \item $d_G(x,y) \leq d_G(x,z)\oplus d_G(z, y)$;
 
 \item $\overline{d_G(y,x)}=d_G(x,y)$.
 
 \end{enumerate}

Several categorical properties of (reflexive) graphs and their homomorphisms depend upon some properties of  $\mathbf F(\Lambda^*)$. 

A crucial one is that this set is an ordered monoid equipped with an involution and satisfying the following  distibutivity condition:

\begin{equation}\label{distribinfty}
\bigcup  _{\alpha \in A, \beta \in B} U_\alpha \oplus  V_\beta =
\bigcup _{\alpha \in A} U_\alpha  \oplus \bigcup _{\beta \in B} V_\beta
\end{equation}
for all $U_\alpha \in \mathbf F(\Lambda^*)$   $(\alpha \in A)$ and
$V_{\beta} \in \mathbf F(\Lambda^*)$   $(\beta \in B)$.

As this will be shown below,  it turns out from this  distributivity condition that the distance set $\mathbf F(\Lambda^*)$ can be equipped with  a graph structure, say $G_{\mathbf F(\lambda^*)}$;     the zigzag distance $d_{\mathbf F(\Lambda^*)}$ associated with this graph is such that every metric space over $\mathbf F(\Lambda^*)$ can be embedded isometrically  into a power of the space $(\mathbf F(\Lambda^*), d_{\mathbf F(\Lambda^*)})$, furthermore, every (directed) graph can be isometrically embedded into a power of $G_{\mathbf F(\lambda^*)}$. 

These facts hold for generalized metric spaces over any ordered monoid satisfying this distributivity condition.  In 
 the next section,  we present  shortly a general frame  and review the most salient facts. For the proofs see \cite {jawhari-al} and, for more details, see the forthcoming survey \cite{kabil-pouzet2}.

\section{Generalized metric space}\label{section:generalized metric spaces}
 Generalizations of the notion of a metric space  are  as old as the notion of ordinary metric space  and arise  from geometry and logic, as well as probability (see \cite{blumenthal}, \cite{blumenthal1} \cite{blumenthal-menger}, \cite{lawvere}). The generalization we consider here, originating  in  Jawhari et al \cite{jawhari-al},  is one among several. The basic object,  called a Heyting algebra,  is an  ordered monoid equipped with an involution satisfying a distributivity condition. It was pointed out recently to the third author that the study of these Heyting algebras goes back to the late 1930's and the work of M. Ward and R. P. Dilworth (1939) \cite{ward-dilworth} and also that a more appropriated term  would have been   a \emph{dual integrale involutive quantale} (in short a $D^2I$-\emph{quantale}) see \cite{kaarli-radeleczki}. The term quantale was introduced in $1984$ by C. J. Mulvey \cite{mulvey} as a combination of "quantum logic'' and "locale'', see Rosenthal \cite {rosenthal} and the recent book of Eklund et al \cite{Eklund-al}) about quantales.

\subsection{The objects}
Let $\mathcal H$ be an ordered monoid equipped with an involution. We denote by $\oplus$ the monoid operation, by $0$ its neutral element  and by $^{-}$ the involution, so that $\overline {p+q}= \overline{q} \oplus \overline{q}$ for all $p,q\in \mathcal H$. 

 From now on, we suppose that the neutral element of the monoid $\mathcal H$ is the least element of $\mathcal H$ for the ordering.

Following
Pouzet-Rosenberg, 1994 \cite{pouzet-rosenberg}, we say that  a set $E$ equipped with a map $d$ from $E\times E$ into $\mathcal H$ and which satisfies   properties $(i), (ii), (iii)$     stated below  is a $\mathcal H$-\emph{distance}, and the pair $\bold E:= (E,d)$   is a  $\mathcal H$-\emph{metric space}. 

\begin{enumerate}[(i)]
\item $d(x,y)  = 0$ iff $x=y$;
\item $d(x,y) \leq d(x,z)\oplus d(z, y)$;
\item $\overline{d(y,x)}=d(x,y)$.
\end{enumerate}

In  the Encyclopedia of distances  \cite{deza-deza} (cf. p.82) the corresponding $\mathcal H$-metric spaces are called   \emph{generalized distance spaces}\;  and the maps $d$  are called  \emph{generalized metrics}.

\subsection{The morphisms}

If  $\mathbf E:=(E, d)$ is a $\mathcal H$-metric space and $A$ a subset of $E$, the restriction of $d$ to $A\times A$, denoted by $d_{\restriction A}$,  is a $\mathcal H$-distance
and $\mathbf A:= (A, d_{\restriction A})$ is a \emph{{restriction}}\; of $\mathbf E$. As in the case of ordinary metric spaces, if  $\mathbf E:=(E,d)$ and $\mathbf E':=( E', d')$ are two
$\mathcal H-$metric spaces,  a map $f:E\longrightarrow E^{\prime }$ is a
\emph{non-expansive map}\;  (or a \emph{contraction}) from $\mathbf E$ to $\mathbf E'$ provided that $d^{\prime }(f(x),f(y))\leq d(x,y)$ holds for all $%
x,y\in E$. The map $f$ is an \emph{{isometry}}\;  if $d^{\prime
}(f(x),f(y))=d(x,y)$ for all $x,y\in E$.

\subsection{Retracts and fix-point property}
The space $\mathbf E:=(E,d)$ is a \emph{{retract}}\;  of $\mathbf E':=(E',d')$, if there are two non-expansive maps $f:
\mathbf E\longrightarrow \mathbf E'$ and $g:\mathbf E'
\longrightarrow \mathbf E$ such that $g\circ f=id_{\mathbf E}$ (where $id_{\mathbf E}$ is
the identity map on $\mathbf E$). In this case,
$f$ is a  \emph{{coretraction}}\;  and $g$ a \emph{retraction}. If $\mathbf E$ is a subspace of $\mathbf E'$, then clearly $\mathbf E$ is a retract of
$\mathbf E'$ if there is a non-expansive map from $\mathbf E'$ to $\mathbf E$ such $%
g(x)=x $ for all $x\in E$. We can easily see that every coretraction is
an isometry. 
A generalized metric space $\mathbf E$ is an \emph{absolute retract} w.r.t. isometries   if it is a
retract of every isometric extension. This notion is related to the fixed-point property.
\begin{claim}
If a metric space  has the fixed-point property (f.p.p.), i.e.,  
every non-expansive self map has a fixed point, then every retract has it. 
\end{claim}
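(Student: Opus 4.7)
The plan is to show that if $\mathbf{E}'$ has the fixed-point property and $\mathbf{E}$ is a retract of $\mathbf{E}'$ via a coretraction $f:\mathbf{E}\to\mathbf{E}'$ and retraction $g:\mathbf{E}'\to\mathbf{E}$ with $g\circ f=\mathrm{id}_{\mathbf{E}}$, then any non-expansive self-map $h:\mathbf{E}\to\mathbf{E}$ has a fixed point. The natural candidate map on the bigger space is the conjugate $f\circ h\circ g:\mathbf{E}'\to\mathbf{E}'$, which is non-expansive as a composition of non-expansive maps.

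The key step is then to apply the hypothesis to $f\circ h\circ g$: this yields a point $y\in E'$ with $f(h(g(y)))=y$. Setting $x:=g(y)\in E$ and applying $g$ to both sides of this equation gives
\[
g(y) \;=\; g\bigl(f(h(g(y)))\bigr) \;=\; (g\circ f)(h(g(y))) \;=\; h(g(y)),
\]
using $g\circ f=\mathrm{id}_{\mathbf{E}}$ at the last step. Hence $h(x)=x$, so $h$ has a fixed point and $\mathbf{E}$ has the f.p.p.

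There is really no obstacle here: the argument is the standard transfer of a fixed point through a retraction, and it only uses that compositions of non-expansive maps are non-expansive together with the retraction identity $g\circ f=\mathrm{id}_{\mathbf{E}}$. Nothing specific to the $\mathcal{H}$-valued setting intervenes beyond these formal properties, so the proof is essentially a two-line diagram chase.
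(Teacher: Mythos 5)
Your proof is correct and is exactly the standard conjugation argument ($f\circ h\circ g$ inherits non-expansiveness, its fixed point pushes down through $g\circ f=\mathrm{id}_{\mathbf E}$); the paper states this claim without proof, and your two-line diagram chase is precisely the argument it implicitly relies on.
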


Consequence: if there are plently of metric spaces with the f.p.p. then absolute retracts are good candidates.

\subsection{Injectivity, extension property, hyperconvexity} We introduce three other notions.  
Let $\mathbf E$ be a  metric space. This space  is said to be \emph{injective}   if for all spaces $\mathbf E'$ and $\mathbf E''$,
each non-expansive map $f : \mathbf E' \to \mathbf E$, and every isometry
$g : \mathbf E' \to \mathbf E''$ there is a non-expansive map $ h : \mathbf E'' \to \mathbf E$ such that
$ h\circ g =f$.\\ 
 The metric space $\mathbf E$ has the \emph{extension property}\;  if for every space $\mathbf E'$, every non-expansive map  $f$ from a subset $A$ of $\mathbf E'$ into $\mathbf E$ extends to a non-expansive map from $\mathbf E'$ to $\mathbf E$. With the help of Zorn's lemma, this amounts to the fact that every non-expansive map defined on  a subset $A$ of $E'$ extends to every $x\in E'\setminus A$ to a non-expansive map from the subspace of $\mathbf E'$  induced on $A\cup\{x\}$ to $\mathbf E$. \\
 Let us say that a \emph{closed ball} in $\mathbf E$  is any subset  of $E$ of the form $B(x, r):= \{y\in E: d(x,y)\leq r\}$. We say that the space $\mathbf E$ is \emph{hyperconvex}\;  if the
intersection of every family
of closed balls $\left(B (x_i,r_i)\right)_{i\in I}$ is non-empty whenever
$d(x_i,y_i) \leq r_i \oplus \overline {r_j}$ for all $i, j \in I$.

\subsection{How these notions relate?}

\begin{claim} For a  generalized metric space,  hyperconvexity implies  extension  property; extension property is  equivalent to injectivity, and  injectivity imply  that the space is an absolute retract.  \end{claim}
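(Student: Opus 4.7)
\bigskip

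\noindent\textbf{Proof plan.}

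\medskip

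My strategy is to prove the three implications in the natural order: hyperconvexity $\Rightarrow$ extension property, extension $\Leftrightarrow$ injectivity, injectivity $\Rightarrow$ absolute retract. The order makes sense because the later implications are formal, while the first one is the step that really uses the geometry of $\mathcal H$.

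For hyperconvexity $\Rightarrow$ extension property, I would rely on the reduction recorded after the definition: it suffices, given a non-expansive $f:A\to \mathbf E$ with $A\subseteq E'$ and a point $x\in E'\setminus A$, to find some $y\in E$ that can serve as $f(x)$. The requirement $d(y,f(a))\leq d'(x,a)$ says (after applying the involution, which reverses a $\mathcal H$-distance and preserves the order) that $y$ must lie in $B(f(a),r_a)$ with $r_a:=\overline{d'(x,a)}$. By hyperconvexity, the intersection $\bigcap_{a\in A}B(f(a),r_a)$ is non-empty provided $d(f(a),f(b))\leq r_a\oplus\overline{r_b}$ for all $a,b\in A$; but $r_a\oplus\overline{r_b}=d'(a,x)\oplus d'(x,b)$, so this is just the triangle inequality in $\mathbf E'$ followed by the non-expansivity of $f$:
\[
d(f(a),f(b))\leq d'(a,b)\leq d'(a,x)\oplus d'(x,b).
\]
Any $y$ in the intersection then extends $f$ to $A\cup\{x\}$, and Zorn's lemma finishes the job.

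The equivalence of extension property and injectivity is purely formal. For extension $\Rightarrow$ injectivity, given a non-expansive $f:\mathbf E'\to\mathbf E$ and an isometry $g:\mathbf E'\to\mathbf E''$, I would transport $f$ along $g$ to a non-expansive map $f':g(E')\to\mathbf E$, use extension to produce $h:\mathbf E''\to\mathbf E$ non-expansive with $h_{\restriction g(E')}=f'$, and observe $h\circ g=f$. For injectivity $\Rightarrow$ extension, given $f:A\to\mathbf E$ non-expansive with $A\subseteq\mathbf E'$, apply injectivity to $f$ itself and the isometric inclusion $g:\mathbf A\hookrightarrow\mathbf E'$ to produce the required extension $h:\mathbf E'\to\mathbf E$. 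Finally, for injectivity $\Rightarrow$ absolute retract, I would simply apply injectivity to the identity $\mathrm{id}_{\mathbf E}:\mathbf E\to\mathbf E$ together with a given isometric embedding $g:\mathbf E\hookrightarrow\mathbf E'$; the resulting non-expansive $h:\mathbf E'\to\mathbf E$ with $h\circ g=\mathrm{id}_{\mathbf E}$ is precisely a retraction.

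The only genuinely non-formal step is the hyperconvexity one, and its subtlety is the careful bookkeeping of the involution: the condition in the definition of hyperconvexity is $d(x_i,x_j)\leq r_i\oplus\overline{r_j}$, not $r_i\oplus r_j$, so one has to align the indices so that the bar falls on the correct side. Once the substitution $r_a=\overline{d'(x,a)}$ is in place, the distributivity-style triangle inequality in $\mathcal H$ makes the verification immediate. The remaining implications are book-keeping and do not require any properties of $\mathcal H$ beyond its structure as an ordered involutive monoid.
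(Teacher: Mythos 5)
Your argument is correct and is exactly the standard one: the paper itself states this claim without proof (deferring to Jawhari--Misane--Pouzet), and your three steps --- the one-point extension via the ball family $B(f(a),\overline{d'(x,a)})$ whose compatibility condition reduces to the triangle inequality in $\mathbf E'$ plus non-expansivity, the formal transport along the isometry $g$ for the equivalence with injectivity, and the application of injectivity to $\mathrm{id}_{\mathbf E}$ for the absolute-retract implication --- are precisely how that reference proves it. The only cosmetic quibble is that the inequality $d(f(a),f(b))\leq d'(a,x)\oplus d'(x,b)$ is just the triangle inequality and non-expansivity, not a ``distributivity-style'' fact; no distributivity of $\mathcal H$ is needed anywhere in this claim.
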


\noindent Hyperconvexity is equivalent to the conjunction of the following conditions:\\
1) \emph{Convexity}: for all $x,y \in E$ and $p,q \in   \mathcal H$ such
that $d(x,y) \leq p \oplus q$ there is $z \in E$ such that $d(x,z)\leq p$
and $d(z,y)\leq q$.\\
2)\emph{$2$-Helly property}, also called the
\emph{$2$-ball intersection property}: The intersection of every set
(or, equivalently, every family) of closed balls is non-empty provided
that their pairwise intersections are all non-empty.

\subsection{Heyting algebra alias dual integrale involutive quantale}

\begin{claim}\label{claim:distributivity}
The four notions above are equivalent  provided  that the set $\mathcal H$ of values of the distances is a complete lattice and satisfies   the following distributivity condition:

\begin{equation}\label{distribinfty}
\bigwedge  _{\alpha \in A, \beta \in B} u_\alpha \oplus  v_\beta =
\bigwedge _{\alpha \in A} u_\alpha  \oplus \bigwedge _{\beta \in B} v_\beta
\end{equation}
for all $u_\alpha \in \mathcal H$ $(\alpha \in A)$ and
$v_{\beta} \in \mathcal H$ $(\beta \in B)$.

\end{claim}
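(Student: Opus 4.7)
The plan is to close the cycle of implications. The previous Claim already yields
\[
\text{hyperconvex} \;\Longrightarrow\; \text{extension property} \;\Longleftrightarrow\; \text{injective} \;\Longrightarrow\; \text{absolute retract}
\]
without using completeness or distributivity of $\mathcal H$, so to prove Claim~\ref{claim:distributivity} it suffices to establish the converse arrow, absolute retract $\Longrightarrow$ hyperconvex, using the full hypotheses. The strategy is to construct a canonical hyperconvex space out of $\mathcal H$ itself, show that every $\mathcal H$-metric space embeds isometrically into a power of it, and then conclude that an absolute retract, being a retract of such a hyperconvex ambient, is itself hyperconvex.

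First I would equip $\mathcal H$ with a canonical $\mathcal H$-distance
\[
d_{\mathcal H}(u,v) := \bigwedge \bigl\{r \in \mathcal H : u \leq v \oplus r \text{ and } v \leq u \oplus \overline r\bigr\},
\]
and verify axioms (i)--(iii) from the ordered monoid structure and the involution. Then, for any $\mathcal H$-metric space $\mathbf E = (E,d)$, the maps $f_a\colon x \mapsto d(a,x)$, one for each $a\in E$, form a non-expansive family: the triangle inequality $d(a,x)\leq d(a,y)\oplus d(y,x)$ together with its involutive translate yields $d_{\mathcal H}(f_a(x),f_a(y))\leq d(x,y)$, while taking $a = x$ shows that the family is jointly isometric. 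Hence $\mathbf E$ embeds isometrically into the power $\mathcal H^E$ carrying the coordinatewise sup-distance.

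The crux is to prove that $\mathcal H$ is itself hyperconvex; this is exactly where the distributivity condition (\ref{distribinfty}) becomes decisive. Given a family of closed balls $\bigl(B(u_i,r_i)\bigr)_{i\in I}$ in $\mathcal H$ with $d_{\mathcal H}(u_i,u_j) \leq r_i \oplus \overline{r_j}$ for all $i,j$, the natural candidate for a common point is $u_* := \bigwedge_i (u_i \oplus r_i)$, and the verification $d_{\mathcal H}(u_*,u_j) \leq r_j$ boils down to the identity
\[
\Bigl(\bigwedge_i (u_i \oplus r_i)\Bigr) \oplus \overline{r_j} \;=\; \bigwedge_i \bigl(u_i \oplus r_i \oplus \overline{r_j}\bigr),
\]
which is a direct instance of (\ref{distribinfty}). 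Convexity of $\mathcal H$ is a parallel and easier verification. Powers of hyperconvex spaces are hyperconvex coordinatewise, so $\mathcal H^E$ is hyperconvex.

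Finally, a retract of a hyperconvex space is hyperconvex: given a non-expansive retraction $g\colon \mathbf E' \to \mathbf E$ and a family of balls in $\mathbf E$ with pairwise compatible radii, one finds a common centre $z\in E'$ by hyperconvexity of $\mathbf E'$ and then applies $g$, which preserves the radius inequalities since it is non-expansive and fixes the centres lying in $E$. Thus any absolute retract embeds isometrically into the hyperconvex space $\mathcal H^E$, is a retract of it, and is therefore itself hyperconvex, closing the cycle. The main obstacle I anticipate is the careful bookkeeping around $d_{\mathcal H}$ — in particular choosing the symmetrised residual so that axiom (iii) holds, and then pulling infinite meets through $\oplus$ on both sides — because without (\ref{distribinfty}) the candidate $u_*$ certifies only finite ball-intersection, and it is precisely this infinitary distributivity that upgrades finite hyperconvexity to the full hyperconvexity needed for the general extension property.
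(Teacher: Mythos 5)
Your proposal is correct and follows essentially the same route as the paper: the forward implications come from the preceding claim, and the converse is closed exactly as the paper indicates, by metrizing $\mathcal H$ via the residuated distance $d_{\mathcal H}$, using the distributivity condition to show $(\mathcal H,d_{\mathcal H})$ is hyperconvex and that every space embeds isometrically into a power of it (Proposition~\ref{prop-embedding}), so that an absolute retract is a retract of a hyperconvex space and hence hyperconvex. The only caveat is the side/involution bookkeeping in $D(p,q)$ that you yourself flag, which is a matter of convention rather than substance.
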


In this case,  we say  that $\mathcal H$ is an  \emph{involutive Heyting algebra}. 
The proof of Claim \ref{claim:distributivity} relies on Proposition \ref{prop-embedding} below.

\subsection{Metrisation of the set of values}

On an involutive  Heyting algebra $\mathcal H$, we may define a $\mathcal H$-distance. This is the most important fact about generalized metric spaces. It relies on the classical notion of residuation. Let $v\in \mathcal H$. Given $\beta \in \mathcal H$, the sets
 $\{r \in \mathcal H: v \leq r \oplus \beta\}$ and
$\{r \in \mathcal H: v \leq  \beta \oplus r \}$ have least elements, that we
denote
respectively  by $\lceil v\oplus -\beta\rceil$ and $\lceil -\beta\oplus  v  \rceil$ and call the \emph{right} and \emph{left  quotient} of $v$ by $\beta$ (note that  $\overline {\lceil -\beta\oplus v \rceil} =\lceil \bar v\oplus {-{\bar\beta}} \rceil$). It follows that for all
$p, q \in \mathcal H$, the set:

$$D(p,q):=\{r \in \mathcal H : p\leq q \oplus \bar r\;\;\text{and}\; \; q\leq
p\oplus r\} $$
has a least
element, namely $\lceil \bar p\oplus -\bar q \rceil \vee
\lceil -p\oplus q \rceil$, that we denote by $d_\mathcal H(p,q)$.

\begin{lemma} \cite{jawhari-al}
If $\mathcal H$ is a   Heyting algebra then for every every metric space $(E,d)$ over $\mathcal H$, and for all $x,y \in E$,
the following equality holds:
$$d(x,y) =  \bigvee_{z\in E} d_{\mathcal H}\left(d(z,x),d(z,y)\right).$$ 
\end{lemma}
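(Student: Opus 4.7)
The plan is to establish both inequalities in the claimed identity separately. For $d(x,y) \geq \bigvee_{z\in E} d_{\mathcal H}(d(z,x),d(z,y))$, I would fix $z\in E$ and show that $r:=d(x,y)$ itself lies in the set $D(d(z,x), d(z,y))$, which by the defining minimality of $d_{\mathcal H}$ forces $d_{\mathcal H}(d(z,x),d(z,y))\leq d(x,y)$. The key observation is that the triangle inequality (ii), applied in both orientations, together with the involution axiom (iii) that sends $d(y,x)$ to $\overline{d(x,y)}$, yields precisely the two conditions defining $D(p,q)$:
\[
d(z,y)\leq d(z,x)\oplus d(x,y),\qquad d(z,x)\leq d(z,y)\oplus \overline{d(x,y)}.
\]
Taking the join over $z\in E$ would then complete this half.

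For the reverse inequality $d(x,y)\leq \bigvee_{z\in E} d_{\mathcal H}(d(z,x),d(z,y))$, I would simply evaluate the right-hand side at $z=x$. By (i), $d(x,x)=0$, so the task reduces to identifying $d_{\mathcal H}(0,d(x,y))$. By definition, this is the least $r\in\mathcal H$ satisfying $0\leq d(x,y)\oplus\bar r$ and $d(x,y)\leq 0\oplus r$. The first condition is automatic since $0$ is the least element of $\mathcal H$, and the second reduces to $d(x,y)\leq r$ since $0$ is neutral; consequently the minimum is attained at $r=d(x,y)$ itself. Thus the single term indexed by $z=x$ already equals $d(x,y)$, so the join is at least $d(x,y)$.

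I do not anticipate any genuine obstacle: the statement is essentially an unpacking of the definition of $d_{\mathcal H}(p,q)$ as the minimum of $D(p,q)$, combined with the coincidence that the neutral element is also the least element of $\mathcal H$. The Heyting-algebra hypothesis is used only implicitly, to guarantee that the left and right residuals $\lceil \bar p\oplus -\bar q\rceil$ and $\lceil -p\oplus q\rceil$ exist, and hence that $D(p,q)$ has a least element and $d_{\mathcal H}$ is a well-defined $\mathcal H$-valued map; the full infinitary distributivity condition does not itself surface in the inequality manipulations above. The only subtlety worth flagging is that one must invoke axiom (iii) to turn $d(y,x)$ into $\overline{d(x,y)}$ so that the second triangle inequality matches the $q\oplus\bar r$ pattern in the definition of $D(p,q)$; otherwise the calculation is immediate.
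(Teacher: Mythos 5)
Your proof is correct: the upper bound follows because the two triangle inequalities (with axiom (iii) converting $d(y,x)$ into $\overline{d(x,y)}$) show $d(x,y)\in D\left(d(z,x),d(z,y)\right)$ for every $z$, and the lower bound follows from the computation $d_{\mathcal H}(0,q)=q$ at $z=x$. The paper itself gives no proof of this lemma (it defers to \cite{jawhari-al}), and your argument is exactly the standard unpacking of the definition of $d_{\mathcal H}$ as the least element of $D(p,q)$ that the cited source uses, so there is nothing of substance to contrast.
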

Let $\left( (E_{i}, d_{i})\right) _{i\in I}$ be a family of $\mathcal H$-metric spaces. The \emph{direct product}  $\underset{i\in I}{%
\prod }\left( E_{i}, d_{i}\right) $, is the metric space $(E,d) $ where $E$ is the cartesian product  $%
\underset{i\in I}{\prod }E_{i}$ and $d$ is the  ''sup'' (or
$\ell ^{\infty }$) distance  defined
by $d\left(
\left( x_{i}\right) _{i\in I},\left( y_{i}\right) _{i\in I}\right) =%
\underset{i\in I}{\bigvee }d_{i}(x_{i},$ $y_{i})$.
We recall the following result. 

\begin{proposition} \label{prop-embedding}(Proposition II.2-7 of \cite{jawhari-al})The map $(p,q) \mapsto d_{\mathcal H}(p,q)$ over a Heyting algebra $\mathcal H$ is a distance
on $\mathcal H$, in fact $(\mathcal H,d_{\mathcal H})$ is an hyperconvex metric
space and every metric space over $\mathcal H$ embeds isometrically
into a power of  $(\mathcal H,d_\mathcal H)$. 
\end{proposition}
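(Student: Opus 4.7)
The plan is four-fold: (1) set up the residuation machinery on $\mathcal H$ using the infinitary distributivity condition~\eqref{distribinfty}; (2) check the three distance axioms for $d_{\mathcal H}$; (3) establish hyperconvexity of $(\mathcal H,d_{\mathcal H})$ by exhibiting an explicit meeting point of any compatible family of closed balls; and (4) deduce the embedding statement directly from the Lemma just above.

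For (1) and (2), first note that~\eqref{distribinfty} forces $\oplus$ to be monotone in each variable (apply it to two-element families) and shows that, for every $v,\beta\in\mathcal H$, the set $\{r:v\le\beta\oplus r\}$ is non-empty (it contains the top) and closed under arbitrary infima; hence the left quotient $\lceil -\beta\oplus v\rceil$ exists, and dually the right one. The set $D(p,q)$ is then the intersection of two principal filters, with least element $\lceil\bar p\oplus -\bar q\rceil\vee\lceil -p\oplus q\rceil$, which by definition is $d_{\mathcal H}(p,q)$. Axiom~(i) is immediate ($0\in D(p,p)$, and $0\in D(p,q)$ forces $p=q$); axiom~(iii) follows because the involution is order-preserving (a consequence of $\overline{a\oplus b}=\bar b\oplus\bar a$ together with~\eqref{distribinfty}, which jointly force $\bar{\cdot}$ to carry infima to infima) and maps $D(q,p)$ bijectively onto $D(p,q)$. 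For the triangle inequality I would simply chain the defining inequalities witnessing $d_{\mathcal H}(p,r)\in D(p,r)$ and $d_{\mathcal H}(r,q)\in D(r,q)$ to conclude that $d_{\mathcal H}(p,r)\oplus d_{\mathcal H}(r,q)\in D(p,q)$, whence it dominates $d_{\mathcal H}(p,q)$.

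For step~(3), given a family $(B(x_i,r_i))_{i\in I}$ with $d_{\mathcal H}(x_i,x_j)\le r_i\oplus\bar r_j$ for all $i,j$, the natural candidate for a common point is
\[
y\ :=\ \bigwedge_{i\in I}(x_i\oplus r_i).
\]
The inequality $y\le x_j\oplus r_j$ is automatic, so one half of $r_j\in D(x_j,y)$ is free; for the other half, distributivity gives $y\oplus\bar r_j=\bigwedge_{i}(x_i\oplus r_i\oplus\bar r_j)$, and for each $i$, since $d_{\mathcal H}(x_i,x_j)\in D(x_i,x_j)$ and $d_{\mathcal H}(x_i,x_j)\le r_i\oplus\bar r_j$, monotonicity of $\oplus$ yields $x_j\le x_i\oplus d_{\mathcal H}(x_i,x_j)\le x_i\oplus r_i\oplus\bar r_j$. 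Taking the meet over $i$ produces $x_j\le y\oplus\bar r_j$, so $d_{\mathcal H}(x_j,y)\le r_j$ and $y$ lies in every ball.

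For step~(4), I would consider the map $\varphi\colon E\to\mathcal H^{E}$ given by $\varphi(x):=(d(z,x))_{z\in E}$; under the sup-distance on the product,
\[
d^{\infty}(\varphi(x),\varphi(y))\ =\ \bigvee_{z\in E}d_{\mathcal H}(d(z,x),d(z,y))\ =\ d(x,y),
\]
the second equality being precisely the Lemma just stated; hence $\varphi$ is an isometric embedding into a power of $(\mathcal H,d_{\mathcal H})$. The step I expect to take the most care is the hyperconvexity argument in~(3): the computation is short, but one has to hit on the candidate $y=\bigwedge_i(x_i\oplus r_i)$ and to recognise that~\eqref{distribinfty} is doing double duty, both underwriting the existence of the residuals that define $d_{\mathcal H}$ and licensing the rewrite $y\oplus\bar r_j=\bigwedge_i(x_i\oplus r_i\oplus\bar r_j)$. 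After this, everything else is bookkeeping once the residual calculus is in hand.
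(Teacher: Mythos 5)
The paper gives no proof of this proposition — it is quoted from Proposition II.2-7 of \cite{jawhari-al} — and your argument is a correct reconstruction of that standard proof: residuation obtained from the infinite distributivity law, verification of the three distance axioms, the explicit witness $y=\bigwedge_{i}(x_i\oplus r_i)$ for hyperconvexity, and the evaluation map $x\mapsto (d(z,x))_{z\in E}$ combined with the preceding Lemma for the isometric embedding. The one quibble is that order-preservation of the involution is part of the definition of an involutive ordered monoid rather than something derivable from $\overline{a\oplus b}=\overline{b}\oplus\overline{a}$ and distributivity; with that read as a hypothesis instead of a claim, everything is sound.
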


Hyperconvex spaces enjoy the extension property, hence they are injective. Since  hyperconvexity is preserved under the formation of products, Proposition \ref{prop-embedding} ensures that every  metric space embeds isometrically into an injective object. This fact is shortly expressed by saying  that the category of metric spaces over $\mathcal H$ has \emph{{enough injectives}}. From that follows 
 an  important structural property of the category of metric spaces over a Heyting algebra.

\begin{proposition} (Theorem 1, section II-2.9 of \cite{jawhari-al}) \label{equivalenceAR} In the category of metric spaces over  a Heyting algebra $\mathcal H$, injective, absolute retracts,  hyperconvex spaces, spaces with the extension property and retracts of powers of $({\mathcal H},d_{\mathcal H})$ coincide.\end{proposition}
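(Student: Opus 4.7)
The plan is to close a cycle of implications that completes what Claim~\ref{claim:distributivity} already asserts. The preceding claim gives
\[
\text{hyperconvex} \;\Rightarrow\; \text{extension property} \;\Leftrightarrow\; \text{injective} \;\Rightarrow\; \text{absolute retract},
\]
so it suffices to establish two further arrows, namely that every absolute retract is a retract of some power of $(\mathcal H,d_\mathcal H)$, and that any such retract is hyperconvex.

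First, ``absolute retract $\Rightarrow$ retract of a power of $(\mathcal H,d_\mathcal H)$'' is immediate from Proposition~\ref{prop-embedding}: that result furnishes an isometric embedding $j\colon \mathbf E \hookrightarrow (\mathcal H,d_\mathcal H)^I$ for a suitable index set $I$. If $\mathbf E$ is an absolute retract with respect to isometries, then applied to this particular isometry, there is a non-expansive retraction $g\colon (\mathcal H,d_\mathcal H)^I \to \mathbf E$ with $g\circ j = \mathrm{id}_{\mathbf E}$, exhibiting $\mathbf E$ as a retract of the power.

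Second, ``retract of a power of $(\mathcal H,d_\mathcal H)$ $\Rightarrow$ hyperconvex'' decomposes into three ingredients. (a) $(\mathcal H,d_\mathcal H)$ itself is hyperconvex, by Proposition~\ref{prop-embedding}. (b) Hyperconvexity is preserved by arbitrary products: given a family of balls in $\prod_{i\in I}(E_i,d_i)$ satisfying the pairwise compatibility condition, the sup-distance forces the same condition in each coordinate, hyperconvexity of $(E_i,d_i)$ supplies a common point $z_i$ there, and the tuple $(z_i)_{i\in I}$ lies in the intersection. (c) Hyperconvexity passes to retracts: if $f\colon\mathbf E\to\mathbf E'$ is a coretraction (hence an isometry) with retraction $g\colon\mathbf E'\to\mathbf E$, and $(B(x_i,r_i))_{i\in I}$ is a compatible family of balls in $\mathbf E$, then $(B(f(x_i),r_i))_{i\in I}$ is a compatible family in $\mathbf E'$; choosing $z'\in\bigcap_i B(f(x_i),r_i)$ by hyperconvexity of $\mathbf E'$ and setting $z:=g(z')$, non-expansiveness of $g$ gives
\[
d(z,x_i) \;=\; d\bigl(g(z'),\,g(f(x_i))\bigr) \;\leq\; d'(z',f(x_i)) \;\leq\; r_i,
\]
so $z\in\bigcap_i B(x_i,r_i)$. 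Combining (a)--(c), any retract of any power of $(\mathcal H,d_\mathcal H)$ is hyperconvex.

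The only potential obstacle is careful bookkeeping in step~(b): one must verify that the pairwise condition $d(\mathbf x^k,\mathbf x^l)\leq r_k\oplus \overline{r_l}$ descends to each coordinate under the sup-distance, and conversely that a coordinatewise choice of common points reassembles into a point of the product ball. This is routine once the compatibility inequality is written out. All the real content is carried by Proposition~\ref{prop-embedding}: that $(\mathcal H,d_\mathcal H)$ is hyperconvex and receives every $\mathcal H$-metric space via an isometric embedding. Granted that, the proof of the proposition is essentially a diagram chase around the cycle
\[
\text{hyperconvex} \Rightarrow \text{ext.\ prop.} \Leftrightarrow \text{injective} \Rightarrow \text{abs.\ retract} \Rightarrow \text{retract of }(\mathcal H,d_\mathcal H)^I \Rightarrow \text{hyperconvex}.
\]
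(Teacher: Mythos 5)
Your proof is correct and follows exactly the route the paper indicates: Claim~\ref{claim:distributivity} together with the surrounding remarks (hyperconvexity gives the extension property, hence injectivity, hence absolute retract; hyperconvexity is preserved by products; Proposition~\ref{prop-embedding} supplies the isometric embedding into a power of $(\mathcal H,d_{\mathcal H})$) is precisely the cycle you close. The only nit is that since $d$ need not be symmetric, membership $z\in B(x_i,r_i)$ means $d(x_i,z)\leq r_i$, so in step~(c) you should estimate $d(x_i,z)=d\bigl(g(f(x_i)),g(z')\bigr)\leq d'(f(x_i),z')\leq r_i$ rather than $d(z,x_i)$; the argument is otherwise unchanged.
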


\subsection{Injective envelope}In the category of  metric spaces (over the non-negative reals), every metric space has an injective envelope (also called an injective hull), a  major fact due to Isbell \cite{isbell}. One can view an injective envelope of a metric space $\mathbf E$ as an hyperconvex isometric extension $\mathbf F$ of $\mathbf E$, which is minimal with respect to inclusion (that is, there is no proper hyperconvex subspace of $\mathbf F$ containing isometrically $\mathbf E$).  One  can note that those minimal  extensions are isometric via  the identity on  $\mathbf E$. These facts extend to generalized metric spaces. 
\begin{proposition}\label{prop:injectiveenvelope} (Theorem 2, section II-3.1 of \cite{jawhari-al}). 
Every metric space over a Heyting algebra has an injective envelope. 
\end{proposition}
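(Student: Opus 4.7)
The plan is to adapt Isbell's classical tight-span construction to the Heyting-algebra setting. By Proposition \ref{prop-embedding}, $\mathbf{E}$ embeds isometrically into a power of the hyperconvex space $(\mathcal{H}, d_{\mathcal{H}})$, so injective (equivalently hyperconvex) extensions of $\mathbf{E}$ certainly exist; the task is to single out a minimal one. I would realise this concretely by constructing the tight span. Consider the set
\[
\Delta(\mathbf{E}) := \{f : E \to \mathcal{H} \mid d(x,y) \leq f(x) \oplus \overline{f(y)} \text{ for all } x, y \in E\},
\]
ordered pointwise, and let $T(\mathbf{E}) \subseteq \Delta(\mathbf{E})$ denote the set of minimal elements. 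The distributivity condition (\ref{distribinfty}) guarantees that pointwise infima of chains in $\Delta(\mathbf{E})$ still lie in $\Delta(\mathbf{E})$, so Zorn's lemma provides a minimal element below any given $f \in \Delta(\mathbf{E})$. Equip $T(\mathbf{E})$ with the distance $d_T(f,g) := \bigvee_{x\in E} d_{\mathcal{H}}(f(x), g(x))$, and define $e : E \to T(\mathbf{E})$ by $e(a) := d(\cdot, a)$. The triangle inequality, combined with $\overline{d(y,x)} = d(x,y)$, shows $e(a) \in \Delta(\mathbf{E})$; minimality follows because $f \leq e(a)$ forces $f(a) \leq d(a,a) = 0$ and then the defining inequality at $y=a$ yields $f(x) \geq d(x,a)$. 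The lemma preceding Proposition \ref{prop-embedding} then gives that $e$ is an isometry.

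The next step is to verify that $T(\mathbf{E})$ is hyperconvex. Via Claim \ref{claim:distributivity} this reduces to convexity together with the $2$-Helly property. For convexity between two extremal functions $f, g$ with $d_T(f,g) \leq p \oplus q$, a natural interpolating candidate is
\[
h(x) := (f(x) \oplus p) \wedge (g(x) \oplus \overline{q}),
\]
possibly corrected by the residuation operators $\lceil v \oplus -\beta \rceil$; distributivity (\ref{distribinfty}) is used to check $h \in \Delta(\mathbf{E})$, and an extremal $z \leq h$ provided by Zorn plays the role of the intermediate point. The $2$-Helly property is handled analogously: given extremal functions $f_i$ and values $r_i$ with $d_T(f_i, f_j) \leq r_i \oplus \overline{r_j}$, the pointwise meet $\bigwedge_i (f_i(\cdot) \oplus r_i)$ lies in $\Delta(\mathbf{E})$ by (\ref{distribinfty}), and any extremal function below it is the desired common point of the balls $B(f_i, r_i)$.

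Minimality of the extension $\mathbf{E} \hookrightarrow T(\mathbf{E})$ is then immediate. If $\mathbf{F}$ is a hyperconvex subspace of $T(\mathbf{E})$ containing $e(E)$ and $f \in T(\mathbf{E})$, then by the definition of $\Delta(\mathbf{E})$ the balls $B(e(a), f(a))$ for $a \in E$ pairwise meet in $\mathbf{F}$, so by hyperconvexity of $\mathbf{F}$ they have a common point $g \in F$; this $g$ lies in $\Delta(\mathbf{E})$ with $g \leq f$ pointwise, so $g = f$ by minimality of $f$, whence $f \in F$. Uniqueness up to isometry fixing $\mathbf{E}$ is a standard consequence of the extension property: two minimal hyperconvex extensions admit mutual non-expansive maps over the identity on $E$, whose compositions are non-expansive self-maps fixing $\mathbf{E}$ with hyperconvex image containing $\mathbf{E}$, hence are the identity by minimality.

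The principal obstacle is the verification that $T(\mathbf{E})$ is hyperconvex. In the classical real case this is essentially routine because $\mathbb{R}_{\geq 0}$ is totally ordered and the involution is trivial, whereas in the Heyting-algebra setting the involution and the possible non-commutativity of $\oplus$ force careful bookkeeping with both right and left residuation, together with systematic use of the full distributivity condition (\ref{distribinfty}) to manipulate the pointwise infima that define the interpolating candidates.
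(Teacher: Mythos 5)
The paper does not prove this proposition; it quotes it from Theorem~2, section II-3.1 of \cite{jawhari-al}, where the argument is precisely the adaptation of Isbell's tight span (the space of minimal ``metric forms'') that you propose. Your architecture therefore matches the source: the set $\Delta(\mathbf E)$, Zorn's lemma justified by the distributivity condition, the sup-distance $d_T$, the map $e(a)=d(\cdot,a)$ whose membership in $T(\mathbf E)$, minimality and isometric character you verify correctly, and the closing argument that any hyperconvex $F$ with $e(E)\subseteq F\subseteq T(\mathbf E)$ must be all of $T(\mathbf E)$ (your phrase ``the balls pairwise meet'' should read ``the balls satisfy the hyperconvexity hypothesis $d_T(e(a),e(b))\leq f(a)\oplus\overline{f(b)}$'', but that is what your computation actually establishes).

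There is, however, one step where the plan as written does not close, and it sits exactly where you locate the principal obstacle. In both the convexity and the ball-intersection verifications you build a candidate upper bound ($h=(f\oplus p)\wedge(g\oplus\overline{q})$, resp.\ $\bigwedge_i(f_i\oplus r_i)$), check it lies in $\Delta(\mathbf E)$, and take a minimal $z$ below it. But $z\leq h$ delivers only half of each ball condition: $d_T(f,z)\leq p$ unfolds into $z(x)\leq f(x)\oplus p$ \emph{and} $f(x)\leq z(x)\oplus\overline{p}$, and the second is a lower bound on $z$ that passing to a minimal element can only make harder to satisfy; nothing in the proposal rules out that the minimal $z$ drops below it. The missing ingredient is the extremality identity characterizing minimal members of $\Delta(\mathbf E)$, namely $f(x)=\bigvee_{y}\lceil d(x,y)\oplus-\overline{f(y)}\rceil$, obtained by perturbing $f$ at a single point and invoking minimality. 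With it the lower bounds do follow: from $d(x,y)\leq z(x)\oplus\overline{z(y)}$ and $z(y)\leq f(y)\oplus p$ one gets $d(x,y)\leq\bigl(z(x)\oplus\overline{p}\bigr)\oplus\overline{f(y)}$, hence $\lceil d(x,y)\oplus-\overline{f(y)}\rceil\leq z(x)\oplus\overline{p}$ for every $y$, and the identity gives $f(x)\leq z(x)\oplus\overline{p}$; the Helly case is identical with $r_i$ in place of $p$. So the gap is localized and repairable by one lemma you did not state, but as it stands the hyperconvexity of $T(\mathbf E)$ --- the heart of the theorem --- is asserted rather than proved.
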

%
\subsection{Fixed point property}

An element $v\in \mathcal H$ is \emph{self-dual} if $\overline v=v$, it is   \emph{\emph{accessible}}\;  if there is some $r\in \mathcal H$ with $v\not \leq r$ and $v \leq r\oplus \overline r$ and \emph{\emph{inaccessible}}\;  otherwise. Clearly, $0$ is inaccessible;   every inaccessible element $v$ is self-dual (otherwise, $\overline v$ is incomparable to $v$ and we may choose $r:= \overline v$). We say that a space $\mathbf E$ is \emph{bounded}\;  if  $0$ is the only inaccessible element below the diameter $\delta(\mathbf E)$ of $\mathbf E$ (the \emph{diameter} of $\mathbf E$ is $\delta(\mathbf E):= \bigvee \{d(x,y): x,y\in E\}$).

%
\begin{theorem} \label{thm:cor3}
If a generalized   metric space  over a Heyting algebra is bounded and hyperconvex  then every commuting family  of non expansive self maps has a common fixed point. 
\end{theorem}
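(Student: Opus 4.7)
The plan is to adapt the Sine--Soardi strategy to Heyting-algebra-valued metric spaces: first dispatch the case of a single non-expansive self map $f\colon\mathbf{E}\to\mathbf{E}$, then bootstrap to the commuting family $\mathcal{F}$ via the observation that the fixed-point set $\operatorname{Fix}(f)$ inherits both hyperconvexity and boundedness.

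\textbf{Single map.} I would apply Zorn's lemma to the poset $\mathcal{A}$ of non-empty admissible subsets (intersections of closed balls) that satisfy $f(A)\subseteq A$, ordered by reverse inclusion: $\mathbf{E}\in\mathcal{A}$, and a descending chain has non-empty intersection by the 2-Helly property packaged inside hyperconvexity, the intersection being again admissible and $f$-invariant. Let $A_0$ be a minimal element. Since $f(A_0)\subseteq A_0$, the admissible hull $\operatorname{cov}(f(A_0))$ is admissible, contained in $A_0$, and $f$-stable (as $f(\operatorname{cov}(f(A_0)))\subseteq f(A_0)\subseteq\operatorname{cov}(f(A_0))$), so minimality yields $A_0=\operatorname{cov}(f(A_0))$. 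Now assume $\delta:=\delta(A_0)\ne 0$; boundedness forces $\delta$ to be accessible, so pick $\rho\in\mathcal{H}$ with $\delta\le\rho\oplus\overline{\rho}$ and $\delta\not\le\rho$. The set
\[
A_1 := A_0 \cap \bigcap_{a\in A_0} B(a,\rho)
\]
is admissible, and is non-empty because, using any representation $A_0=\bigcap_i B(y_i,s_i)$, the pairwise intersections of the defining balls are non-empty ($d(a,a')\le\delta\le\rho\oplus\overline{\rho}$ between the new balls, and $d(y_i,a)\le s_i\le s_i\oplus\overline{\rho}$ across the two families). For $x\in A_1$, non-expansion gives $d(f(x),f(a))\le d(x,a)\le\rho$ for all $a\in A_0$, so $f(A_0)\subseteq B(f(x),\rho)$; the identity $A_0=\operatorname{cov}(f(A_0))$ then promotes this to $A_0\subseteq B(f(x),\rho)$, i.e.\ $f(x)\in A_1$. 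Minimality forces $A_1=A_0$, whence $\delta\le\rho$, contradicting the choice of $\rho$. Thus $\delta=0$, $A_0=\{x_0\}$, and $f(x_0)=x_0$.

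\textbf{Fixed-point set.} For a family $(B(x_i,r_i))_{i\in I}$ with $x_i\in\operatorname{Fix}(f)$ and $d(x_i,x_j)\le r_i\oplus\overline{r_j}$, the intersection $M:=\bigcap_i B(x_i,r_i)$ is non-empty, admissible, bounded, and $f$-stable (each $B(x_i,r_i)$ being $f$-stable because $f(x_i)=x_i$ and $f$ is non-expansive). Admissible subspaces of hyperconvex spaces are themselves hyperconvex, so the single-map case applied to $f|_M$ gives a fixed point in $M$, showing $\operatorname{Fix}(f)\cap M\ne\emptyset$; hence $\operatorname{Fix}(f)$ is hyperconvex, and it is bounded since $\delta(\operatorname{Fix}(f))\le\delta(\mathbf{E})$.

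\textbf{Commuting family.} Apply Zorn to the collection of non-empty, hyperconvex, $\mathcal{F}$-invariant subspaces of $\mathbf{E}$ ordered by reverse inclusion; descending chains have non-empty intersection by 2-Helly, and that intersection is $\mathcal{F}$-invariant and, as an admissible subspace of a hyperconvex space, hyperconvex. Let $\mathbf{A}$ be minimal. For each $f\in\mathcal{F}$, $\operatorname{Fix}(f|_{\mathbf{A}})$ is non-empty, hyperconvex, bounded, and $\mathcal{F}$-invariant by commutativity (for $g\in\mathcal{F}$ and $x\in\operatorname{Fix}(f|_{\mathbf{A}})$, $f(g(x))=g(f(x))=g(x)$). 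Minimality forces $\operatorname{Fix}(f|_{\mathbf{A}})=\mathbf{A}$ for every $f\in\mathcal{F}$, so every element of $\mathbf{A}$ is a common fixed point. The main obstacle is the singleton step: producing the admissible $f$-invariant shrinkage $A_1$ and certifying that it contradicts minimality precisely when $\delta(A_0)$ is accessible. The pivotal ingredient there is the minimality identity $A_0=\operatorname{cov}(f(A_0))$, which transports the local containment $f(A_0)\subseteq B(f(x),\rho)$ to the global $A_0\subseteq B(f(x),\rho)$; without it the $f$-invariance of $A_1$ is not visible.
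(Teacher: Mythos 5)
Your overall strategy is exactly the one the paper has in mind: Theorem \ref{thm:cor3} is attributed to Baillon, and the paper's ``proof'' is the remark that Baillon's argument for ordinary bounded hyperconvex spaces carries over (alternatively it is derived from the Khamsi--Pouzet theorem on compact normal structures stated just after it). Your single-map step is a correct adaptation of the Sine--Soardi argument, modulo involution bookkeeping: from $d(a,x)\le\rho$ you get $d(f(a),f(x))\le\rho$, hence $f(A_0)$ lies in the ball of radius $\overline{\rho}$ centred at $f(x)$, and it is that ball which must contain $\operatorname{cov}(f(A_0))=A_0$; the argument closes, but $\rho$ and $\overline{\rho}$ should be kept distinct throughout. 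The proof that $\operatorname{Fix}(f)$ is hyperconvex is the standard Baillon lemma and is fine.

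The gap is in your last paragraph. You apply Zorn's lemma to the family of non-empty hyperconvex $\mathcal{F}$-invariant subspaces and claim that a descending chain has non-empty intersection ``by 2-Helly''. But the members of such a chain are hyperconvex subspaces, not admissible sets: $\operatorname{Fix}(f)$ is hyperconvex yet in general is \emph{not} an intersection of closed balls of $\mathbf{E}$, so the 2-Helly/compactness property of the ambient ball system says nothing about the intersection of the chain. (In your single-map Zorn argument the chain members were admissible, which is exactly why the same step worked there.) The assertion that a decreasing chain of non-empty bounded hyperconvex subsets has non-empty (hyperconvex) intersection is precisely Baillon's intersection theorem --- the genuinely hard part of the whole result, requiring its own transfinite construction --- and it is what the paper generalizes as Theorem \ref{thm:best} (down-directed families of one-local retracts have non-empty intersection). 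Without supplying that argument, your proof only covers finite commuting families, where a direct induction on $\operatorname{Fix}(f_1)\supseteq\operatorname{Fix}(f_1)\cap\operatorname{Fix}(f_2)\supseteq\cdots$ suffices and no Zorn is needed.
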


This result was obtained by J.B. Baillon \cite{baillon} for ordinary metric spaces. His proof applies without much changes.  It follows from a much more general result that we present in the next section. 
\subsection{Compact normal structure}
A generalization to   spaces with a compact and normal structure of the fixed point theorem of Sine and Soardi was obtained by Kirk \cite{kirk}, then M.A.Khamsi \cite{khamsi} extended to these spaces the result of Baillon.  
We extend first the notion of Penot \cite{penot} of compact normal structure to our spaces. 

A generalized metric space $\mathbf E$ has a \emph{compact structure }\;  if  the intersection of every family of closed balls  is nonempty provided that the  intersections of finite subfamilies are nonempty.

The \emph{diameter}\;  $\delta_{\mathbf E}(A)$ of a subset $A$ of a metric space $\mathbf E$ is 
$\bigvee \{d(x,y): x,y\in A\}$. The \emph{ radius }\;  $r_{\mathbf E}(A)$ of a subset $A$ is $\bigwedge\{ r \in \mathcal H: A\subseteq B(x,r)\;  \text{for some}\;  x\in A\}$.

 A subset $A$ of a metric space  $\mathbf E$ is \emph{equally centered}\;  if  $r_{\mathbf E} (A) =\delta_{\mathbf E}(A)$. For an example, if $A$ is the empty set and $E$ is nonempty   then $A$ is not equally centered. If $A$  a singleton, say $a$, then $A$ is equally centered.
 
 The space $\mathbf E$ has a  \emph{\emph{normal  structure}}\;  if no intersection of closed balls $A$ distinct from a singleton is equally centered. Equivalently, if  $|A|\not =­1$  then   $r_{\mathbf E} (A) \not =\delta_{\mathbf E}(A)$.

\begin{example} 

If $\mathbf E$ is a hyperconvex metric space, it has a compact structure; if it is bounded it has a normal structure (in fact, the radius is half the diameter). 

\end{example}
\begin{lemma}
Let $A$ be an intersection of balls  of $\mathbf E$. If $\delta_{\mathbf E}(A)$ inacessible  then  $A$ is equally centered; the  converse holds if $\mathbf E$ is hyperconvex. 
\end{lemma}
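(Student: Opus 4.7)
The plan is to treat the two implications separately; both rely on the observation that if $A \subseteq B(x,r)$ with $x \in A$, then for every $y, z \in A$ one has $d(y,z) \leq d(y,x) \oplus d(x,z) = \overline{d(x,y)} \oplus d(x,z) \leq \overline{r} \oplus r$, so $\delta_{\mathbf E}(A) \leq \overline{r} \oplus r$. I will also use the standing fact that $v := \delta_{\mathbf E}(A)$ is self-dual: the set $\{d(x,y) : x,y \in A\}$ is stable under the involution (swap $x$ and $y$), hence so is its supremum.

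For the forward direction, I first record the easy inequality $r_{\mathbf E}(A) \leq v$: take any $x \in A$ and note that $A \subseteq B(x,v)$. For the reverse, fix $r$ and $x \in A$ with $A \subseteq B(x,r)$; the observation above yields $v \leq \overline{r} \oplus r$. The assumed inaccessibility of $v$ asserts that $v \leq s \oplus \overline{s}$ forces $v \leq s$; applying this with $s := \overline{r}$ gives $v \leq \overline{r}$, and self-duality of $v$ promotes this to $v \leq r$. Taking the infimum over all such $r$ yields $v \leq r_{\mathbf E}(A)$.

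For the converse, assume $\mathbf E$ is hyperconvex and argue by contrapositive: suppose $v$ is accessible, witnessed by some $r$ with $v \not\leq r$ and $v \leq r \oplus \overline{r}$. Writing $A = \bigcap_{i \in I} B(x_i, r_i)$, the idea is to enlarge this family by the balls $\{B(y, r) : y \in A\}$ and apply hyperconvexity to the combined family. The required pairwise conditions reduce to routine checks: the original balls are mutually compatible because $A$ is nonempty; the mixed pairs use only that $0$ is the least element and $d(y, x_i) \leq \overline{r_i}$ for $y \in A$; the genuinely new pair gives $d(y,z) \leq v \leq r \oplus \overline{r}$ for $y, z \in A$. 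Hyperconvexity then produces $z \in A$ with $d(y,z) \leq r$ for every $y \in A$, i.e., $d(z,y) = \overline{d(y,z)} \leq \overline{r}$; hence $A \subseteq B(z, \overline{r})$ and $r_{\mathbf E}(A) \leq \overline{r}$. Self-duality together with $v \not\leq r$ forces $v \not\leq \overline{r}$, so $r_{\mathbf E}(A) \leq v \wedge \overline{r} < v$, contradicting that $A$ is equally centered.

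The main difficulty I anticipate is simply the involution bookkeeping — in particular, deciding whether to feed $r$ or $\overline{r}$ into the inaccessibility condition, and translating between $d(y,z)$ and $d(z,y)$ when interpreting ball-membership — but the self-duality of the diameter renders all these swaps essentially costless.
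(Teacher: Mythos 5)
Your proof is correct. Note that the paper states this lemma without any proof at all, so there is nothing to compare against; your argument is the natural one, and both directions check out: the forward direction correctly combines $\delta_{\mathbf E}(A)\leq \overline r\oplus r$ with inaccessibility and the self-duality of the diameter (which, as you observe, holds for any $A$ because the set $\{d(x,y):x,y\in A\}$ is involution-stable and the involution is an order automorphism, hence preserves suprema), and the converse correctly exploits the hypothesis that $A$ is an intersection of balls by adjoining the balls $B(y,r)$, $y\in A$, and verifying the pairwise compatibility conditions of hyperconvexity (silently repairing the paper's typo $d(x_i,y_i)$ for $d(x_i,x_j)$ in that definition). The only caveat, which is really an imprecision in the paper's statement rather than in your proof, is that both directions tacitly require $A\neq\emptyset$ (for the forward direction you need some $x\in A$ to witness $r_{\mathbf E}(A)\leq\delta_{\mathbf E}(A)$, and the paper itself remarks that the empty set is not equally centered even though its diameter $\bigvee\emptyset=0$ is inaccessible); it would be worth flagging that assumption explicitly.
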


Khamsi and the third  author of this paper obtained the following result \cite{khamsi-pouzet}: 
\begin{theorem}
If a generalized   metric space   has a compact and normal structure  then every commuting family  of non expansive self maps has a common fixed point. 
\end{theorem}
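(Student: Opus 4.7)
The approach is to adapt the Kirk--Penot proof of the single-map fixed point theorem (for spaces with compact normal structure) and combine it with the Baillon-style argument for commuting families of non-expansive maps (classically formulated in hyperconvex spaces). The whole argument is driven by Zorn's lemma on admissible subsets, where by \emph{admissible} I mean a (nonempty) intersection of closed balls.

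First, consider the family $\mathcal{A}$ of nonempty admissible subsets of $E$ that are invariant under every $f \in \mathcal{S}$. Ordering $\mathcal{A}$ by reverse inclusion, the intersection of any descending chain is admissible, $\mathcal{S}$-invariant, and nonempty by the compact structure hypothesis (the finite subintersections are members of the chain itself, hence nonempty). Zorn's lemma thus delivers a minimal element $A \in \mathcal{A}$. The aim is to show $|A| = 1$; this would force $f(A) = A$ to be that single point for every $f \in \mathcal{S}$, giving a common fixed point.

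Suppose for contradiction that $|A| \geq 2$. Then $\delta_{\mathbf{E}}(A) \neq 0$ and the normal structure hypothesis yields $r := r_{\mathbf{E}}(A) \lneq \delta_{\mathbf{E}}(A)$. The natural smaller candidate is the Chebyshev set
\[
A^{*} := \{x \in A : A \subseteq B(x, r)\} = A \cap \bigcap_{y \in A} B(y, r),
\]
which is admissible by construction and has diameter at most $r$, hence is strictly contained in $A$. Its nonemptiness is extracted from the compact structure by approximating $r$ from above in the family of admissible sets $A \cap \bigcap_{y \in A} B(y, s)$, each of which is nonempty by the very definition of $r$ as an infimum. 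It remains to show that $A^{*}$ is $\mathcal{S}$-invariant, and this is the crucial step where the commutativity assumption enters. My route is an intermediate lemma asserting that, for every $f \in \mathcal{S}$, the admissible hull $\mathrm{cov}(f(A)) := \bigcap\{B(x,s) : f(A) \subseteq B(x,s)\}$ coincides with $A$: one checks that $\mathrm{cov}(f(A))$ is admissible, nonempty, contained in $A$, and $\mathcal{S}$-invariant (exploiting the commutation identity $g(f(A)) = f(g(A)) \subseteq f(A)$ together with the interaction between non-expansive maps and intersections of closed balls); minimality of $A$ then forces equality. Granted this lemma, for $x \in A^{*}$ non-expansiveness of $f$ gives $f(A) \subseteq B(f(x), r)$, so $A = \mathrm{cov}(f(A)) \subseteq B(f(x), r)$, and therefore $f(x) \in A^{*}$. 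The strict inclusion $A^{*} \subsetneq A$ then contradicts the minimality of $A$, concluding the proof.

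The main obstacle, as flagged above, is the verification that the admissible hull $\mathrm{cov}(f(A))$ is itself $\mathcal{S}$-invariant. In the classical hyperconvex setting this is almost automatic from the ball description of hyperconvex hulls, but in the abstract compact normal structure setting one must work directly from the defining intersection of closed balls, using the non-expansiveness of each $g \in \mathcal{S}$ and the commutation $g \circ f = f \circ g$ to transfer the inclusion $g(f(A)) \subseteq f(A)$ from the set $f(A)$ to its entire admissible hull. This delicate interplay between commutation and the admissible-hull operator is the technical heart of the argument and is what extends Baillon's theorem beyond the hyperconvex realm into the generalized metric setting considered here.
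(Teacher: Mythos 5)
Your architecture (Zorn's lemma on $\mathcal S$-invariant admissible sets, then normal structure to shrink a minimal set to its Chebyshev centre set) is the classical Kirk--DeMarr scheme, and for a \emph{single} non-expansive map it does close: there one only needs $\mathrm{cov}(f(A))$ to be $f$-invariant, which is automatic from $\mathrm{cov}(f(A))\subseteq A$, whence $f(\mathrm{cov}(f(A)))\subseteq f(A)\subseteq \mathrm{cov}(f(A))$ and minimality gives $\mathrm{cov}(f(A))=A$. But for a commuting family, the step you yourself flag as the ``technical heart'' --- the $\mathcal S$-invariance of $\mathrm{cov}(f(A))$ --- is a genuine gap, not a deferred verification. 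The mechanism you propose, transferring the inclusion $g(f(A))=f(g(A))\subseteq f(A)$ from the set $f(A)$ to its admissible hull via non-expansiveness, implicitly requires an inclusion of the form $g(\mathrm{cov}(T))\subseteq\mathrm{cov}(g(T))$, and this fails for non-expansive maps even in hyperconvex spaces. In $(\R^2,\ell^\infty)$ take $T=\{(0,0),(1,0)\}$, so that $\mathrm{cov}(T)=[0,1]\times\{0\}$; the map fixing $T$ pointwise and sending $(1/2,0)$ to $(1/2,1/2)$ is non-expansive on these three points and extends to a non-expansive self-map of the plane, yet it carries a point of $\mathrm{cov}(T)$ outside $\mathrm{cov}(T)=\mathrm{cov}(g(T))$. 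So non-expansive maps simply do not respect the admissible-hull operator, and commutativity bookkeeping on $f(A)$ alone does not repair this. This obstruction is exactly why the passage from one map to a commuting family (Belluce--Kirk, Lim, Baillon) is the hard part of the subject.

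The proof the paper points to (Khamsi--Pouzet \cite{khamsi-pouzet}) takes a different route that circumvents the hull problem entirely: one proves the single-map theorem, shows that the fixed-point set of a non-expansive self-map is a nonempty \emph{one-local retract} of $\mathbf E$, and establishes Theorem \ref{thm:best} --- the intersection of a down-directed family of one-local retracts of a space with compact normal structure is a nonempty one-local retract --- by adapting Baillon's intersection process. Zorn's lemma is then applied to the down-directed family of common-fixed-point sets of finite subfamilies (each nonempty by induction, since a one-local retract inherits compact normal structure and the remaining maps restrict to it by commutativity), not to invariant admissible sets. To complete your argument you would need either an independent proof that $\mathrm{cov}(f(A))=A$ for a minimal $\mathcal S$-invariant admissible $A$ (which the counterexample above shows cannot come from the stated transfer principle), or to switch to the fixed-point-set/one-local-retract machinery. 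A secondary caveat: in the generalized setting the radius $r_{\mathbf E}(A)$ is an infimum in the ordered set $\mathcal H$ of values, so the nonemptiness of $A^{*}$ and the usability of $r\neq\delta_{\mathbf E}(A)$ also require more care than ``approximating $r$ from above.''
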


The key of their proof is the notion of one-local retract, already used in \cite{khamsi}. 
We say that $\mathbf A:=(A, d_{\restriction A})$  is a \emph{one-local retract} of $\mathbf E$ if it is a retract of $(A\cup \{x\},  d_{\restriction A\cup \{x\}})$ (via the identity map) for every $x\in E$.

Adapting the intersection process discovered by  Baillon \cite{baillon}, they proved:

\begin{theorem}\label{thm:best} If a  generalized metric space $\mathbf E$ has a compact normal structure,  then, the intersection of every down-directed family $\mathcal F$ of one-local retracts of $\mathbf E$ is a nonempty one-local retract of $\mathbf E$. 
 \end{theorem}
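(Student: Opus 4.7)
The plan is to imitate Baillon's intersection argument \cite{baillon}, in the form used by Khamsi \cite{khamsi}, but recast for $\mathcal H$-valued distances. Set $F:=\bigcap\mathcal F$. Two things need to be shown: (i) $F$ is nonempty, and (ii) for every $x\in E$ there is some $r(x)\in F$ with $d(r(x),z)\le d(x,z)$ for all $z\in F$; together these say exactly that $F$ is a one-local retract of $\mathbf E$.

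The first step is an auxiliary lemma: every one-local retract $A$ of $\mathbf E$ is \emph{admissible}, i.e.\ equals the intersection of all closed balls of $\mathbf E$ that contain it. This is where normal structure enters: if the admissible hull of $A$ strictly contained $A$, a Zorn plus compact structure argument produces inside the hull a minimal nonempty intersection of balls missing $A$; the one-local projection of any point of this minimal set onto $A$ forces its radius to coincide with its diameter, contradicting normal structure. Granted this lemma, every $A\in\mathcal F$ is an intersection of balls.

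Let $\mathcal B$ be the family of all closed balls of $\mathbf E$ containing some member of $\mathcal F$; then $F=\bigcap\mathcal B$. Any finite subfamily of $\mathcal B$ has nonempty intersection: balls $B_{1},\dots,B_{n}\in\mathcal B$ come with witnesses $A_{1},\dots,A_{n}\in\mathcal F$ such that $A_{j}\subseteq B_{j}$, and down-directedness yields some $A\in\mathcal F$ with $A\subseteq A_{1}\cap\dots\cap A_{n}\subseteq B_{1}\cap\dots\cap B_{n}$; the set $A$ is nonempty because any one-local retract is (take the image of a local retraction at one of its points). The compact structure then forces $F=\bigcap\mathcal B\neq\emptyset$, which is (i).

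For (ii), fix $x\in E$ and enlarge $\mathcal B$ to $\mathcal B_{x}:=\mathcal B\cup\{B(z,d(x,z)):z\in F\}$. Take a finite subfamily of $\mathcal B_{x}$: the members from $\mathcal B$, as above, all contain some common $A\in\mathcal F$, while the extra balls are indexed by finitely many $z_{1},\dots,z_{k}\in F\subseteq A$. Apply the one-local retract property of $A$ at $x$ to obtain $a\in A$ with $d(a,y)\le d(x,y)$ for every $y\in A$; in particular $a\in B(z_{j},d(x,z_{j}))$ for each $j$, so $a$ belongs to every ball in the finite subfamily. The compact structure of $\mathbf E$ then yields $\bigcap\mathcal B_{x}\neq\emptyset$, and any point $r(x)$ in it lies in $F$ and satisfies $d(r(x),z)\le d(x,z)$ for all $z\in F$. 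This completes (ii) and proves $F$ is a one-local retract.

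The main obstacle is the admissibility lemma: establishing that a one-local retract in a space with only compact normal structure (and no hyperconvexity at hand) is an intersection of balls is the delicate point, and it is precisely where normal structure is indispensable. Once that lemma is secured, the remainder is a standard Baillon-style finite-intersection-property argument, with the one-local retract property of each $A\in\mathcal F$ playing the role that the admissible projection plays in the hyperconvex case.
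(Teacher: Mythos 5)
The paper itself does not prove Theorem \ref{thm:best}; it attributes it to \cite{khamsi-pouzet} and only indicates that the proof adapts Baillon's intersection process. So I can only judge your argument on its own terms, and it has a genuine gap: the auxiliary lemma on which everything rests --- \emph{every one-local retract of $\mathbf E$ is an intersection of closed balls of $\mathbf E$} --- is false, even in the presence of compact normal structure. Take $\mathbf E:=([0,1]^2,\ell^\infty)$, an ordinary bounded hyperconvex space with compact normal structure (every intersection of balls is a coordinate box, whose radius is half its diameter). The diagonal $D:=\{(t,t):t\in[0,1]\}$ is the image of the non-expansive retraction $(x,y)\mapsto\bigl(\tfrac{x+y}{2},\tfrac{x+y}{2}\bigr)$, hence a retract and a fortiori a one-local retract; but every intersection of closed balls of $\mathbf E$ is a box, so the intersection of all balls containing $D$ is $[0,1]^2\neq D$. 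No amount of normal structure can rescue the lemma, and your sketch of its proof (producing a minimal admissible set disjoint from $A$ and invoking the one-local projection) does not actually derive that this minimal set is equally centered.

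This is fatal to the rest of the argument, not a repairable detail: with $\mathcal B$ the family of balls containing some member of $\mathcal F$, you only have $\bigcap\mathcal B=\bigcap_{A\in\mathcal F}\mathrm{cov}(A)\supseteq F$, where $\mathrm{cov}(A)$ is the admissible hull, and this containment is strict in general. Compact structure then yields a point of $\bigcap_{A}\mathrm{cov}(A)$ (and, in step (ii), of $\bigcap\mathcal B_x$), but nothing places that point in $F=\bigcap\mathcal F$, which is the whole content of the theorem. The known proof replaces your lemma by a different characterization of one-local retracts --- $A$ is a one-local retract iff every intersection of balls \emph{centered at points of} $A$ that is nonempty in $\mathbf E$ meets $A$ --- and then runs Baillon's Zorn argument not on balls of $\mathbf E$ but on down-directed families $(D_A)_{A\in\mathcal F}$ of nonempty admissible subsets $D_A\subseteq A$, using normal structure to show a minimal such family consists of singletons that stabilize to a common point of $F$. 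Your finite-intersection-property skeleton is the right spirit, but the object it is applied to has to be these relative admissible families, not the admissible hulls of the $A$'s.
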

 
 We stop here the description of categorical properties of generalized metric spaces. The interested reader will find more, notably on Hole-preserving maps and one-local retracts in \cite{jawhari-al}  and in \cite{kabil-pouzet2}. 
 
 In  \cite{pouzet-rosenberg} there is a  study of more general metric spaces for which the neutral element, $0$  of the set $\mathcal H$ of values is not necessarily the least element  and condition $(i)$ for the distance is replaced $d(x,y)  = 0$ iff $x=y$. Despite of the scope and applicability we volontarily omitted it.

\section{From generalized metric spaces to graphs, ordered sets and automata} 

We illustrate the notion of generalized metric spaces with graphs, ordered sets and a special kind of transition systems. We conclude this section with an application of the notion of injective envelope to the freeness of a monoid of final segments. 
\subsection{The case of ordinary metric spaces} 
Let $\mathbb R^{+}$ be the set of non negative reals with the addition and natural order, the involution being the identity. Let $\mathcal H$ be $\mathbb R^{+} \cup \{+\infty\}$. Extend to $\mathcal H$ the addition and  order in a natural way. Then, metric spaces over $\mathcal H$ are direct  sums of ordinary metric spaces (the distance between elements in different components being $+\infty$). The set $\mathcal H$ is a Heyting algebra  and  the distance $d_{\mathcal H}$ once restricted to $\mathbb R^{+}$  is the absolute value. The inaccessible  elements are $0$ and $+\infty$ hence, if one  deals with ordinary metric spaces,  unbounded  spaces in the above sense are those which are unbounded in the ordinary sense.  If one deals with ordinary metric spaces, infinite products  can yields spaces  for which $+\infty$ is attained. Thus, one has to  replace powers of $\mathbb R^{+}$ by  $\ell^{\infty}$-spaces (if $I$ is any set, $\ell^{\infty}_{\mathbb R} (I)$ is the set of bounded families  $(x_i)_{i\in I}$ of reals numbers, endowed with the sup-distance). With that, the notions of  absolute retract, injective, hyperconvex and retract of some $\ell^{\infty}_{\mathbb R}(I)$ space coincide. This is the well known  result of Aronszjan-Panitchpakdi \cite{aronszajn-panitchpakdi}. The existence of an injective envelope was proved by Isbell \cite{isbell}. The injective envelope of a $2$ element ordinary metric space is a bounded closed interval of the real line; injective envelopes of ordinary metric spaces consisting of few many elements have been described \cite{dress}; for applications see \cite{chepoi}. The existence of a fixed point for a non-expansive map on a bounded hyperconvex space is the famous result of Sine and Soardi \cite{sine, soardi}. Theorem \ref{thm:cor3} applied to a bounded  hyperconvex metric space is  Baillon's fixed point theorem \cite{baillon}.  Applied to 
a metric space with  a compact  normal structure,  this is the result obtained by  Khamsi \cite{khamsi}(1996). 

 The results presented about generalized metric spaces over a Heyting algebra   apply to ultrametric spaces  over $\mathbb R^{+} \cup \{+\infty\} $.  Indeed, with a the join operation, the distributivity condition holds, hence $\mathbb R^{+} \cup \{+\infty\} $ is a Heyting algebra. A similar characterization to ours  was obtained in \cite{bayod-martinez};  a description of the injective envelope  is also given.  The paper \cite{pouzet-rosenberg} contains a study of ultrametric spaces  over a complete lattice satisfying this  distributivity condition, called an \emph{op-frame}. Metric spaces over op-frame are  studied in \cite{ackerman}. Ultrametric spaces  over a lattice and their  connexion with collections of equivalence relations have been recently studied in \cite{braunfeld}.  More general ultrametric spaces have been studied in \cite{priess-crampe-ribenboim1, priess-crampe-ribenboim2, priess-crampe-ribenboim3}. Due to their interest, we  devote  most of the last  section of this paper to their study.

\subsection{Directed  graphs, transition systems and ordered sets}
Let us equip  directed graphs with the zizag distance. The set $\mathbf F(\Lambda^*)$ of values of the distance is an involutive Heyting algebra. We may apply the results of the theory.

\begin{lemma} A map from a reflexive directed graph $G$ into an other is a graph-homomorphism iff it is non-expansive w.r.t. the zigzag distance. 
\end{lemma}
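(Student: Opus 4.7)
The plan is to unwind both directions through the definition of the zigzag distance together with the reverse-inclusion order on $\mathbf F(\Lambda^*)$, so that $d_{G'}(f(x),f(y))\le d_G(x,y)$ means precisely $d_G(x,y)\subseteq d_{G'}(f(x),f(y))$ as sets of words over $\Lambda=\{+,-\}$. In each direction the argument reduces to transporting or constructing a graph-homomorphism between a zigzag $L_u$ and one of the two ambient graphs.

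For the forward implication I would assume $f:G\to G'$ is a graph-homomorphism, fix $x,y\in V$, and take any $u\in d_G(x,y)$. By definition there is a graph-homomorphism $h:L_u\to G$ with $h(0)=x$ and $h(\ell(u))=y$. Since the composition of graph-homomorphisms is a graph-homomorphism, $f\circ h:L_u\to G'$ witnesses $u\in d_{G'}(f(x),f(y))$. Hence $d_G(x,y)\subseteq d_{G'}(f(x),f(y))$, which is exactly $d_{G'}(f(x),f(y))\le d_G(x,y)$ in $\mathbf F(\Lambda^*)$.

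For the converse I would assume $f$ is non-expansive and take an arbitrary edge $(x,y)\in\mathcal E$. The one-letter word $+$ yields the reflexive zigzag $L_+=(\{0,1\},\delta_+)$ with $\delta_+(0,1)=+$; the assignment $0\mapsto x$, $1\mapsto y$ is a graph-homomorphism $L_+\to G$ because $(x,y)\in\mathcal E$ and the loops of $L_+$ are absorbed by the reflexivity of $G$. Consequently $+\in d_G(x,y)$, and non-expansiveness (read as set inclusion, by the above convention) yields $+\in d_{G'}(f(x),f(y))$. Unfolding this once more produces a graph-homomorphism $L_+\to G'$ sending $0\mapsto f(x)$ and $1\mapsto f(y)$, forcing $(f(x),f(y))\in\mathcal E'$. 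Thus $f$ preserves edges, and since this is exactly the definition of being a graph-homomorphism, we are done; an edge of the form $(y,x)\in\mathcal E$ is handled identically with the companion word $-$.

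The only real subtlety is keeping the reverse-inclusion convention on $\mathbf F(\Lambda^*)$ straight; the mathematical content is that the single-letter words $+$ and $-$ encode edges of $G$ and $G'$ exactly, so local edge-preservation is the metric condition at those two particular values of the distance, and composition is the mechanism that promotes local preservation to non-expansiveness at every zigzag. I expect no genuine obstacle here; the main place one can slip is in the direction of the inequality between $d_G(x,y)$ and $d_{G'}(f(x),f(y))$ relative to inclusion, which must be tracked carefully throughout.
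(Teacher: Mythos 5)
Your proposal is correct and is exactly the intended argument: the paper states this lemma without proof (deferring to \cite{jawhari-al} and \cite{kabil-pouzet2}), and the standard verification is precisely what you give — composing $h:L_u\to G$ with $f$ for the forward direction, and reading off edges from the single-letter word $+$ (as in the remark ``$(x,y)\in\mathcal E$ iff $+\in d(x,y)$'' following Lemma 4.2) for the converse, with the reverse-inclusion order tracked correctly throughout.
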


\begin{lemma}\label{lem:connexity}
The distance $d$ of  a metric space $(E,d)$ over $\mathbf {F}({\Lambda^*})$ is the zigzag distance of some  reflexive directed graph $G:= (E, \mathcal E)$ iff  it satisfies the following property for all  $x,y,z \in E$, $u,v\in  \mathbf F(\Lambda^*)$:
 $u.v \in d(x,y)$ implies $u\in d(x,z)$ and $v\in d(z,y)$ for some $z\in E$. When this condition holds, $(x,y)\in \mathcal E$ iff $+\in d(x,y)$.
\end{lemma}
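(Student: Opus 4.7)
The plan is to establish the two directions separately, in both cases exploiting the obvious bijection between words $w\in\Lambda^*$ and reflexive oriented zigzags $L_w$.

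For the forward implication, suppose $d=d_G$ for a reflexive directed graph $G=(E,\mathcal E)$ and pick $u,v\in\Lambda^*$ with $u\cdot v\in d(x,y)$. By definition of the zigzag distance, there is a homomorphism $h\colon L_{uv}\to G$ with $h(0)=x$ and $h(\ell(u)+\ell(v))=y$. The zigzag $L_{uv}$ decomposes at its intermediate vertex $\ell(u)$ into the concatenation of $L_u$ (on vertices $0,\dots,\ell(u)$) and a shifted copy of $L_v$ (on vertices $\ell(u),\dots,\ell(u)+\ell(v)$). Setting $z:=h(\ell(u))$, the restrictions of $h$ give homomorphisms witnessing $u\in d_G(x,z)$ and $v\in d_G(z,y)$, which is the desired splitting.

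For the converse, assume the splitting condition holds and define $\mathcal E:=\{(x,y)\in E\times E : +\in d(x,y)\}$. Reflexivity of $G:=(E,\mathcal E)$ is immediate: by axiom~(i), $d(x,x)=0$, i.e.\ $d(x,x)=\Lambda^*$, which contains the letter $+$. Note also that by axiom~(iii), $-\in d(x,y)$ iff $+\in d(y,x)$ iff $(y,x)\in\mathcal E$, so backward letters correctly record the reverse edges of $G$. I then verify $d=d_G$ by two inclusions.

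For $d_G(x,y)\subseteq d(x,y)$: given $w\in d_G(x,y)$ and a homomorphism $h\colon L_w\to G$ with $h(0)=x$, $h(\ell(w))=y$, an edge-by-edge check shows $w_i\in d(h(i),h(i+1))$ for every $i$ (directly when $w_i=+$, via axiom~(iii) when $w_i=-$). Iterating the triangle inequality~(ii) then yields
\[ d(h(0),h(1))\oplus d(h(1),h(2))\oplus\cdots\oplus d(h(\ell(w)-1),h(\ell(w))) \;\subseteq\; d(x,y), \]
(recall $\le$ is reverse inclusion), and the concatenation $w=w_0\cdots w_{\ell(w)-1}$ lies in the left-hand side. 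For $d(x,y)\subseteq d_G(x,y)$ I argue by induction on $\ell(w)$ for $w\in d(x,y)$. When $\ell(w)=0$, $w=\Box$ forces $d(x,y)=\Lambda^*=0$, hence $x=y$, and the constant homomorphism into the loop at $x$ does the job. When $\ell(w)\ge 1$, write $w=\alpha\cdot w'$ with $\alpha\in\Lambda$; the splitting hypothesis gives $z\in E$ with $\alpha\in d(x,z)$ and $w'\in d(z,y)$. The former supplies the first edge of the required homomorphism (oriented $x\to z$ or $z\to x$ according to the sign of $\alpha$, by definition of $\mathcal E$ and axiom~(iii)), while the induction hypothesis applied to $w'$ supplies the rest; concatenation yields a homomorphism $L_w\to G$, proving $w\in d_G(x,y)$.

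The only subtle point is the converse direction: the splitting condition is precisely what is needed to peel off one letter at a time while preserving membership in the distance, which is otherwise obstructed because the monoid operation in $\mathbf F(\Lambda^*)$ does not admit cancellation. Everything else is bookkeeping on the bijection $w\mapsto L_w$ and straightforward use of the distance axioms.
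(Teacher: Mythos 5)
The paper states this lemma without proof (it defers all proofs in this part to \cite{jawhari-al} and the survey \cite{kabil-pouzet2}), so there is no in-paper argument to compare against; judged on its own, your proof is correct and is the standard argument: split a homomorphism of $L_{uv}$ at the intermediate vertex $\ell(u)$ for the forward direction, and for the converse recover $\mathcal E$ from $+\in d(x,y)$, get $d_G\subseteq d$ by iterating the triangle inequality along the zigzag, and get $d\subseteq d_G$ by peeling off one letter at a time via the splitting condition. You also correctly read the statement's quantification as ranging over words $u,v\in\Lambda^*$ (the displayed ``$u,v\in\mathbf F(\Lambda^*)$'' is evidently a typo), and your identification of where the splitting hypothesis is genuinely needed --- the absence of cancellation in the monoid $\mathbf F(\Lambda^*)$ --- is exactly right.
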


Due to Lemma \ref {lem:connexity} above, the various metric spaces mentionned above (injective, absolute retracts, etc.) are graphs equipped with the zigzag distance; in particular, the distance $d_{\mathbf F({\Lambda^*})}$  defined on $\mathbf F({\Lambda^*})$ is the zigzag distance of some graph. This later  fact leads to a fairly precise description of absolute retracts in the category of reflexive directed graphs (see \cite{kabil-pouzet}).

\subsection{Transition systems} Instead of a two-letters alphabet, we may consider a finite one, say $A$. The analog of directed graphs are transition systems.   A   \emph{transition sytems}  is a pair $\mathcal T:= (Q, \mathcal T)$, where the elements $q\in Q$ are called  \emph{states}  and the elements  of $T$, the \emph{transitions},  are triples $(p, a, q)\in Q\times A\times T$.

If $x$ and $y$ are two states, we may define the distance from $x$ to $y$ as the set $d_{\mathcal T} (x,y)$ of words coding the paths from $x$ to $y$. In automata theory this is simply   the \emph{ language accepted by the automaton}  made of $\mathcal T$, in-state $x$ and out-state $y$. 

To mimic the case of directed graphs, we could equip the alphabet $A$ of an involution $-$ and impose our transition systems to be\;  \emph{involutive}\; that is for every letter $a$, 
$(p, a, q)\in T$ iff $(p, \overline a, q) \in T$. This is a cosmetic change in the usual theory of languages. We could  impose the system to be\;  \emph{reflexive}\; that $(p,\alpha, p)\in T$ for every state $p$, letter $a$. This is a strong requirement, about the same than imposing our graphs to be reflexive. We refer to \cite{pouzet-rosenberg} for more. 

\subsection{Ordered sets}  In this case, zizags reduce to \emph{fences}.
There are  two fences of length $n$: the \emph{up-fence}; and the  \emph{down-fence}. The first one starts with $x_0<x_1>...$, the second with $x_0>x_1<..$. So one can express the distance as the  pair $(n,m)$ of integers such that $n$ is the  shortest length of an up-fence from $x$ to $y$ and $m$ the  shortest length of a down-fence from $x$ to $y$. For more, see Nevermann-Rival, \cite{nevermann-rival} 1985 and Jawhari-al  \cite{jawhari-al}1986.

\subsection{The case of oriented graphs}\label{subsection:orientedgraphs}
Oriented graphs and directed graphs behave  differently. Oriented  graphs cannot be modeled over a Heyting algebra (Theorem I V-3.1 of  \cite{jawhari-al} is erroneous), but the absolute retracts in this category can be (this was proved by Bandelt, Sa\"{\i}dane and the third  author of this paper and included in Sa\"{\i}dane's thesis \cite{saidane}). The appropriate Heyting algebra is $\mathbf N(\Lambda^*)$, the  \emph{MacNeille completion} of $\Lambda^{\ast}$.

The MacNeille completion of $\Lambda^{\ast}$ is in some sense the least complete lattice extending $\Lambda^{\ast}$. The definition goes as follows. If $X$ is a subset of $\Lambda^{\ast}$ ordered by the subword ordering then
 $$X^{\Delta}:= \bigcap_{x \in X} \uparrow x$$
is the {\it upper cone} generated by $X$, and
$$X^{\nabla}:= \bigcap_{x \in X} \downarrow x$$
is the {\it lower cone} generated by $X$.

The pair $(\Delta, \nabla)$ of mappings on the complete
lattice of subsets of  $\Lambda^{\ast}$ constitutes a Galois connection. Thus,  a set $Y$ is an upper cone  if
and only if $Y = Y^{\nabla \Delta}$, while a set $W$ is
an lower cone if and only if $W = W^{\Delta \nabla}.$ This Galois connection
$(\Delta, \nabla)$ yields the {\it MacNeille completion} of
$\Lambda^{\ast}.$ This completion is realized  as the complete
lattice $\{W^{\nabla}:  W\subseteq \Lambda^{\ast}\}$
ordered by inclusion or alternatively $\{Y ^{\Delta} : Y\subseteq \Lambda^{\ast}\}$ ordered by reverse inclusion. We choose as completion the set $\{Y ^{\Delta} : Y\subseteq \Lambda^{\ast}\}$ ordered by reverse inclusion  that we denote by $\mathbf {N}(\Lambda^{\ast})$. This complete lattice is studied in detail  in  Bandelt and Pouzet, 2018 \cite{bandelt-pouzet}.

We recall the following characterization of members of the MacNeille completion of $\Lambda^*$. 
\begin{proposition}\label{cancellation} \cite{bandelt-pouzet} Corollary 4.5.
A member  $Z$ of $\mathbf{F}(\Lambda^{*})$ belongs to $\mathbf {N}(\Lambda^{*})$ if and only if it satisfies the following cancellation rule:
if $u + v \in Z$ and $u - v \in Z$ then $u v \in Z$. 
\end{proposition}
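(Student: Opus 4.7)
The plan is to establish the two implications separately. The forward direction reduces cleanly to a combinatorial lemma on the subword order; the converse is the delicate one and requires exploiting cancellation to produce obstructions to MacNeille closure.

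For $(\Rightarrow)$, I would use $Z\in \mathbf{N}(\Lambda^{*})$ to write $Z=\bigcap_{y\in Y}\uparrow y$ for some $Y\subseteq \Lambda^{*}$, and observe that the cancellation rule passes to arbitrary intersections. It then suffices to prove, for every single word $y$, the lemma: \emph{if $y$ is a subword of both $\alpha+\beta$ and $\alpha-\beta$, then $y$ is a subword of $\alpha\beta$}. Fix subword embeddings $\phi\colon y\hookrightarrow \alpha+\beta$ and $\psi\colon y\hookrightarrow \alpha-\beta$. If $\phi$ avoids the inserted $+$, it factors through $\alpha\beta$ and we are done; otherwise $\phi$ sends some position $i$ of $y$ to the middle $+$, which forces $y_{i}=+$ and gives $y_{0}\cdots y_{i-1}\leq \alpha$, $y_{i+1}\cdots y_{n-1}\leq\beta$. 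Similarly for $\psi$ at some position $j$ with $y_{j}=-$. When both embeddings hit the middle, $i\neq j$ since $y_{i}\neq y_{j}$; assuming $i<j$, the prefix $y_{0}\cdots y_{i}$ embeds into $\alpha$ via the restriction of $\psi$ (because $i\leq j-1$ and $\psi$ sends $y_{0}\cdots y_{j-1}$ into $\alpha$), and the suffix $y_{i+1}\cdots y_{n-1}$ embeds into $\beta$ via the restriction of $\phi$. Concatenating gives $y\leq \alpha\beta$, completing $(\Rightarrow)$.

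For $(\Leftarrow)$, the goal is to show $Z=Z^{\nabla\Delta}$ under the cancellation hypothesis; equivalently, for each $w\notin Z$ I must exhibit some $v\in Z^{\nabla}$ with $v\not\leq w$. I would argue by contradiction, assuming there is $w\in Z^{\nabla\Delta}\setminus Z$. The clean subcase is when $w$ is maximal in $Z^{\nabla\Delta}\setminus Z$: then every strict super-word of $w$ lies in $Z$, so in particular $w+u,\,w-u\in Z$ for every $u\in\Lambda^{*}$; applying the cancellation rule with $\alpha=w$, $\beta=u=\Box$ yields $w\Box=w\in Z$, the desired contradiction. In general $Z^{\nabla\Delta}\setminus Z$ may contain infinite ascending chains and have no maximal element; the strategy is then to localize the argument: by Higman's lemma the subword order on $\Lambda^{*}$ is a well-quasi-order, so $Z$ and $Z^{\nabla\Delta}$ have finitely many minimal elements. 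Picking a minimal element $n_{j}$ of $Z^{\nabla\Delta}$ that is not in $Z$, and iteratively applying the cancellation rule to splittings of suitably chosen $w'\geq n_{j}$, one builds a common subword of $Z$ that fails to be $\leq w$, contradicting $w\in Z^{\nabla\Delta}$.

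The main obstacle is exactly the converse in the absence of a maximal element of $Z^{\nabla\Delta}\setminus Z$: the maximal-element reduction no longer closes in one step, and one has to organize the induction around the wqo structure and the minimal elements of the MacNeille closure, carefully tracking which of $\alpha+\beta$ and $\alpha-\beta$ leaves $Z$ at each splitting. Everything else is routine once this structural induction is set up, as carried out in Bandelt–Pouzet \cite{bandelt-pouzet}.
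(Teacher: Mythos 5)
The paper itself offers no proof of this proposition: it is quoted verbatim as Corollary~4.5 of \cite{bandelt-pouzet}, so there is no in-paper argument to compare yours against, and I can only judge the proposal on its own merits.

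Your forward direction is correct and complete. The reduction of $Z=\bigcap_{y\in Y}\uparrow y$ to the single-word lemma is sound (the cancellation rule is manifestly stable under arbitrary intersections), and the two-embedding case analysis is right: if both $\phi$ and $\psi$ hit the inserted middle letter, at positions $i$ and $j$ with $y_i=+$ and $y_j=-$, then $i\neq j$, and splicing the prefix read off from one embedding with the suffix read off from the other yields $y\leq uv$; the case $j<i$ is symmetric to the case $i<j$ you wrote out. This half is fine.

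The converse, however, contains a genuine gap, and it is exactly where the content of the proposition lies. Your ``clean subcase'' (a maximal element $w$ of $Z^{\nabla\Delta}\setminus Z$, for which $w+\Box$ and $w-\Box$ lie in $Z$ and cancellation forces $w\in Z$) is correct but covers only a situation that need not occur: $Z^{\nabla\Delta}\setminus Z$ is the intersection of an up-set with a down-set and can easily have no maximal element. For instance, with $Z=\uparrow\{++,--\}$ one has $Z^{\nabla}=\{\Box\}$, $Z^{\nabla\Delta}=\Lambda^{*}$, and $Z^{\nabla\Delta}\setminus Z$ is the infinite set of alternating words, which has no maximal element; here a cancellation failure does exist (take $u=+$, $v=-$, so that $++-\in Z$, $+--\in Z$, but $+-\notin Z$), but it is not produced by your maximal-element argument, and your proposed general strategy does not explain how to produce it. The passage ``iteratively applying the cancellation rule to splittings of suitably chosen $w'\geq n_j$, one builds a common subword of $Z$ that fails to be $\leq w$'' specifies neither which splittings are taken, nor what the induction hypothesis is, nor why the process terminates, nor why the word it outputs lands in $Z^{\nabla}$ while avoiding $\downarrow w$. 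As written this is a restatement of the goal rather than an argument, and the final sentence defers the whole construction back to \cite{bandelt-pouzet}. To close the proof you would need to make this induction precise --- for example, by working with the finite set $\Min(Z)$ guaranteed by Higman's lemma and showing directly that if every element of $Z^{\nabla}=\bigcap_{m\in\Min(Z)}\downarrow m$ is a subword of $w$ yet $w\notin Z$, then some pair $u+v,u-v\in Z$ with $uv\notin Z$ can be extracted --- and that is the substantive part of Bandelt--Pouzet's Corollary~4.5, which the proposal does not supply.
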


 The concatenation, order and involution defined on $\mathbf {F}(\Lambda^{\ast})$ induce an involutive  Heyting algebra  on  $\mathbf {N}(\Lambda^{\ast})$ (see Proposition 2.2 of \cite{bandelt-pouzet}). Being an involutive Heyting algebra, $\mathbf N({\Lambda^*})$ supports a  distance $d_{\mathbf N({\Lambda^*})}$ and this distance is the zigzag distance of a graph $G_{\mathbf {N}({\Lambda^*})}$. But it is not true that every oriented graph embeds isometrically into a power of that graph. For example, an oriented cycle cannot be embedded.  The following result characterizes  graphs which can be  isometrically embedded, via the zigzag distance, into  products of reflexive and oriented zigzags. It is stated in part in Subsection IV-4 of \cite{jawhari-al}.

\begin{theorem}\label{theo:isometric}
	For a  directed graph   $G$ equipped with the zigzag distance,  the following properties are equivalent:
\begin{enumerate} [(i)]
	\item $G$ is isometrically embeddable into a product of  reflexive and oriented zigzags;
	\item $G$ is isometrically embeddable into a power of $G_{\mathbf  N({\Lambda^*})}$;
	\item The values of the zigzag distance between  vertices of $V$ belong to $\mathbf N({\Lambda^*})$.
\end{enumerate}
\end{theorem}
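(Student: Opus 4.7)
The plan is to prove the cycle (iii) $\Rightarrow$ (ii) $\Rightarrow$ (i) $\Rightarrow$ (iii). The two ``outer'' implications rest on general facts about the involutive Heyting algebra $\mathbf{N}(\Lambda^*)$ and about the behaviour of the zigzag distance under products; the implication (ii) $\Rightarrow$ (i) carries the substantive content.

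For (iii) $\Rightarrow$ (ii), assuming $d_G$ takes values in $\mathbf{N}(\Lambda^*)$, the first step is to certify that $G$ is a generalized metric space over the Heyting algebra $\mathbf{N}(\Lambda^*)$ itself. Only the triangle inequality needs attention: the monoid operation $\oplus$ on $\mathbf{N}(\Lambda^*)$ is the MacNeille closure of concatenation on $\mathbf{F}(\Lambda^*)$, so the inequality valid in $\mathbf{F}(\Lambda^*)$ transfers because every $d_G(x,y)$ now equals its own closure $d_G(x,y)^{\nabla\Delta}$. Proposition \ref{prop-embedding} then provides the desired isometric embedding of $G$ into a power of $(\mathbf{N}(\Lambda^*), d_{\mathbf{N}(\Lambda^*)})$, which by construction is the graph $G_{\mathbf{N}(\Lambda^*)}$ equipped with its own zigzag distance.

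For (i) $\Rightarrow$ (iii), the observation is that the zigzag distance between two vertices of a reflexive oriented zigzag $L_w$ is the principal filter $\uparrow w'$ of $\mathbf{F}(\Lambda^*)$ generated by the word $w'$ labelling the subpath between them. Such principal filters trivially satisfy the cancellation rule of Proposition \ref{cancellation} and hence lie in $\mathbf{N}(\Lambda^*)$. Since the zigzag distance in a product of reflexive directed graphs is the meet of coordinate distances and $\mathbf{N}(\Lambda^*)$ is closed under arbitrary meets (as the image of the Galois closure $Y \mapsto Y^{\Delta\nabla}$), an isometric embedding into such a product forces every distance of $G$ to lie in $\mathbf{N}(\Lambda^*)$.

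The core step is (ii) $\Rightarrow$ (i): one must embed $G_{\mathbf{N}(\Lambda^*)}$ isometrically into a product of reflexive oriented zigzags, whence the same holds for any isometric subspace of any power. The guiding idea is that an element $U$ of $\mathbf{N}(\Lambda^*)$, being an upper cone $Y^\Delta$, is completely determined by the set of principal filters $\uparrow w$ containing it, and dually $d_{\mathbf{N}(\Lambda^*)}(U,V)$ is the intersection of those $\uparrow w$ in which it sits. One therefore wants to attach, to each word $w \in \Lambda^*$, a graph homomorphism $\pi_w : G_{\mathbf{N}(\Lambda^*)} \to L_w$ whose induced distance detects precisely whether $w \in d_{\mathbf{N}(\Lambda^*)}(U,V)$; the product of the $\pi_w$ is then forced to be isometric. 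The main obstacle, and where I expect the delicate work to concentrate, is the actual construction of $\pi_w$: one must extract, from an arbitrary vertex $U \in \mathbf{N}(\Lambda^*)$, a coordinate in the \emph{finite} oriented zigzag $L_w$ in a way that respects edges and does not collapse pairs of distances which $w$ is meant to separate. The cancellation rule of Proposition \ref{cancellation} is the essential tool here, and is precisely the property that distinguishes $\mathbf{N}(\Lambda^*)$ from the larger $\mathbf{F}(\Lambda^*)$; it should allow one to verify that the natural combinatorial projection is a well-defined graph homomorphism into $L_w$, which is exactly what fails for general final segments and explains why Theorem IV-3.1 of \cite{jawhari-al} had to be corrected.
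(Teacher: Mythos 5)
Your overall architecture (the cycle (iii) $\Rightarrow$ (ii) $\Rightarrow$ (i) $\Rightarrow$ (iii)) is sound, and the two outer implications are essentially correct: (iii) $\Rightarrow$ (ii) does follow from Proposition \ref{prop-embedding} applied to the Heyting algebra $\mathbf N(\Lambda^*)$, once one checks, as you do, that the triangle inequality survives the passage to the induced operation (monotonicity of the Galois closure plus closedness of each $d_G(x,y)$); and (i) $\Rightarrow$ (iii) follows from the standard --- but in your text unproved --- fact that the zigzag distance between two vertices of a reflexive oriented zigzag is a principal final segment $\uparrow w'$, together with the closure of $\mathbf N(\Lambda^*)$ under arbitrary intersections. (A terminological quibble: the intersection of the coordinate distances is the \emph{join} in $\mathbf F(\Lambda^*)$ ordered by reverse inclusion, not the meet; the mathematical point, that an arbitrary intersection of upper cones is an upper cone, is right.)

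The genuine gap is (ii) $\Rightarrow$ (i), which is where the entire content of the theorem lives and which you describe but do not prove. You correctly reduce it to embedding $G_{\mathbf N(\Lambda^*)}$ itself isometrically into a product of reflexive oriented zigzags, and you correctly identify that this requires, for each word $w$, a graph homomorphism $\pi_w\colon G_{\mathbf N(\Lambda^*)}\to L_w$ such that, for all $U,V$ and every $u\notin d_{\mathbf N(\Lambda^*)}(U,V)$, some coordinate $w$ witnesses $u\notin d_{L_w}(\pi_w(U),\pi_w(V))$. But you give no definition of $\pi_w$, no verification that it preserves edges, and no argument that the family separates distances; you only assert that the cancellation rule of Proposition \ref{cancellation} ``should allow one to verify'' this. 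That is precisely the delicate point: it is what fails for general elements of $\mathbf F(\Lambda^*)$ and what made Theorem IV-3.1 of \cite{jawhari-al} erroneous, as the paper itself warns in the surrounding discussion. Note that the paper does not prove the theorem either --- it defers to Subsection IV-4 of \cite{jawhari-al} and to \cite{saidane, bandelt-pouzet-saidane} --- so there is no in-text argument to compare with; but as it stands your proposal is a correct reduction plus a statement of intent, not a proof.
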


\begin{theorem}\label{thm:ARcompletion}
An oriented graph $G$ is an absolute retract in the category of oriented graphs if and only if it is a retract of a product of  oriented zigzags.\end{theorem}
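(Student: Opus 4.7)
My plan is to transfer the statement to the metric category over the involutive Heyting algebra $\mathbf{N}(\Lambda^*)$ and then invoke Proposition~\ref{equivalenceAR}. The key setup is that the zigzag distance of any oriented graph takes values in $\mathbf{N}(\Lambda^*)$: this is Theorem~\ref{theo:isometric}(iii), which reduces via Proposition~\ref{cancellation} to the cancellation rule, easily verified in any oriented graph since each edge has a unique direction. Thus every oriented graph is an $\mathbf{N}(\Lambda^*)$-metric space, graph homomorphisms correspond to non-expansive maps (the oriented analogue of Lemma~\ref{lem:connexity}), and both products and isometric extensions (when they remain oriented graphs) in the oriented graph category match those in the $\mathbf{N}(\Lambda^*)$-metric category.

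For the direction $\Rightarrow$, if $G$ is an absolute retract in the category of oriented graphs, then by Theorem~\ref{theo:isometric}(i) there is an isometric embedding $G \hookrightarrow P$ into some product $P$ of reflexive oriented zigzags. Since $P$ is itself an oriented graph, the embedding is an isometric extension within the category, and the absolute retract property immediately yields a retraction $P \to G$, so $G$ is a retract of a product of oriented zigzags. For the direction $\Leftarrow$, I would argue via the standard closure properties of absolute retracts (closed under products and retracts by the routine \emph{``retract of an injective is injective''} argument, carried out in the restricted oriented graph category): it suffices to show that every reflexive oriented zigzag is an absolute retract in the oriented graph category. Alternatively, one proceeds indirectly through $G_{\mathbf{N}(\Lambda^*)}$, which by Proposition~\ref{prop-embedding} is hyperconvex and hence an absolute retract in the $\mathbf{N}(\Lambda^*)$-metric category; by Theorem~\ref{theo:isometric}(i) applied to $G_{\mathbf{N}(\Lambda^*)}$ itself, it is a retract of a product of reflexive oriented zigzags. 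Combining with Proposition~\ref{equivalenceAR}, one identifies the class of retracts of products of oriented zigzags with the absolute retracts of the $\mathbf{N}(\Lambda^*)$-metric category restricted to oriented graphs.

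The principal obstacle is precisely this last identification: absolute retracts in the oriented graph category and absolute retracts in the full $\mathbf{N}(\Lambda^*)$-metric category need not coincide automatically, since the latter allows isometric extensions that are not themselves oriented graphs (this is the obstacle flagged in Subsection~\ref{subsection:orientedgraphs}, where one recalls that oriented graphs cannot be modeled over a Heyting algebra in the strongest sense). One must verify that every relevant isometric extension arising in the metric argument can be realized by an oriented graph --- this is where the cancellation rule of Proposition~\ref{cancellation} plays its role --- and separately, that each individual oriented zigzag possesses the required injectivity in the restricted category, even when it fails to be hyperconvex in the full $\mathbf{N}(\Lambda^*)$-metric category. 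The latter is a combinatorial verification, using the rigidity imposed by the uniqueness of edge orientations to construct retractions explicitly. Once these points are in place, the remainder of the proof is a routine packaging of Theorem~\ref{theo:isometric} and Proposition~\ref{equivalenceAR}.
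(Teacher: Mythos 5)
Your forward direction rests on the claim that the zigzag distance of \emph{any} oriented graph takes values in $\mathbf{N}(\Lambda^*)$, because the cancellation rule of Proposition~\ref{cancellation} is ``easily verified in any oriented graph since each edge has a unique direction.'' This is false, and it is exactly where the real content of the theorem lies. The paper states, just before Theorem~\ref{thm:ARcompletion}, that an oriented cycle cannot be isometrically embedded into a power of $G_{\mathbf{N}(\Lambda^*)}$; by Theorem~\ref{theo:isometric} this means its distance values do not all lie in $\mathbf{N}(\Lambda^*)$. Concretely, for the reflexive oriented $3$-cycle $a\to b\to c\to a$ one checks that both $+-$ and $--$ belong to $d(a,b)$ (route through $b$ using its loop, respectively through $c$), while $-$ does not since there is no edge from $b$ to $a$; so the cancellation rule fails with $u=\Box$, $v=-$. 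Hence membership of the distances in $\mathbf{N}(\Lambda^*)$ is not a feature of oriented graphs in general: it must be \emph{deduced from the absolute retract hypothesis}. That deduction is the substance of the paper's argument, which proceeds in three steps: first show that an absolute retract has no $3$-element cycle, then show that its zigzag distance satisfies the cancellation rule, and only then invoke Proposition~\ref{cancellation} and Theorem~\ref{theo:isometric} to embed $G$ isometrically into a product of oriented zigzags, of which it is a retract by hypothesis. Your proposal skips the first two steps entirely, so the forward direction as written does not go through.

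For the converse, your outline (closure of absolute retracts under products and retracts, plus injectivity of each oriented zigzag within the oriented category) has the right shape, and you are right to flag that hyperconvexity in the full $\mathbf{N}(\Lambda^*)$-metric category is not available here; the paper likewise defers the details of this part to \cite{saidane} and \cite{bandelt-pouzet-saidane}. But the missing derivation of the cancellation rule (and of the absence of $3$-cycles) from the absolute retract hypothesis is a genuine gap, not a packaging issue.
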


 We just give a sketch. For details,  see Chapter V of \cite{saidane} and the  forthcoming paper of Bandelt, Pouzet, Sa\"{\i}dane \cite{bandelt-pouzet-saidane}. The proof has three  steps. Let $G$ be an absolute retract. First, one proves that $G$ has no $3$-element cycle. Second, one proves that the zigag distance between two vertices of $G$ satisfies the cancellation rule. From Proposition \ref{cancellation},  it belongs to $\mathbf{N}(\Lambda^*)$; from Theorem \ref{theo:isometric}, $G$ isometrically embeds into a product of oriented zigzags. Since $G$ is an absolute retract, it is a retract of that product.

As illustrated by the results of Tarski and Sine and Soardi, absolute retracts are appropriate candidates for the fixed point property. Reflexive graphs with the fixed point property must be antisymmetric, i.e., oriented. Having described absolute retracts among oriented graphs,  we derive from Theorem \ref{thm:cor3} that the  bounded ones have the fixed point property. 

We start with a characterization of accessible elements of $\mathbf N({\Lambda^*})$. The proof is omitted. 
\begin{lemma} \label{lem-accessible}Every element $v$ of $\mathbf N({\Lambda^*})\setminus \{\Lambda^{\ast}, \emptyset\}$ is accessible.
\end{lemma}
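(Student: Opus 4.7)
The plan is to reduce the accessibility of $v$ to the combinatorial task of finding a single word $u \in \Lambda^{*}$ with $u \notin v$ and $u\overline{u} \in v$. Once such $u$ is in hand, I would take $r := \uparrow u = \{u\}^{\Delta}$, which lies in $\mathbf{N}(\Lambda^{*})$ as a principal upper cone. Then $r \cdot \overline{r} = \uparrow u \cdot \uparrow \overline{u} = \uparrow(u\overline{u}) \subseteq v$ (because $u\overline{u} \in v$ and $v$ is a final segment), while $u \in r \setminus v$; under the reverse-inclusion ordering these translate to $v \leq r \oplus \overline{r}$ and $v \not\leq r$, witnessing accessibility.

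To locate $u$, I first note that $\Box \notin v$ (as $v \neq \Lambda^{*}$) and, by Higman's lemma applied to the finite alphabet $\Lambda$, the nonempty final segment $v$ has a finite antichain of minimal elements, so $m := \min\{|w| : w \in v\}$ is a positive integer. The plan is to split on whether some length-$m$ element of $v$ uses both letters. In the non-monochromatic case, I pick a shortest word $w^{*} = a_{1}\cdots a_{m}$ which uses both letters and set $u := a_{1}\cdots a_{m-1}$. The length bound $|u| < m$ immediately gives $u \notin v$, and the embedding of $w^{*}$ into $u\overline{u} = a_{1}\cdots a_{m-1}\,\overline{a_{m-1}}\cdots\overline{a_{1}}$ is obtained by matching its prefix to $a_{1}\cdots a_{m-1}$ and then matching $a_{m}$ to an occurrence of $\overline{a_{m}}$ in the reversed suffix, which exists precisely because $w^{*}$ contains both letters.

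The main obstacle is the remaining case, where every length-$m$ element of $v$ is monochromatic. The crucial step is to rule out $+^{m}$ and $-^{m}$ simultaneously belonging to $v$, and this is where the cancellation rule of Proposition \ref{cancellation} is essential. Assuming both, cancellation applied to $(+^{m-1},-^{m-1})$ gives $+^{m-1}-^{m-1} \in v$ (since $+^{m}-^{m-1}$ and $+^{m-1}-^{m}$ are super-words of $+^{m}$ and $-^{m}$ respectively); then, inductively, from $+^{k+1}-^{m-1} \in v$, cancellation applied to $(+^{k},-^{m-1})$ yields $+^{k}-^{m-1} \in v$, using that $+^{k}-^{m}$ is always a super-word of $-^{m}$. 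Iterating down to $k = 0$ produces $-^{m-1} \in v$, contradicting the minimality of $m$. So, up to swapping $+$ and $-$, I may assume $+^{m} \in v$ and $-^{m} \notin v$; the monochromaticity hypothesis then makes $+^{m}$ the unique length-$m$ element of $v$. The choice $u := +^{m-1}-$ does the job: $|u| = m$ and $u \neq +^{m}$ give $u \notin v$, while $u\overline{u} = +^{m-1}\,-\,+\,-^{m-1}$ has exactly $m$ plus signs, so $+^{m} \sqsubseteq u\overline{u}$ and $u\overline{u} \in v$. This produces the required $u$ in all cases, and $r = \uparrow u$ witnesses the accessibility of $v$.
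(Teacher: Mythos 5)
Your proof is correct; note that the paper itself omits the proof of this lemma, so there is no in-text argument to compare yours against --- what you supply is a complete, self-contained justification. The reduction to producing one word $u$ with $u\notin v$ and $u\overline u\in v$ is sound: $r:=\uparrow u=\{u\}^{\Delta}$ is a principal upper cone, hence lies in $\mathbf N(\Lambda^{*})$; one has $\overline{\uparrow u}=\uparrow\overline u$ and $\uparrow u\cdot\uparrow\overline u\subseteq\uparrow(u\overline u)\subseteq v$, so under reverse inclusion $v\leq r\oplus\overline r$ while $u\in r\setminus v$ gives $v\not\leq r$. The case analysis on the minimal length $m$ is also complete: in the non-monochromatic case the embedding of $w^{*}$ into $u\overline u$ works because some $a_i$ with $i\leq m-1$ equals $\overline{a_m}$, so $a_m$ reappears in the reversed suffix; in the monochromatic case your descending chain of applications of the cancellation rule of Proposition \ref{cancellation} correctly rules out $+^{m}$ and $-^{m}$ both lying in $v$ (ending at ${-}^{m-1}\in v$, or $\Box\in v$ when $m=1$, against minimality of $m$, resp.\ $v\neq\Lambda^{*}$), the swap of $+$ and $-$ is legitimate since letterwise exchange is an automorphism of $\mathbf F(\Lambda^{*})$ preserving $\mathbf N(\Lambda^{*})$ and the involution, and the witness $u={+}^{m-1}{-}$ indeed satisfies $u\notin v$ while $u\overline u={+}^{m-1}{-}{+}{-}^{m-1}$ contains ${+}^{m}$ as a subword. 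I find no gap.
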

\begin{theorem}\label{thm:cor4}
If a graph $G$, finite or not, is a retract of a product of reflexive and directed zigags of bounded length then every commuting set of endomorphisms has a common fixed point.
\end{theorem}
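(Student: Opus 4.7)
The plan is to interpret $G$ as a generalized metric space over the Heyting algebra $\mathbf{F}(\Lambda^{*})$ equipped with the zigzag distance $d_G$, and then to invoke Theorem~\ref{thm:cor3}. By the lemma identifying reflexive-directed-graph homomorphisms with non-expansive maps for the zigzag distance, commuting endomorphisms of $G$ are precisely commuting non-expansive self-maps of $(G,d_G)$. Hence the task reduces to verifying that $(G,d_G)$ is both bounded and hyperconvex as a metric space over $\mathbf{F}(\Lambda^{*})$.

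For hyperconvexity, I would first argue that every reflexive directed zigzag is itself hyperconvex. This is the directed-graph counterpart of Theorem~\ref{thm:ARcompletion} and says that a zigzag is an absolute retract in its own category; equivalently, by Proposition~\ref{equivalenceAR}, it is hyperconvex. Hyperconvexity is preserved under direct products, because intersections of balls decompose coordinatewise, so both the $2$-Helly property and convexity lift from the factors to the product. Finally, hyperconvexity is preserved under retracts, again by Proposition~\ref{equivalenceAR}. Consequently $G$, being a retract of a product of hyperconvex spaces, is itself hyperconvex.

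For boundedness, use the length bound. If $N$ uniformly bounds the lengths of the zigzag factors, then the diameter of each factor is a nonempty final segment of $\Lambda^{*}$ generated by words of length at most $N$; in particular it is strictly above the top element $\emptyset$ in the Heyting-algebra order (equivalently, it is not the ``infinite'' distance). The diameter of the product is the join in $\mathbf{F}(\Lambda^{*})$ of those factor diameters, which stays a nonempty final segment since $N$ is uniform. Because retractions are non-expansive, the diameter of $G$ is no larger than that of the product, hence is again different from $\emptyset$. Now by Lemma~\ref{lem-accessible} (or its analog in $\mathbf{F}(\Lambda^{*})$), the only inaccessible elements of the relevant Heyting algebra are $0$ and $\emptyset$, so the only inaccessible element lying at or below the diameter of $G$ is $0$ itself. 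This is precisely the definition of boundedness.

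With $G$ shown to be bounded and hyperconvex, Theorem~\ref{thm:cor3} yields a common fixed point for every commuting family of non-expansive self-maps, and hence, via the identification of endomorphisms with non-expansive self-maps, a common fixed point for every commuting set of endomorphisms of $G$. The main obstacle in executing this plan is the first step of the hyperconvexity argument: the direct verification that each reflexive directed zigzag satisfies $2$-Helly and convexity in $\mathbf{F}(\Lambda^{*})$ is combinatorially delicate because the distance between two vertices of a zigzag is itself a rather intricate final segment of $\Lambda^{*}$. Closely tied to this is the choice between $\mathbf{F}(\Lambda^{*})$ and the MacNeille completion $\mathbf{N}(\Lambda^{*})$ as the ambient Heyting algebra; if ``directed'' is read in the strict (non-oriented) sense, then one needs to confirm that the counterpart of Lemma~\ref{lem-accessible} indeed holds in $\mathbf{F}(\Lambda^{*})$, which should be routine from the subword ordering but must be stated explicitly.
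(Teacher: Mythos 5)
Your proposal follows essentially the same route as the paper: the paper's proof likewise observes that the diameter of $G$ lies in $\mathbf N(\Lambda^{*})\setminus\{\Lambda^{*},\emptyset\}$ and is therefore accessible by Lemma~\ref{lem-accessible} (so $G$ is bounded), that $G$ is hyperconvex as a retract of a product of hyperconvex spaces, and then applies Theorem~\ref{thm:cor3}. Your version is somewhat more explicit about where the uniform length bound enters and correctly flags the one point the paper leaves implicit, namely that each reflexive zigzag is itself hyperconvex.
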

\begin{proof}
We may suppose that $G$ has more than one vertex. The diameter of $G$ equipped with the zigzag distance belongs to $\mathbf N_{\Lambda^*}\setminus \{\Lambda^{\ast}, \emptyset\}$. According to  Lemma \ref{lem-accessible},  it is accessible, hence as a metric space,  $G$ is bounded. Being a retract of a product of hyperconvex metric spaces it is hyperconvex. Theorem \ref{thm:cor3} applies.
\end{proof}

If we  consider  zigzags of length one we get Tarski's fixed point theorem \cite{tarski}.

\subsection{An illustration: the freeness of $\mathbf F(A^*)$ and $\mathbf N(A^{*})$}

Instead of a two letter alphabet, we consider un arbitrary alphabet $A$, not necessarily finite. We suppose  that the alphabet $A$ is
ordered. We
order  $A^{\ast }$ with the \emph{Higman ordering }: if $\alpha $ and
$\beta $ are two elements in $A^{\ast }$ such $\alpha: =a_{0}\cdots 
a_{n-1}$ and $\beta: =b_{0}\cdots  b_{m-1}$ then $\alpha \leq \beta$
if there is an injective and increasing map $h$ from $\left\{
0,...,n-1\right\} $ to $\left\{ 0,...,m-1\right\}$ such that for each $i$, $0\leq i\leq n-1$, we have $a_{i}\leq b_{h\left( i\right) }$. Then
$A^{\ast }$ is an ordered monoid with respect to the concatenation
of words.  
A \emph{\emph{final segment}} of $A^{\ast}$ is any subset $F\subseteq A^{\ast}$ such that $\alpha \leq
\beta,\alpha \in F$ implies $\beta \in F$.  Initial segments are defined dually.  Let $X$ be a subset of 
$A^{\ast}$; then $$\uparrow X:= \{ \beta  \in A^{\ast}: \alpha \leq \beta\;  \text{for some}\;  \alpha\in X\}$$  is the {\it upper set} generated by $X$ and 
$$\downarrow X := \{\alpha  \in A^{\ast}:  \alpha \leq \beta \;  \text{for some}\;  \beta\in X\}$$   is the {\it lower set} generated by $X$.

 Let $\mathbf F(A^{\ast})$ be the set of final segments of $A^{\ast}$. The  concatenation of  words extends to $\powerset (A^{\ast})$; this operation defined by $XY:= \{\alpha\beta: \alpha\in X, \beta\in Y\}$ induces an operation on $\mathbf F(A^{\ast})$  for which the set $A^{\ast}$ is neutral. Hence $\mathbf F(A^{\ast})$ is a monoid.  Since it contains the empty set $\emptyset$ and $\emptyset$ has several decompositions (e.g. $\emptyset=\emptyset A^{\ast}= A^{\ast}\emptyset $), this monoid is not free.  Let  $\mathbf F^{\circ}(A^{\ast}):= \mathbf F(A^{\ast})\setminus \{\emptyset \}$ be the set of non-empty final segments of $A^{\ast}$.  This is submonoid of $\mathbf F(A^{\ast})$.%

\begin{theorem}\label{prop1}
 $\mathbf F^{\circ}(A^{\ast})$ is a free monoid.
 \end{theorem}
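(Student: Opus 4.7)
The plan is to combine a natural length function on $\mathbf{F}^{\circ}(A^{\ast})$, giving existence of factorisations into indecomposables, with a direct proof of Levi's Lemma, giving their uniqueness.

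First I introduce $\ell : \mathbf{F}^{\circ}(A^{\ast}) \to \mathbb{N}$ defined by $\ell(F) := \min\{|\alpha| : \alpha \in F\}$. A shortest word in $FG$ is obtained by concatenating shortest words of $F$ and $G$, so $\ell(FG) = \ell(F) + \ell(G)$; and $\ell(F) = 0$ iff $\epsilon \in F$ iff $F = A^{\ast}$. Hence $\ell$ is a positive $\mathbb{N}$-grading with trivial kernel, and every $F \neq A^{\ast}$ admits by induction on $\ell(F)$ a factorisation into \emph{indecomposable} elements---those not of the form $GH$ with $G, H \in \mathbf{F}^{\circ}(A^{\ast}) \setminus \{A^{\ast}\}$.

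Next I prove Levi's Lemma: if $FG = HK$ with $\ell(F) \leq \ell(H)$, then there exists $J \in \mathbf{F}^{\circ}(A^{\ast})$ with $H = FJ$ and $G = JK$. I propose
$$J := \{\, j \in A^{\ast} : Fj \subseteq H \text{ and } jK \subseteq G \,\},$$
a final segment satisfying $FJ \subseteq H$ and $JK \subseteq G$ by construction. For the reverse inclusion $H \subseteq FJ$, fix $h \in H$ and pick $k_{0} \in K$ of minimal length; from $hk_{0} \in HK = FG$, the bound $|g| \geq \ell(G)$, and the identity $\ell(G) - \ell(K) = \ell(H) - \ell(F) \geq 0$, any decomposition $hk_{0} = fg$ with $f \in F$, $g \in G$ satisfies $|f| \leq |h|$, so $f$ is a literal prefix of $h$. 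Let $f$ be the \emph{shortest} such prefix and write $h = fh'$. I then verify $h' \in J$: for $Fh' \subseteq H$, given $f'' \in F$, the product $f''h'k_{0} \in HK$ decomposes as $h''k''$, the length bound $|k''| \geq \ell(K)$ forces $h''$ to be a literal prefix of $f''h'$, and a two-case split on $|h''| \leq |f''|$ versus $|h''| > |f''|$ deduces $f''h' \in H$ from the upward-closure of $H$. For $h'K \subseteq G$, given $k \in K$, decompose $hk = f'g'$ in $FG$: by the minimality of $f$, either $f' = f$ (and then $h'k = g' \in G$ directly) or $|f'| > |f|$ (and then $g'$ is a Higman-subword of $h'k$, whence $h'k \in G$ by upward-closure). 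The inclusion $G \subseteq JK$ follows by the symmetric argument using the shortest suffix of $g$ in $K$.

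The main difficulty is the case analysis just outlined: one must carefully juggle the length inequality $\ell(K) \leq \ell(G)$, the Higman-upward-closure of all four of $F, G, H, K$, and the minimality of the chosen prefix $f$ (respectively suffix $k$), to control where prefixes fall within products. With Levi's Lemma in hand, the standard ``compare first indecomposable factors'' induction on $\ell$ shows that a positively $\mathbb{N}$-graded monoid with trivial kernel satisfying Levi's Lemma is freely generated by its indecomposables, completing the proof that $\mathbf{F}^{\circ}(A^{\ast})$ is free.
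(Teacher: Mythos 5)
Your argument is correct, but it follows a genuinely different route from the paper's. The paper obtains Theorem \ref{prop1} as a corollary of Theorem \ref{thm:blockdecomposition}: one attaches to each nonempty final segment $F$ the injective envelope $\mathcal S_F$ of the two-element metric space at distance $F$, and reads off the canonical factorisation of $F$ from the path of blocks of the associated graph $\mathcal G_F$, irreducibility of $F$ corresponding to the absence of a cut vertex. Your proof replaces this metric machinery by the classical criterion that a monoid carrying an $\N$-grading with trivial kernel and satisfying Levi's Lemma (equidivisibility) is free, and verifies equidivisibility by hand from the Higman upward-closure of final segments; this is elementary and self-contained, at the price of losing the structural information (the block decomposition and the description of the irreducible factors) that the paper's approach delivers. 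One step of your verification deserves an extra line: in showing $h'K\subseteq G$ you assert that minimality of $f$ forces $|f'|\geq |f|$ in any factorisation $hk=f'g'$, but the minimality was taken over factorisations of $hk_0$, not of $hk$. The gap closes because if $|f'|<|f|$ and $f=f'w$, then $wh'k_0$ admits $h'k_0=g\in G$ as a suffix, hence lies in $G$, so $hk_0=f'\cdot(wh'k_0)$ is a competing factorisation of $hk_0$ with a strictly shorter first factor, contradicting minimality; with this observation (and its mirror image for $G\subseteq JK$) the whole argument goes through.
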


 The following  illustration of Theorem \ref{prop1} was proposed to us by J.Sakarovitch. An \emph{\emph{antichain} } of  $A^{\ast}$ is any subset $X$ of $A^{\ast}$ such that any two distinct elements $\alpha$ and $\beta$ of $X$ are incomparable w.r.t. the Higman ordering. The set  $Ant (A^{\ast})$ of antichains of $A^{\ast}$ and the set $Ant_{<\omega}(A^{\ast})$ of finite antichains of $A^{\ast}$ are  submonoids of $\powerset( A^\ast)$; the sets $Ant^{\circ} (A^{\ast}):= Ant (A^{\ast})\setminus \{\emptyset\}$ and 
 $Ant_{<\omega}^{\circ}(A^{\ast}):= Ant_{<\omega}(A^{\ast})\setminus \{\emptyset\}$ of non-empty antichains are also submonoids. 
 From Theorem \ref{prop1}, we deduce:
 \begin{theorem} \label{antichainfree}
 The monoids  $Ant^{\circ} (A^{\ast})$ and $Ant^{\circ}_{<\omega}(A^{\ast})$  are free. 
 \end{theorem}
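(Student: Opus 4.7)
The plan is to derive Theorem \ref{antichainfree} from Theorem \ref{prop1} by transferring freeness via the upward-closure map, then handle the finitely-generated case with a short cancellation argument.

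First, I would define $\Phi : Ant^{\circ}(A^{\ast}) \to \mathbf F^{\circ}(A^{\ast})$ by $\Phi(X) := \uparrow X$ and verify that it is an injective monoid homomorphism. The identity $\uparrow(XY) = \uparrow X \cdot \uparrow Y$ follows from monotonicity of concatenation in the Higman ordering ($x \leq \alpha$ and $y \leq \beta$ imply $xy \leq \alpha\beta$), and the unit $\{\Box\}$ maps to $A^{\ast}$. For injectivity one uses the identity $\Min(\uparrow X) = X$ valid for every antichain $X$: elements of $X$ are automatically minimal in $\uparrow X$ (by the antichain condition), and conversely every minimal element lies in $X$ by antisymmetry of Higman's order.

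Second, to transfer freeness to $Ant^{\circ}(A^{\ast})$, existence of an indecomposable factorization for any $X \in Ant^{\circ}(A^{\ast})$ is immediate because the number of non-trivial factors in any factorization of $X$ is bounded by $\min_{\alpha \in X}|\alpha|$. For uniqueness I would use the sub-lemma that $\Phi$ carries indecomposables of $Ant^{\circ}(A^{\ast})$ to indecomposables of $\mathbf F^{\circ}(A^{\ast})$. Given a putative factorization $\uparrow Y = F_1 F_2$ with $F_j \neq A^{\ast}$, each $\alpha \in Y$ factors as $\alpha = \alpha_1 \alpha_2$ with $\alpha_j \in F_j$, and one shows $\alpha_j \in \Min(F_j)$ (otherwise shortening $\alpha_1$ or $\alpha_2$ produces an element of $\uparrow Y$ strictly below $\alpha$, contradicting $\alpha \in Y = \Min(\uparrow Y)$). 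Setting $Y_j := \{\mu \in \Min(F_j) : \exists\, \nu \in \Min(F_{3-j}),\ \mu\nu \in Y\}$ yields nontrivial antichains with $Y = Y_1 Y_2$, contradicting the indecomposability of $Y$. Combined with uniqueness in $\mathbf F^{\circ}(A^{\ast})$ (Theorem \ref{prop1}) and injectivity of $\Phi$, this yields uniqueness of factorization in $Ant^{\circ}(A^{\ast})$, hence freeness.

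Third, for $Ant^{\circ}_{<\omega}(A^{\ast})$ I would prove the following cancellation lemma: if $X \in Ant^{\circ}_{<\omega}(A^{\ast})$ and $X = YZ$ in $Ant^{\circ}(A^{\ast})$, then both $Y$ and $Z$ are finite. Fixing any $z \in Z$, the map $Y \to X$, $y \mapsto yz$, is well-defined (since $YZ = X$ forces $yz \in X$) and injective by cancellativity in the free monoid $A^{\ast}$, so $|Y| \leq |X| < \infty$; symmetrically $|Z|$ is finite. Applied inductively to the unique factorization of a finite antichain into indecomposables of $Ant^{\circ}(A^{\ast})$, every factor is a finite indecomposable antichain. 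Hence $Ant^{\circ}_{<\omega}(A^{\ast})$ is a free submonoid of $Ant^{\circ}(A^{\ast})$, generated by the finite indecomposable antichains.

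The main obstacle is the sub-lemma in the second step: verifying $Y_1 Y_2 \subseteq Y$ (the reverse inclusion being routine) requires showing that for any $\mu \in Y_1$ and $\nu \in Y_2$, the product $\mu\nu$ is actually minimal in $\uparrow Y$, not merely a member of it. A careful analysis of the possible Higman embeddings of a hypothetical smaller witness $\gamma \in Y$ into $\mu\nu$, comparing the $A^{\ast}$-cut $\gamma = \tilde\gamma_1\tilde\gamma_2$ (with $\tilde\gamma_j \in \Min(F_j)$) to the Higman-cut $\gamma = \gamma_{\text{left}}\gamma_{\text{right}}$ with $\gamma_{\text{left}} \leq \mu$ and $\gamma_{\text{right}} \leq \nu$, uses minimality on both sides to force the two cuts to coincide and thereby $\gamma = \mu\nu$, closing the argument.
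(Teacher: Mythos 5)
Your proposal is correct and follows the route the paper intends: the paper offers no details beyond ``From Theorem \ref{prop1}, we deduce,'' and your argument supplies exactly the needed deduction, embedding $Ant^{\circ}(A^{\ast})$ into $\mathbf F^{\circ}(A^{\ast})$ via $X\mapsto\, \uparrow X$ and, crucially, verifying that indecomposable antichains map to irreducible final segments (the non-obvious point, since a submonoid of a free monoid need not be free). Your cancellation argument for the finite-antichain case is also correct, so the write-up stands as a complete proof of the statement.
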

 
\subsubsection{Well-quasi-ordered alphabets.}
 Note that if $A$ is \emph{\emph{well-quasi-ordered} }(w.q.o)(that is to say that every 
  final segment of $A$ is finitely generated)  then the  monoids $Ant (A^{\ast})$ and $Ant_{<\omega}(A^{\ast})$ are equal and isomorphic to the monoid $F(A^{\ast})$, thus Theorem  \ref{antichainfree} reduces to  Theorem \ref{prop1}. Indeed, if $A$ is w.q.o. then, according to a famous result of Higman \cite {higman} 1952, $A^{\ast}$ is w.q.o. too,  that is  every final segment $F$ of $A^{*}$ is generated by $Min (F)$ the set of minimal elements of $F$. Since $Min(F)$ is an antichain and in this case a finite one, our claim follows.

\subsubsection{The MacNeille completion of $A^{\ast}$.}
 
  Let $\mathbf N(A^{\ast})$ be the MacNeille completion of the poset $A^{\ast}$, that we may view as the collection of intersections of principal final segments of $A^{\ast}$. The MacNeille completion of $\mathbf N(A^{\ast})$ is a submonoid of $\mathbf F(A^{\ast})$. From Theorem \ref{prop1}, we derive: 
\begin{theorem}\label{thm:completionfree} Let $A$ be an ordered alphabet. The monoid $\mathbf N^{\circ}(A^{\ast}):= \mathbf N(A^{\ast})\setminus \{\emptyset\}$ is free.
\end{theorem}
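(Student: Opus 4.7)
The plan is to exhibit $\mathbf{N}^\circ(A^*)$ as a submonoid of the free monoid $\mathbf{F}^\circ(A^*)$ from Theorem \ref{prop1} whose atomic factorization is inherited from that of $\mathbf{F}^\circ$. The central tool is a formula relating MacNeille closures to products: for any $G, H \in \mathbf{F}(A^*)$, I would first establish $\widehat{GH} = \widehat{G}\cdot\widehat{H}$, where $\widehat{X}$ denotes the smallest element of $\mathbf{N}(A^*)$ containing $X$. One direction follows from the product formula $(\bigcap_i \uparrow\alpha_i)\cdot(\bigcap_j \uparrow\beta_j) = \bigcap_{i,j} \uparrow(\alpha_i\beta_j)$, since $\widehat{G}\widehat{H}$ is then an intersection of principal filters containing $GH$. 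The other direction relies on a min-max splitting argument: given any common lower bound $\zeta$ of $GH$ (i.e., $\zeta \leq gh$ for all $g \in G, h \in H$), one must produce a single split $\zeta = \zeta'\zeta''$ with $\zeta'$ a common lower bound of $G$ and $\zeta''$ a common lower bound of $H$. Letting $p_g$ be the largest prefix length of $\zeta$ embedding in $g$ and $q_h$ the smallest suffix start of $\zeta$ embedding in $h$, the embedding certifying $\zeta \leq g^*h^*$ for extremal $g^*, h^*$ provides a common split point in $[\max_h q_h, \min_g p_g]$.

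With this formula in hand, the key lemma is: if $F = GH \in \mathbf{N}^\circ$ with $G, H \in \mathbf{F}^\circ$, then $G, H \in \mathbf{N}^\circ$. Since $F = \widehat{GH} = \widehat{G}\widehat{H}$, the equality $GH = \widehat{G}\widehat{H}$ holds in the free monoid $\mathbf{F}^\circ$, whose uniqueness of atomic factorization forces the atomic sequences on each side to coincide. If $\widehat{G}$ contained strictly more atoms than $G$ (so that $\widehat{G} = G \cdot U$ with $U \neq A^*$), then every element of $\widehat{G}$ is of the form $gu$ with $g \in G, u \in U$; since $gu \geq g$ in the Higman order and $G$ is a final segment, $gu \in G$, whence $\widehat{G} \subseteq G$. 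Combined with the MacNeille inclusion $G \subseteq \widehat{G}$, this forces $G = \widehat{G} = GU$, and cancellation in the free monoid $\mathbf{F}^\circ$ yields $U = A^*$, a contradiction. The symmetric case of $\widehat{H}$ having more atoms than $H$ is dispatched analogously on the $H$-side.

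Iterating the key lemma on the $\mathbf{F}^\circ$-atomic factorization of each $F \in \mathbf{N}^\circ$, every atomic factor lies in $\mathbf{N}$. The atoms of $\mathbf{N}^\circ$ are therefore precisely the atoms of $\mathbf{F}^\circ$ lying in $\mathbf{N}$, and the unique factorization of $\mathbf{N}^\circ$-elements into such atoms descends directly from the unique factorization in $\mathbf{F}^\circ$. The main obstacle is the reverse inclusion in the formula $\widehat{GH} = \widehat{G}\widehat{H}$ above: producing a \emph{uniform} split of a common lower bound $\zeta$ that works simultaneously for every element of $G$ and every element of $H$, rather than a split tailored to a specific pair $(g, h)$, is the technical heart of the proof.
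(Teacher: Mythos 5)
Your argument is correct and follows the same route the paper indicates: reduce to the freeness of $\mathbf F^{\circ}(A^{\ast})$ (Theorem \ref{prop1}), the paper itself deferring all details of this reduction to the references. Your multiplicativity formula $\widehat{GH}=\widehat{G}\cdot\widehat{H}$ for the MacNeille closure, proved via the min--max choice of a uniform split point of a common lower bound, together with the atom-counting step in the cancellative free monoid $\mathbf F^{\circ}(A^{\ast})$, correctly supplies the missing content; the one ingredient you quote without proof (the product formula for intersections of principal filters, essentially Proposition 2.2 of the cited Bandelt--Pouzet paper, which gives that $\mathbf N(A^{\ast})$ is closed under concatenation) follows from the same min--max device applied on the upper side.
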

We recall that a member $F$ of $\mathbf F(A^{\ast})$ is  \emph{irreducible} if it is distinct from $A^{\ast}$ and  is not the concatenation of two members  of $\mathbf F(A^{\ast})$ distinct of $F$ (note that with this definition, the empty set is irreducible). For an example, if  $F= \uparrow \{u, v\}$ with  $u$  incomparable to $v$, then $F$ is  irreductible iff  $u$ and $v$ do not have a common prefix nor a common suffix.
 The fact that $\mathbf F^{\circ}(A^{\ast})$ is free amounts to the fact that each  member decomposes in a unique way as a concatenation of  finitely many irreducible elements. 
  
\subsubsection{An interpretation}

We interpret the freeness of these monoids  by means of injective envelopes of $2$-element metric spaces. 


We suppose that $A$ equipped with an involution (this is not a restriction: we may choose the identity on $A$ as our involution). We consider  a notion of metric spaces with values in $\mathbf F(A^{\ast})$. The category of these spaces over $\mathbf F(A^{\ast})$, with the non-expansive maps as morphisms,  has enough injectives (meaning that every metric space extends isometrically to an injective one).  For every  final segment $F$ of $A^{\ast}$, 
 the $2$-element space metric space $E:= (\{
x,y\}, d)$ such that $d(x,y)=F$,  has an \emph{{injective envelope}}\;  $\mathcal S_F$ (that is a minimal extension to an injective metric space).

Since $\mathcal S_F$ is injective, corresponds  to it a  transition system  $\mathcal {M}_{F}$ on the alphabet $A$, with transitions $(p, a, q)$ if $a \in d(p,q)$. The automaton  $\mathcal{A}_{F}:=  ({\mathcal M}_{F},\left\{ x\right\}, \left\{y\right\} )$ with $x$ as initial state and $y$ as final state accepts $F$.  A transition system yields a directed graph  whose arcs are the ordered pairs $(x,y)$ linked by a transition. The transition system $\mathcal {M}_{F}$ being reflexive and involutive,  the corresponding  graph $\mathcal {G}_{F}$ is undirected and has  a loop  at every vertex.  For an example, if $F= A^{\ast}$, $\mathcal S_F$ is the one-element metric space and $\mathcal {G}_{F}$ reduces to a loop. If $F= \emptyset$, $\mathcal S_F$ is the two-elements  metric space $E:= (\{x, y\}, d)$ with $d(x,y)= \emptyset$ and $\mathcal {G}_{F}$ has no edge.

The gluing of two injectives by a common vertex  yields an injective; we will  say that an injective which is not the gluing of two proper injectives is \emph{irreducible}.

With the notion of cut vertex and block borrowed from graph theory, Kabil, Pouzet and  Rosenberg \cite{KPR} proved: 

\begin{theorem}\label{thm:blockdecomposition}
Let $F$ be a  final segment of $A^{\ast}$, distinct from $A^{\ast}$. Then $F$ is irreducible if and only if  $\mathcal S_F$ is irreducible if and only if $\mathcal {G}_F$ has no cut vertex. If $F$ is not irreducible, 
the blocks of $\mathcal {G}_F$ are  the vertices of a finite path $C_0, \dots, C_{n-1}$ with $n\geq 2$,  whose  end vertices  $C_0$ and $C_{n-1}$ contain respectively  the initial state $x$ and the final state $y$ of the automaton $\mathcal{A}_{F}$ accepting $F$. Furthermore, $F=F_0 \dots  F_i \dots  F_{n-1}$, the automaton $\mathcal{A}_{F_i}$ accepting $F_i$  being isomorphic to  $({\mathcal M}_{F}\restriction C_{i},\left\{ x_i\right\}, \left\{
y_i\right\} )$, where $x_i:=x$ if $i=0$, $y_{i}= y$ if $i=n-1$ and  $\{x_i\}=C_{i-1} \cap C_{i}$, $\{y_i\}= C_{i} \cap C_{i+1}$, otherwise. 
\end{theorem}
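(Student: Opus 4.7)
The plan is to establish the three equivalences by setting up the correspondence between one-point gluings of injective metric spaces and concatenations of accepted final segments, and then to derive the block-path structure from minimality of the injective envelope.

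First, I would prove $F$ irreducible $\iff$ $\mathcal{S}_F$ irreducible. If $F = F_1 \cdot F_2$ with neither $F_i$ equal to $A^{\ast}$, form the one-point gluing of $\mathcal{S}_{F_1}$ and $\mathcal{S}_{F_2}$ at the common endpoint and verify, using $2$-ball intersection on each side independently (via Proposition \ref{equivalenceAR}), that the result is an injective metric space isometrically extending $E = (\{x,y\},d)$ with $d(x,y)=F$; by uniqueness of the injective envelope (Proposition \ref{prop:injectiveenvelope}), $\mathcal{S}_F$ sits inside this gluing as a retract, and minimality forces equality, so $\mathcal{S}_F$ is reducible. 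Conversely, a proper gluing decomposition of $\mathcal{S}_F$ at a vertex $z$ yields $F = d(x,z) \oplus d(z,y)$, a proper factorization in $\mathbf{F}(A^{\ast})$.

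Next, I would show $\mathcal{S}_F$ irreducible $\iff$ $\mathcal{G}_F$ has no cut vertex. A cut vertex $z$ lying on an $x$-$y$-walk in $\mathcal{G}_F$ gives a gluing decomposition of $\mathcal{S}_F$ at $z$, since every such walk crosses $z$, forcing $d(x,y) = d(x,z) \oplus d(z,y)$; conversely, a gluing vertex is evidently a cut vertex. The supporting lemma, which I expect to be the main obstacle, is an \emph{essentialness} claim: every vertex of $\mathcal{G}_F$ lies on some $x$-$y$-walk. This should follow from minimality of $\mathcal{S}_F$ as an injective envelope, since the subspace obtained by deleting a non-essential vertex would still be an injective isometric extension of $E$ (injectivity being checked via $2$-ball intersection together with the fact that deleted-vertex balls are irrelevant to $x,y$), contradicting minimality.

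For the block decomposition when $F$ is reducible, the essentialness lemma combined with the structure of the block-cut tree of a connected graph forces the blocks to form a path. Indeed, for any two cut vertices $z,z'$, every $x$-$y$-walk traverses them in the same order (else the block-cut tree would branch off the $x$-to-$y$ geodesic in the tree, producing a vertex off every $x$-$y$-walk), so the cut vertices form a chain $z_1,\dots,z_{n-1}$ and the blocks line up as $C_0,\dots,C_{n-1}$ with $x\in C_0$ and $y\in C_{n-1}$. The factorization $F=F_0 \cdots F_{n-1}$ then falls out by decomposing any walk labeled by a word of $F$ at the successive $z_i$'s: each piece lies inside $C_i$, giving a word of $F_i := d_{\mathcal{M}_F\restriction C_i}(x_i,y_i)$, and conversely any concatenation of such pieces traces an $x$-$y$-walk. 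Finally, $\mathcal{A}_{F_i}\cong(\mathcal{M}_F\restriction C_i,\{x_i\},\{y_i\})$ because $\mathcal{M}_F\restriction C_i$, being a block of an injective envelope, is itself the injective envelope of its two-element boundary space $(\{x_i,y_i\},d)$ and hence coincides (up to isomorphism) with $\mathcal{M}_{F_i}$.
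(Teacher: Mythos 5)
First, a caveat on the comparison itself: the paper does not prove Theorem \ref{thm:blockdecomposition}. It states the result with attribution to Kabil, Pouzet and Rosenberg \cite{KPR} and defers the proof there, so there is no in-paper argument to measure yours against. Your overall architecture (one-point gluings of injective envelopes $\leftrightarrow$ concatenations of final segments, an ``every vertex is essential'' lemma forcing the block--cutpoint tree to be a path, and cutting accepting paths at the successive cut vertices to obtain the factorization) is consistent with the strategy of the cited source and is the right skeleton.

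There is, however, a genuine gap in the step you yourself flag as the main obstacle. You propose to prove that every vertex of $\mathcal G_F$ lies on an $x$--$y$ walk by deleting a putatively inessential vertex and observing that ``the subspace obtained by deleting a non-essential vertex would still be an injective isometric extension of $E$.'' This mechanism does not work as stated: hyperconvexity is not preserved under removal of a point (the deleted point may be the unique witness for the nonempty intersection of some family of balls none of which involves $x$ or $y$), and the $2$-ball intersection property of the complement is exactly what you would have to verify, not something that follows from the deleted vertex being ``irrelevant to $x,y$.'' The correct tool here is the description of the injective envelope of $E=\{x,y\}$ as the tight span, i.e.\ the space of \emph{extremal} functions $f$ on $E$ with $f(x)\oplus f(y)=d(x,y)$ (equivalently, one constructs a non-expansive retraction of $\mathcal S_F$ onto a proper subspace fixing $E$ and contradicts minimality). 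From extremality one reads off directly that every state $z$ satisfies $d(x,z)\oplus d(z,y)=F$, which is the essentialness you need; minimality alone, invoked as ``delete and check,'' does not deliver it.

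A second, smaller gap sits in your first equivalence. That the one-point gluing of $\mathcal S_{F_1}$ and $\mathcal S_{F_2}$ is injective cannot be checked ``on each side independently'': the $2$-Helly property must be verified for a ball centred in one part against a ball centred in the other, and this is precisely where one uses that the gluing point $z$ realizes $d(x,z)=F_1$, $d(z,y)=F_2$ with $d(x,y)=F_1\oplus F_2$ exactly (not merely $\leq$), so that any two such balls with nonempty pairwise intersections both contain $z$. Moreover, showing the gluing is injective and contains $E$ only gives that $\mathcal S_F$ embeds into it; to conclude that $\mathcal S_F$ \emph{is} the gluing (hence reducible) you must additionally argue that no proper injective subspace of the gluing contains $\{x,y\}$, which again rests on the essentialness of $z$ and of every vertex of each $\mathcal S_{F_i}$. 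The same issue recurs at the end, where you assert that a block of an injective envelope is the injective envelope of its two boundary points; this is true but is exactly the content of the gluing lemma run in reverse, so it cannot be taken for granted. None of these gaps is fatal to the approach, but each requires the extremal-function characterization that your sketch bypasses.
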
 

From this result, the freeness of $\mathbf F^{\circ}(A^{\ast})$ follows. 

\section{Preservation of binary relations by operations}
We recall the duality between relations and operations. We consider then the special case of binary relational structures, particularly equivalence relations, leading to generalized metric   and ultrametric spaces.

\subsection{Duality between relations and operations}
Let $E$ be a set.   For $n\in \N^*:= \N\setminus
\{0\}$, a map $f: E^n
\rightarrow E$ is an
$n$-{\it ary operation} on $E$, whereas a subset
$\rho \subseteq E^{n}$ is an $n$-{ary relation} on $E$.
Denote by
$\mathcal O^{(n)}$ (resp.$\mathcal R^{n}$) the set of
$n$-ary operations (resp. relations)  on $E$ and set
$\mathcal O:=\bigcup \{\mathcal O^{(n)}: n\in N^* \}$ (resp
$\mathcal R:= \bigcup \{\mathcal R^{(n)}:
n\in N^* \}$). For  $n, i\in N^*$  with $i\leq n$, define the   $i^{th}$
$n$-{\it ary projection}
$e^{n}_{i}$ by setting $e^{n}_{i}(x_{1},\dots,x_{n}):= x_{i}$ for all  $x_{1},\dots,x_{n}\in E$ and set
$\mathcal P:= \{e^{n}_{i}: i,n \in \N^*\}$. An operation $f\in  \mathcal O$ is {\it constant} if it takes a single value, it
 is\ {\it idempotent} provided
$f(x,\dots,x)= x$ for all $x\in E$. We denote by  $\mathcal C$ (resp. $\mathcal I$) the set of  constant, (resp. idempotent)  operations on $E$.

Let $m,n\in \N^*$,
$f \in \mathcal O^{(m)}$ and $\rho \in \mathcal R^{(n)}$.    Then $f$ {\it preserves}
$\rho$
 if:
\begin{equation}
\small {(x_{1,1}, \dots, x_{1,n})\in
\rho, \dots,
(x_{m,1}, \dots, x_{m,n})\in \rho \Longrightarrow (f(x_{1,1}, \dots, x_{m,1}),\dots,f(x_{1,n}, \dots,
x_{m,n}))\in \rho}
\end{equation}
for every $m\times n$ matrix
$X:= (x_{i,j})_{i=1,\ldots,m \atop {j=1, \ldots ,n}}$ of elements of $E$.

If $\rho$ is binary and $f$ is unary, then $f$ preserves $\rho$ means:

\begin{equation}
(x, y)\in \rho
 \Longrightarrow (f(x),f(y))\in \rho
\end{equation}

for all $x, y \in E$.


If $\mathcal F$ is a set of operations on $E$, let $\Inv(\mathcal F)$, resp. $\Inv_n(\mathcal F)$ be the set of relations, resp. $n$-ary relations,  preserved by all $f\in \mathcal F$. Dually, if $\mathcal R$ is a set of relations on $E$, let $\Pol(\mathcal R)$, resp. $\Pol_n(\mathcal R)$,  be the set of operations, resp. $n$-ary operations,  which preserve all $\rho\in \mathcal R$. The operators $\Inv$ and $\Pol$ define a Galois correspondence. The study of this correspondence is the theory of clones \cite{lau}.

Two basic problems have been considered:

\noindent 1) Describe the sets of the form $Inv(\mathcal F)$. 

\noindent 2)  Describe the sets of the form $Pol (\mathcal R)$. 

A solution is due to  Bodnar\u{c}uk, Kalu\u{z}nin, Kotov,
and Romov \cite{BKKR69a, BKKR69b}. 
%
%
In concrete cases, this description does not  help much. For example, given  $\mathcal R$, decide if $Inv(Pol(\mathcal R))=\mathcal R$?

\subsection{Towards generalized metric spaces} We restrict our attention to the  case of unary operations and binary relations. We recall that if $\rho$ and $\tau$ are two binary relations on the same set $E$, then their composition $\rho\circ\tau$ is the binary relation made of pairs $(x,y)$ such that $(x,z)\in \tau$ and $(z,y)\in \rho$ for some $z\in E$. It is customary to denote it $\tau\cdot \rho$.

The set $Inv_2(\mathcal F)$ of binary relations on $E$ preserved by all $f$ belonging to a set $\mathcal F$ of self maps has some very simple  properties that we state below (the proofs are left to the reader).  For the construction of many more properties by means of primitive positive formulas, see \cite{snow}.

\begin{lemma}\label{lem:monoid} Let $\mathcal F$ be a set of unary operations on a set $E$.
Then the set  $\mathcal R:=\Inv_2(\mathcal F)$ of binary relations on $E$ preserved by all $f\in \mathcal F$ satisfies the following properties:

\begin{enumerate}[{(a)}]
\item $\Delta_E\in \mathcal R$;
\item $\mathcal R$ is closed under arbitrary intersections; in particular $E\times E\in \mathcal R$;
\item $\mathcal R$ is closed under arbitrary unions;
\item If $\rho, \tau \in \mathcal R$ then $\rho \cdot \tau \in \mathcal R$;
\item If $\rho\in \mathcal R$ then $\rho^{-1}\in \mathcal R$.

\end{enumerate}
\end{lemma}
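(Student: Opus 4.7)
The plan is to verify each of the five properties by a direct unpacking of the definition of preservation given in the excerpt. Fix an arbitrary $f\in\mathcal{F}$; since each statement is universally quantified over $\mathcal{F}$, it suffices to show each conclusion for this fixed $f$. In the binary unary setting, the preservation condition reduces to the implication $(x,y)\in\rho\Rightarrow (f(x),f(y))\in\rho$, which is the only tool I will need.

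First I would dispatch (a) by observing that for any $x\in E$, $(f(x),f(x))\in\Delta_E$ trivially, so $\Delta_E\in\mathcal{R}$. For (b), given a family $(\rho_i)_{i\in I}$ in $\mathcal{R}$ and $(x,y)\in\bigcap_{i\in I}\rho_i$, preservation of each $\rho_i$ by $f$ yields $(f(x),f(y))\in\rho_i$ for all $i$, hence $(f(x),f(y))\in\bigcap_{i\in I}\rho_i$; the statement about $E\times E$ follows by taking $I=\emptyset$ (or by direct inspection). For (c), the same template works: if $(x,y)\in\bigcup_{i\in I}\rho_i$, pick an index $i_0$ with $(x,y)\in\rho_{i_0}$, apply preservation to obtain $(f(x),f(y))\in\rho_{i_0}\subseteq\bigcup_{i\in I}\rho_i$.

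For (d), suppose $\rho,\tau\in\mathcal{R}$ and $(x,y)\in\rho\cdot\tau$. By definition of composition there is some $z\in E$ with $(x,z)\in\tau$ and $(z,y)\in\rho$. Applying the preservation of $\tau$ and $\rho$ by $f$ separately yields $(f(x),f(z))\in\tau$ and $(f(z),f(y))\in\rho$; witnessing the intermediate point by $f(z)$ shows $(f(x),f(y))\in\rho\cdot\tau$. Finally, for (e), if $\rho\in\mathcal{R}$ and $(x,y)\in\rho^{-1}$ then $(y,x)\in\rho$, preservation gives $(f(y),f(x))\in\rho$, i.e.\ $(f(x),f(y))\in\rho^{-1}$.

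There is essentially no obstacle here: the lemma is a collection of elementary closure properties that come for free from the fact that unary preservation is a purely pointwise condition. The only micro-subtlety worth flagging in writing up is the empty-index convention in (b), which one could either handle explicitly or derive (c) and the $E\times E$ membership jointly by the convention $\bigcap_{\emptyset}=E\times E$ inside the lattice of binary relations on $E$.
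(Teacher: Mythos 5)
Your proof is correct and is exactly the elementary pointwise verification the paper intends; indeed the paper explicitly leaves the proof of this lemma to the reader. (One cosmetic remark: the paper's convention is that $\rho\cdot\tau$ denotes $\tau\circ\rho$, i.e.\ the intermediate point $z$ satisfies $(x,z)\in\rho$ and $(z,y)\in\tau$, the opposite of what you wrote in (d) --- but since both relations are assumed to lie in $\mathcal R$, the argument is unaffected.)
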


Let $\mathcal R$ be a set of binary relations on  a set $E$ satisfying  items $(a)$, $(b)$, $(d)$ and $(e)$ of the above lemma (we do not require $(c)$). To make things more transparent, denote by $0$ the set $\Delta_E$, set $\rho\oplus \tau:= \rho\cdot \tau$. Then $\mathcal R$ becomes  a monoid.  Set $\overline {\rho}:= \rho^{-1}$, this defines an involution on $\mathcal R$ which reverses the monoid operation. With this involution $\mathcal R$ is an \emph{involutive monoid}. With the inclusion order, that we denote  $\leq$,  this involutive   monoid is an \emph{involutive complete ordered monoid}.

With these definitions, we have immediately:

\begin{lemma} \label{lem:distance}Let $\mathcal R$ be an involutive  complete ordered monoid of the set of binary relations on $E$ and let
$d$ be the map from $E\times E$ into $\mathcal R$ defined by  $$d(x,y):= \bigcap \{\rho\in \mathcal R: (x,y)\in \rho\}.$$
Then, the following properties hold:

\begin{enumerate}[{(i)}]
\item $d(x,y)  \leq 0$ iff $x=y$;
\item $d(x,y) \leq d(x,z)\oplus d(z, y)$;
\item $\overline{d(y,x)}=d(x,y)$.
\end{enumerate}
\end{lemma}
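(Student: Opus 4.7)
The plan is a direct verification: each of the three axioms follows almost immediately once one establishes the preliminary observation that $d(x,y)$ is well-defined as an element of $\mathcal R$ and is in fact the $\subseteq$-least member of $\mathcal R$ containing the pair $(x,y)$. Well-definedness uses closure of $\mathcal R$ under arbitrary intersections (property (b) of Lemma~5.1, which is what underlies the ``complete'' hypothesis on the ordered monoid); the same property trivially gives $(x,y) \in d(x,y)$, since every relation in the intersected family contains $(x,y)$.

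With this in hand, (i) reduces to noting that $\Delta_E = 0$ itself appears in the defining family whenever $x = y$, so $d(x,y) \subseteq \Delta_E$; conversely, if $d(x,y) \leq 0$, then $(x,y) \in d(x,y) \subseteq \Delta_E$ forces $x=y$. For (iii), one uses that the involution $\rho \mapsto \rho^{-1}$ distributes over arbitrary intersections, combined with the observation that closure of $\mathcal R$ under involution (property (e)) gives the bijection
\[
\{\tau \in \mathcal R : (y,x) \in \tau\} \longleftrightarrow \{\rho \in \mathcal R : (x,y) \in \rho\}, \qquad \tau \mapsto \tau^{-1},
\]
so that $\overline{d(y,x)} = d(x,y)$ falls out at once.

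For (ii), the composite $d(x,z) \oplus d(z,y) = d(x,z) \cdot d(z,y)$ lies in $\mathcal R$ by closure under composition (property (d)), and it contains $(x,y)$: taking $z$ itself as the intermediate witness, together with $(x,z) \in d(x,z)$ and $(z,y) \in d(z,y)$ from the preliminary step, places $(x,y)$ in the composite. Minimality of $d(x,y)$ then yields $d(x,y) \subseteq d(x,z) \cdot d(z,y)$, i.e.\ $d(x,y) \leq d(x,z) \oplus d(z,y)$.

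The only point worth a sanity check is the composition convention: the paper denotes $\beta \circ \alpha$ by $\alpha \cdot \beta$, so that $(u,v) \in \alpha \cdot \beta$ precisely when there exists $w$ with $(u,w) \in \alpha$ and $(w,v) \in \beta$, which is exactly the orientation needed for the witness argument above. No genuine obstacle arises; the entire proof is bookkeeping from the monoid-theoretic hypotheses on $\mathcal R$, and the main ``content'' is simply identifying $d(x,y)$ as the least element of $\mathcal R$ containing $(x,y)$.
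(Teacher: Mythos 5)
Your proof is correct and is exactly the direct verification the paper has in mind (the paper states the lemma as immediate and gives no proof): the key observation that $d(x,y)$ is the least member of $\mathcal R$ containing $(x,y)$, which exists by closure under arbitrary intersections, together with the composition convention $(u,v)\in\alpha\cdot\beta$ iff some $w$ has $(u,w)\in\alpha$ and $(w,v)\in\beta$, yields all three items as you describe. You also correctly read item (i) with $\leq$ rather than $=$, which matters since $\Delta_E$ need not be the least element of $\mathcal R$.
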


In
\cite{pouzet-rosenberg}, a set $E$ equipped with a map $d$ from $E\times E$ into an involutive ordered monoid $\mathcal H$ (for which  $0$ is not necessarily the least element of the monoid) and which satisfies   properties $(i), (ii), (iii)$     stated in Lemma \ref {lem:distance} is called a \emph{$\mathcal H$-distance}, and the pair $(E,d)$ a \emph{$\mathcal H$-metric space}.  A study of  metric spaces and non-expansive mappings along the lines of the one developed in Section \ref{section:generalized metric spaces} is in \cite{pouzet-rosenberg}.  It is no more than the  study of systems of binary relations and homomorphisms.  Indeed to a metric space $\mathbf E:=(E, d)$ over an ordered monoid $\mathcal H$, we may associate the relational structure $\mathbf R_d:= (E, (d_v)_{v\in \mathcal H})$  where $d_v:= 
\{(x,y): d(x,y)\leq v\}$. 
If $f$ is a map from $\mathbf E:=(E,d)$ into $\mathbf E:=(E',d')$, then $f$ is non-expansive iff $f$ is a homorphism from $\mathbf R_d$ into $\mathbf R_d'$; that is for every $v\in \mathcal H$, $(x,y)\in d_v$ implies $(f(x),f(y))\in d'_v$.  Lemma \ref {lem:distance}  justify that  we write  $d(x,y) \leq \rho$ for the fact that  a pair $(x,y)$ belongs to a binary relation $\rho$ on the  set $E$.  Hence, one can   use notions borrowed from the theory of metric spaces in the study of binary relational structures. An illustration is given in \cite{khamsi-pouzet}. 

In the next section, we consider equivalence relations and generalized ultrametric spaces.  



\section{Preservation of equivalence relations}

The presentation of this section is borrowed from \cite{pouzet}.

 A  binary relation  $\rho$ on a set $A$ is an \emph{{equivalence relation}}\;  if it is \emph{ reflexive}, \emph{symmetric}\; and \emph{transitive}. It decomposes $A$ into blocks. Two elements in the same block are equivalent; whereas two elements into two different blocks are inequivalent. The fact that a binary operation $f$ preserves $\rho$ means that if $x$ and $y$ belong to  blocks $X$ and $Y$ respectively, then the block containing $f(x, y)$ does not depends upon the particular choice of $x$ and $y$. This allows to define an operation on the set of blocks that mimicks  $f$.

  A pair $\mathcal A_F$ made a set $A$ and a  collection $\mathcal F$ of  operations on $A$ is called an \emph{algebra}. Equivalence relations preserved by all members  of $\mathcal F$ are called \emph{congruences} and their set denoted by $\Cong(\mathcal A_{\mathcal F})$. The study of the relationship between the set of congruences of an algebra and the set of maps which preserve all congruences is one of the goals of universal algebra.

\subsection{Algebra and congruences}

If $\mathcal F$ is a set of maps on $A$, the  set  $\Cong(\mathcal A_{\mathcal F})$ is a subset of the set $\Eqv(A)$ of equivalence relations on $A$; this set  is closed under intersection and union of chains. Ordered by inclusion this is an algebraic lattice. It was show by Gr\"atzer and Schmidt \cite{gratzer-schmidt} that every algebraic lattice is isomorphic to the congruence lattice of some algebra.

One of the oldest unsolved problem in  universal algebra is "the finite lattice representation problem":

\begin{problem}

Is  every finite lattice  isomorphic to the congruence lattice of  a \emph{finite} algebra? 

\end{problem}

See Gr\"atzer \cite{gratzer} 2007 and P\`alfy \cite{palfy2} 2001 for an overview. Say that a lattice $L$ is \emph{representable} as a congruence lattice if it is isomorphic to the lattice of congruences of some algebra.  Say that it is \emph{strongly representable} if every  sublattice $L'$ of some $\Eqv(A)$ (with the same $0$ and $1$ elements) which is isomorphic to $L$ is the lattice of congruences of some algebra on $A$. As shown in \cite{quackenbush-wolk}, not every representable lattice is strongly representable.
The first step in the positive direction for the representation problem  is the fact that every finite lattice  embeds as  a sublattice of the lattice of equivalences on a finite set,  a famous and non trivial result of Pudlak and Tuma \cite{pudlak-tuma} solving an old conjecture of Birkhoff. 

For an integer $n$, let $M_n$ be the lattice made of a bottom and a top element and an $n$-element antichain. Let $M_3$ be the lattice made of a $3$-element antichain and a top and bottom. This lattice is representable (as the set of congruences of the group $\Z/2\cdot\Z\times \Z/2\cdot\Z$) but not strongly representable. We may find sublattices $L$ of $\Eqv (A)$ isomorphic to $M_3$ such that the only unary maps preserving $L$ are the identity and constants (see Section \ref{section rigidity}). Hence, the congruence lattice of the algebra on $A$ made of these unary maps is $\Eqv (A)$. The sublattices $L$ of $\Eqv(A)$ such that $\Cong(\mathcal A_L)=\Eqv(A)$ (where $\mathcal A_L:=(A,\Pol^{1}(L))$)  are said to be \emph{dense}.   The fact that,  as a lattice,  $M_3$ has a dense representation in every $\Eqv(A)$ with $A$ finite on at least five elements, amounting to a Z\'adori's result \cite{Zad83} given is Section \ref{section rigidity},  appears  in \cite{demeo} as Proposition 3.3.1 on page 20. It is not known if  $M_n$ is representable for each  integer $n$ (it is easy to see that $M_n$ is representable if $n=q+1$ where $q$ is a power of a prime. The case $n=7$ was solved by W.Feit, 1983). 

\subsection{Orthogonal systems of equivalence relations}
Two equivalence relations $\rho$ and $\tau$ on the same set $E$ are \emph{orthogonal} if their intersection $\rho \cap \tau$ is the equality relation $\Delta_E$ and their join in the lattice $\Eqv(E)$ of equivalence relations on $E$ is the full relation $E\times E$ (Note that in \cite {MPRT} p.397 this terminology  was used for the fact that $\rho \cap \tau= \Delta_E$). 
 \begin{problems}
 \begin{enumerate} 
 \item  Given an integer $n$, find the largest number of  pairwise orthogonal equivalence relations on a  set of size $n$.
\item Find  the largest number of pairwise orthogonal equivalence relations on a  set of size $3m$ whose blocks have three elements?
 \item More generally, given two integers $k$ and  $m$,  find the largest integer $o(k, m)$ such that there are $o(k, m)$ pairwise orthogonal relations on a set of size $km$ whose blocks have $k$ elements?

\end{enumerate}
 \end{problems}
 
 The first question amounts to find the largest $m$ such that the lattice $M_m$ embeds as a sublattice of the lattice $\Eqv (E)$ of equivalences on a set of cardinality $n$. 
 The second question was asked to us by Rosenberg in October 2013. It is worth noticing that for $k= 2$, it was conjectured by Kotzig \cite{kotzig} in 1963 that $o(2, m)$ is $2m-1$. This conjecture, still unsolved, is known under the name of  \emph{Perfect one-factorisation (PIF)} of the complete graph $K_{2m}$. A \emph{one-factor}  of a graph is a set of pairwise disjoint edges whose union covers all vertices; a \emph{factorisation} is a covering all the edges by pairwise disjoint one-factors; a one-factorization is \emph{perfect} if the union of any two one-factors forms a Hamiltonian cycle.  Kotzig's conjecture is known to hold if $m$ or $2m-1$ is prime  and also if $2m$ is among some particular set of values; eg a PIF of $K_{56}$ was obtained  only this year \cite{pike}.

%

\subsection{Generalized ultrametric spaces}
Generalized ultrametric spaces provide natural sets of equivalence relations. We restrict ourselves to the case of ultrametric spaces over a join-semilattice $V$, and consider first the case where $V$ is complete and completely meet-distributive.

A \emph{join-semilattice} is an ordered set in which two arbitrary elements $x$ and $y$  have  a join, denoted by $x\vee y$,  defined as the least element of the set of common upper bounds of $x$ and $y$.

Let $V$ be  a join-semilattice with a least element, denoted by $0$.
A \emph{pre-ultrametric space} over $V$ is a pair $\mathbf  E:=(E,d)$ where $d$ is a map from $E\times E$ into $V$ such that for all $x,y,z \in E$:
\begin{equation} \label{eq:ultra1} 
d(x,x)=0,\; d(x,y)=d(y,x)  \text{~and } d(x,y)\leq d(x,z)\vee d(z, y).
\end{equation}

\noindent The map $d$ is an \emph{ultrametric distance} over $V$ and $\mathbf E$ is an \emph{ultrametric space} over $V$ if $\mathbf E$ is a pre-ultrametric space and $d$ satisfies \emph{the separation axiom}:
\begin{equation} \label{eq:ultra2}
d(x,y)=0\; \text{implies} \;  x=y .
\end{equation}

Any family   $\mathbf R:=(E, (\rho_i)_{i\in
I})$ of equivalence relations on  a set $E$ can be viewed as a pre-ultrametric space on $E$. Indeed, 
given a set $I$,  let $\powerset (I)$ be the power set of $I$. Then $\powerset (I)$, ordered by inclusion, is a join-semilattice (in fact a complete Boolean algebra) in which the join is the union, and  $0$  the empty set.
 \begin {proposition}\label{prop:ultra1}
Let $\mathbf R:=(E, (\rho_i)_{i\in
I})$ be a family  of equivalence relations. For $x,y\in E$, set $d_{\mathbf R}(x,y):=\{i\in I: (x,y)\not \in \rho_i\}$. Then   the pair $\mathbf E_{\mathbf R}:=(E, d_{\mathbf R})$ is a pre-ultrametric space over $\powerset (I)$.

\noindent Conversely, let $\mathbf E:=(E,d)$ a pre-ultrametric space over $\powerset (I)$. For every $i\in I$ set $\rho_i:=\{(x,y)\in E\times E: i\not \in d(x,y) \}$ and let  $\mathbf R:=(E, (\rho_i)_{i\in
I})$. Then each $\rho_i$ is an equivalence relation on $E$ and $d_{\mathbf R}=d$.

\noindent Furthermore,  $\mathbf E_{\mathbf R}$ is an ultrametric space if and only if $\bigcap_{i\in I} \rho_i= \Delta_E:=\{(x,x): x\in E\}.$   \end{proposition}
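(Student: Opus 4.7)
The proof is essentially a routine double verification, translating properties of equivalence relations on $E$ into properties of the set-valued distance $d_{\mathbf{R}}$ and back. I would organize it in three blocks corresponding to the three assertions.

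\textbf{Forward direction.} Given a family $\mathbf{R}=(E,(\rho_i)_{i\in I})$ of equivalence relations, I would check the three clauses of \eqref{eq:ultra1} for $d_{\mathbf R}$ viewed as a map into $(\powerset(I),\subseteq,\cup,\emptyset)$. Reflexivity of each $\rho_i$ gives $(x,x)\in\rho_i$ for every $i$, hence $d_{\mathbf R}(x,x)=\emptyset=0$. Symmetry of each $\rho_i$ immediately gives $d_{\mathbf R}(x,y)=d_{\mathbf R}(y,x)$. The ultrametric inequality $d_{\mathbf R}(x,y)\subseteq d_{\mathbf R}(x,z)\cup d_{\mathbf R}(z,y)$ is the contrapositive of transitivity: if $i\notin d_{\mathbf R}(x,z)\cup d_{\mathbf R}(z,y)$, then $(x,z),(z,y)\in\rho_i$, so $(x,y)\in\rho_i$ by transitivity of $\rho_i$, i.e.\ $i\notin d_{\mathbf R}(x,y)$.

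\textbf{Converse direction.} Starting from a pre-ultrametric $(E,d)$ over $\powerset(I)$, I would fix $i\in I$ and verify that $\rho_i:=\{(x,y):i\notin d(x,y)\}$ is an equivalence. Reflexivity comes from $d(x,x)=\emptyset$, symmetry from $d(x,y)=d(y,x)$, and transitivity from the ultrametric inequality applied index-wise: if $i\notin d(x,z)$ and $i\notin d(z,y)$, then $i\notin d(x,z)\cup d(z,y)$ and hence $i\notin d(x,y)$. Finally, I would recover $d$ from $\mathbf R$ by the direct computation
\[
d_{\mathbf R}(x,y)=\{i\in I:(x,y)\notin\rho_i\}=\{i\in I:i\in d(x,y)\}=d(x,y),
\]
using the very definition of $\rho_i$.

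\textbf{Separation clause.} The final equivalence is an unpacking of the definitions: $\mathbf E_{\mathbf R}$ is an ultrametric space iff $d_{\mathbf R}(x,y)=\emptyset$ forces $x=y$, and $d_{\mathbf R}(x,y)=\emptyset$ is equivalent to $(x,y)\in\rho_i$ for every $i\in I$, that is $(x,y)\in\bigcap_{i\in I}\rho_i$. Since reflexivity gives the inclusion $\Delta_E\subseteq\bigcap_{i\in I}\rho_i$ for free, the separation axiom \eqref{eq:ultra2} is equivalent to the reverse inclusion, hence to the equality $\bigcap_{i\in I}\rho_i=\Delta_E$.

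There is no real obstacle here; the only point worth highlighting is that each axiom for a pre-ultrametric over $\powerset(I)$ breaks up \emph{pointwise in $I$} into the corresponding axiom for an equivalence relation, which is exactly why the correspondence $\mathbf R\leftrightarrow(E,d_{\mathbf R})$ is a perfect translation. I would present the triangle inequality step carefully, since it is the one whose pointwise reformulation is less immediate than reflexivity and symmetry.
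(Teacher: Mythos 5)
Your verification is correct and complete, and it is exactly the routine pointwise translation the paper has in mind (the paper states this proposition without proof, treating it as immediate, just as it explicitly leaves the companion Proposition~\ref{prop:ultra2} to the reader). All three blocks — the forward check of \eqref{eq:ultra1}, the converse recovery of $d$ from the $\rho_i$, and the identification of the separation axiom \eqref{eq:ultra2} with $\bigcap_{i\in I}\rho_i=\Delta_E$ — are handled correctly.
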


For a join-semilattice $V$  with a $0$ and for two pre-ultrametric spaces $\mathbf E:=(E,d)$ and $\mathbf E':=(E',d')$  over $V$,  a \emph{non-expansive mapping} (or contracting map) from $\mathbf E$ to $\mathbf E'$ is any map $f:E\rightarrow  E'$ such that for all $ x,y\in E$:
\begin{equation}
d'(f(x),f(y))\leq d(x,y). \end{equation}
Pre-ultrametric spaces with  their non-expansive mappings and systems of equivalence relations with their relational homomorphisms are two faces of the same coin.   Indeed: 

\begin{proposition} \label{prop:ultra2} Let $\mathbf  R:=(E, (\rho_i)_{i\in
I})$ and $\mathbf R':=(E', (\rho'_i)_{i\in
I})$ be two family  of equivalence relations. A map $f:E\rightarrow E'$ is a  homomorphism from $\mathbf  R$ into $\mathbf R'$ if and only if $f$ is a non-expansive mapping from $\mathbf E_{\mathbf R}$ into $\mathbf E_{\mathbf R'}$.
\end{proposition}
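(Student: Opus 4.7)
The proposition is essentially a translation lemma between two languages for the same data, so the plan is to unpack the two conditions and observe that they are literally the same statement indexed over $I$.

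First, I would recall from Proposition \ref{prop:ultra1} that $d_{\mathbf{R}}(x,y) = \{i \in I : (x,y) \notin \rho_i\}$, and similarly $d_{\mathbf{R}'}(f(x),f(y)) = \{i \in I : (f(x),f(y)) \notin \rho'_i\}$. The order on the join-semilattice $\powerset(I)$ is set inclusion, so the non-expansiveness inequality $d_{\mathbf{R}'}(f(x),f(y)) \leq d_{\mathbf{R}}(x,y)$ reads, for all $x,y \in E$,
\[
\{i \in I : (f(x),f(y)) \notin \rho'_i\} \;\subseteq\; \{i \in I : (x,y) \notin \rho_i\}.
\]

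Next I would argue the equivalence of this inclusion with the homomorphism condition by taking contrapositives index by index. Fix $x, y \in E$. The inclusion above holds iff for every $i \in I$, the implication ``$(f(x),f(y)) \notin \rho'_i \Rightarrow (x,y) \notin \rho_i$'' is valid, which by contraposition is ``$(x,y) \in \rho_i \Rightarrow (f(x),f(y)) \in \rho'_i$.'' Quantifying over all $x,y \in E$ and all $i \in I$ gives exactly the definition of $f$ being a homomorphism from $\mathbf{R}$ to $\mathbf{R}'$.

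There is no real obstacle: the proposition is a bookkeeping statement asserting that the passage $\mathbf{R} \mapsto \mathbf{E}_{\mathbf{R}}$ and its converse described in Proposition \ref{prop:ultra1} are functorial with respect to the natural morphisms on each side. The only subtlety is to remember that ``$\leq$'' on $\powerset(I)$ is inclusion (so a smaller distance corresponds to fewer indices $i$ where the pair fails to be $\rho_i$-related), but once this is made explicit the two conditions collapse to a single universally-quantified biconditional. Both directions of the ``iff'' are handled simultaneously by this unpacking, so no separate arguments for the two implications are required.
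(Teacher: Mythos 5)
Your proof is correct and is exactly the unpacking the paper has in mind: the paper states only that ``the proof is immediate and left to the reader,'' and your index-by-index contraposition of the inclusion $\{i : (f(x),f(y)) \notin \rho'_i\} \subseteq \{i : (x,y) \notin \rho_i\}$ is the intended argument. Nothing is missing.
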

The  proof  is immediate and left to the reader.

\subsection{Hyperconvexity} 

Most of the results of this subsection are borrowed from \cite{pouzet-rosenberg}. 

Let $V$ be a join-semilattice with a least element $0$. Let  $d_{\vee}: V\times V \rightarrow V$ defined by $d_{\vee} (x,y)= x\vee y$ if $x\not =y$ and $d_{\vee} (x,y)=0$ if $x=y$. 

\begin{lemma} The map $d_{\vee} $ is a ultrametric distance over $V$ satisfying: 
\begin{equation}\label{eq:maximum}
d_{\vee}(0,x)= x 
\end{equation} 
for all $x\in V$. 

This is the largest ultrametric distance over $V$ satisfying (\ref{eq:maximum}).  

\end{lemma}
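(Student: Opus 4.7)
The plan is to verify the three assertions of the lemma in turn --- that $d_\vee$ is an ultrametric distance, that $d_\vee(0,x)=x$ for all $x$, and that $d_\vee$ dominates every other such distance --- all of which are elementary consequences of the definition together with the fact that $0$ is the least element of $V$.

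First, for the ultrametric axioms: reflexivity $d_\vee(x,x)=0$ is built into the definition, and symmetry is inherited from commutativity of $\vee$. Separation requires a small but essential observation: if $x\neq y$ and $d_\vee(x,y)=x\vee y=0$, then since $0$ is the least element of $V$ one would have $x=y=0$, a contradiction. For the strong triangle inequality $d_\vee(x,y)\leq d_\vee(x,z)\vee d_\vee(z,y)$, I would case-split on whether $z$ coincides with $x$ or $y$: when $x=y$ or $z\in\{x,y\}$ the inequality is trivial or an equality, and otherwise one has
$$d_\vee(x,z)\vee d_\vee(z,y)=(x\vee z)\vee(z\vee y)=x\vee y\vee z\geq x\vee y=d_\vee(x,y).$$

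Second, the identity $d_\vee(0,x)=x$ is a one-line check: both sides vanish when $x=0$, and otherwise $d_\vee(0,x)=0\vee x=x$. Third, for maximality, let $d$ be any ultrametric distance on $V$ satisfying $d(0,x)=x$ for all $x\in V$. When $x=y$ there is nothing to prove; when $x\neq y$, the strong triangle inequality routed through $0$ gives
$$d(x,y)\leq d(x,0)\vee d(0,y)=x\vee y=d_\vee(x,y),$$
so $d\leq d_\vee$ pointwise, which is exactly the claim.

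There is no genuine obstacle in the argument; the only point that deserves a moment's attention is separation, where the hypothesis that $0$ is the minimum of $V$ is essential --- without it, two distinct elements whose join is $0$ would violate the axiom. Everything else is a direct unpacking of the definition of $d_\vee$ and the ultrametric axioms.
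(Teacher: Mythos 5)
Your proof is correct and follows essentially the same route as the paper: a case split on coincidences among $x,y,z$ for the strong triangle inequality, and the triangle inequality through $0$ for maximality. The only difference is that you spell out the separation axiom (using that $0$ is the least element), which the paper leaves implicit; this is a harmless and welcome addition.
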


\begin{proof}Let $x,y,z$. If two of these elements are equal, the triangular ineqality holds. Otherwise we have trivially $d_{\vee}(x,y)= x\vee y\vee z= d_{\vee}(x,z) \vee d_{\vee}(z,y)$.  This proves that $d_{\vee}$ is an ultrametric distance. If $d$ is any ultrametric distance satisfying (\ref{eq:maximum}) then $d(x,y)\leq d(x,0) \vee d(0,y)=x\vee y$ for every $x,y\in V$. If $x\not =y$ we get $d(x,y)\leq d_{\vee}(x,y)$ and if $x=y$ we get $d(x,y)=0=d_{\vee}(x,y)$. 
\end{proof}

Let  $x,y$ be  two elements of $V$.

If $d$ is any ultrametric distance over $V$  we have:

\begin{equation}
x\leq y\vee d(x,y)
\end{equation}
and 

\begin{equation}
y\leq x \vee d(x,y). 
\end{equation}

Let $D(x,y):= \{z\in V: x\leq y\vee z\}$. If $V$ is a distributive lattice then $D(x,y)$ is a filter. 
Indeed, if $x\leq y\vee  z_1$ and $x\leq y \vee z_2$, then $x\leq  (y\vee z_1) \wedge (y \vee z_2)= y \vee (z_1\wedge z_2)$. 
Hence, if  $V$ is finite then $D(x,y)$ has  a least element, the \emph{residual}  of $x$ and $y$. 

In full generality,  the \emph{residual} of two elements $x,y$ of  a join-semilattice $V$ (or even a poset) is  the least element $x\setminus y$, if it exists,  of the set $D(x,y)$. If $V$ is a Boolean algebra, this is the ordinary  \emph{difference} of $x$ and $y$. We say that $V$ is \emph{residuated} if the residual of any two elements exists.

\begin{lemma}\label{residual} Let $V$ be  a join semilattice with a least element $0$. 
If $V$ is residuated, then the map $d_V: V\times V\rightarrow V$ defined by $d_V(x,y):= (x\setminus y) \vee (y\setminus x)$ is  an ultrametric distance over $V$, and in fact the least possible distance $d$ satisfying condition (\ref{eq:maximum}).

\end{lemma}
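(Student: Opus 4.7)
The plan is to verify the three ultrametric axioms for $d_V$ one at a time and then deduce minimality.

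First I would dispatch symmetry and the condition $d_V(x,x)=0$: symmetry is built into the definition, and $x\setminus x$ is the least $z$ with $x\leq x\vee z$, which is $0$, so $d_V(x,x) = 0\vee 0 = 0$. I would also check (\ref{eq:maximum}) in passing: $0\setminus x = 0$ since $0\leq x\vee 0$ trivially, while $x\setminus 0$ is the least $z$ with $x\leq z$, namely $x$; hence $d_V(0,x) = x$.

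The heart of the argument is the ultrametric triangle inequality $d_V(x,y) \leq d_V(x,z)\vee d_V(z,y)$, and this is the step I would expect to require the most care. Writing $a := d_V(x,z)$ and $b := d_V(z,y)$, one uses the defining property of residuals twice: from $x\leq z\vee (x\setminus z)$ and $x\setminus z\leq a$ one gets $x\leq z\vee a$; similarly $z\leq y\vee b$. Combining, $x\leq y\vee(a\vee b)$, so $a\vee b\in D(x,y)$ and therefore $x\setminus y\leq a\vee b$ by minimality of the residual. The symmetric computation yields $y\setminus x\leq a\vee b$, and taking the join gives the inequality. Separation comes for free: if $d_V(x,y)=0$ then $x\setminus y=0$ forces $x\leq y\vee 0 = y$, and the symmetric inequality gives $x=y$.

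For minimality, suppose $d$ is any ultrametric distance over $V$ satisfying $d(0,x)=x$. Using the triangle inequality together with $d(0,x)=x$ and $d(0,y)=y$, one obtains
\begin{equation*}
x = d(0,x) \leq d(0,y)\vee d(y,x) = y\vee d(x,y),
\end{equation*}
so $d(x,y)\in D(x,y)$ and hence $x\setminus y \leq d(x,y)$; by symmetry $y\setminus x\leq d(x,y)$, and joining gives $d_V(x,y)\leq d(x,y)$. This shows $d_V$ is below every ultrametric satisfying (\ref{eq:maximum}), completing the proof. The only genuinely non-routine point is the double application of the residual definition in the triangle inequality; everything else is bookkeeping from the definitions, and no distributivity or completeness hypothesis beyond existence of residuals is needed.
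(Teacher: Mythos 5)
Your proof is correct and follows essentially the same route as the paper: the crux in both is the residuation inequality $x\setminus y\leq (x\setminus z)\vee(z\setminus y)$, obtained by chaining $x\leq z\vee(x\setminus z)$ and $z\leq y\vee(z\setminus y)$ and invoking minimality of the residual, with the remaining axioms being routine. Your treatment of the minimality claim is in fact slightly more explicit than the paper's, which only records the inequalities $x\leq y\vee d(x,y)$ and $y\leq x\vee d(x,y)$ just before the lemma and leaves the conclusion $d_V\leq d$ implicit.
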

\begin{proof}
 Clearly, $d_V(x,y)=0$ iff $x=y$ and $d_V(x,y)=d_V(y,x)$. 
Let $x\in V$. We have $0\setminus x=0$ and $x\setminus 0=x$ hence $d_v(0,x)=x$. 
Let $x,y\in L$. Clearly, $x\leq y\vee (x\setminus y)\leq d_V(x,y)$. Furthermore, $d_V(x,y)\leq x\vee y$. Hence the triangular inequality holds for $\{0, x,y\}$. Now, let $z\in V$. We have:

\begin{equation}\label{eq:residuation}
x\setminus y\leq (x\setminus z) \vee (z\setminus y).
\end{equation}
 Indeed, this inequality amounts to   $x\leq y\vee ( (x\setminus z) \vee (z\setminus y))$. An inequality which follows from the inequalities  $x\leq z\vee (x\setminus z)$ and $z\leq y \vee (z\setminus y)$.   
 
 The triangular inequality follows easily. 
\end{proof}

We say that  a complete lattice $V$ is \emph{$\kappa$-meet-distributive} if for every subset $Z\subseteq V$ with  $\vert Z\vert \leq \kappa$ and $y\in V$,
$$ \wedge  \{y\vee z: z\in Z\}= y\vee \bigwedge Z.$$ It is \emph{completely meet-distributive} if it is $\vert V\vert$-meet-distributive (beware, this terminology has other meanings). A meet-distributive lattice is also called a op-frame. This is a Heyting algebra w.r.t. to the join as the binary operation, and the involution equal to the identity. 
We have:

\begin{lemma}\label{residual3} Let $V$ be complete lattice. Then   $V$ is residuated if and only if it  completely meet-distributive. \end{lemma}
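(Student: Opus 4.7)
My plan is to prove the two implications separately, each via a short argument that exploits directly the definition of the residual as the least element of the filter-like set $D(x,y) = \{z \in V : x \leq y \vee z\}$.

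For the direction \emph{completely meet-distributive $\Longrightarrow$ residuated}, given $x,y \in V$ I would form the candidate $z_0 := \bigwedge D(x,y)$, which exists since $V$ is complete. It suffices to check $z_0 \in D(x,y)$, since $z_0$ is then automatically the least element. Applying complete meet-distributivity to the family $\{z : z \in D(x,y)\}$ with the fixed element $y$ yields
\[
\bigwedge_{z \in D(x,y)} (y \vee z) \;=\; y \vee \bigwedge_{z \in D(x,y)} z \;=\; y \vee z_0.
\]
Since $x \leq y \vee z$ for every $z \in D(x,y)$, one has $x \leq \bigwedge_{z \in D(x,y)} (y \vee z) = y \vee z_0$, so $z_0 \in D(x,y)$ and $x \setminus y = z_0$.

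For the converse direction, fix $y \in V$ and $Z \subseteq V$, and set $a := \bigwedge \{y \vee z : z \in Z\}$ and $b := y \vee \bigwedge Z$. The inequality $b \leq a$ is immediate: $\bigwedge Z \leq z$ for every $z \in Z$, so $b \leq y \vee z$ for every $z \in Z$, hence $b \leq a$. For the reverse inequality $a \leq b$, I would use the residual $a \setminus y$, which exists by hypothesis. By definition $a \leq y \vee (a \setminus y)$, so it is enough to show $a \setminus y \leq \bigwedge Z$. For each $z \in Z$ we have $a \leq y \vee z$ (by the very definition of $a$), which means $z \in D(a,y)$; minimality of the residual then gives $a \setminus y \leq z$. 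Taking the meet over $z \in Z$ yields $a \setminus y \leq \bigwedge Z$, and therefore $a \leq y \vee (a \setminus y) \leq y \vee \bigwedge Z = b$.

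There is no real obstacle: the whole proof is a two-line manipulation of the universal property of $x \setminus y$ as the least element of $D(x,y)$. The only subtlety to state cleanly is that in the first direction one must justify that $D(x,y)$ is nonempty (it contains $x$, since $x \leq y \vee x$), so that $z_0 = \bigwedge D(x,y)$ is well-defined, and in the second direction one should note that the argument uses only the existence of $a \setminus y$ for the single pair $(a,y)$, not full residuation of $V$ as an intermediate step.
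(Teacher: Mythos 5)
Your proof is correct and follows essentially the same route as the paper's: for the residuated direction you compare $\bigwedge\{y\vee z: z\in Z\}$ with $y\vee\bigwedge Z$ using the minimality of the residual, and for the converse you apply complete meet-distributivity to the family $D(x,y)$ to show its meet is its least element. The only addition is your explicit remark that $D(x,y)$ is nonempty, which the paper leaves implicit.
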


\begin{proof}
Suppose that $V$ is residuated. Let $y\in V$ and $Z\subseteq V$. Let  $x:= \wedge  \{y\vee z: z\in Z\}$ and let $x\setminus y$ be the residual of $x$ and $y$. 
Trivially $y\vee \bigwedge Z$  is a lower bound of $\{y\vee z: z\in Z\}$. Hence, $y\vee \bigwedge Z\leq  \wedge  \{y\vee z: z\in Z\}=x$. We claim that conversely $x\leq    y\vee \bigwedge Z$. It will follows that 
$\bigwedge \{y\vee z: z\in Z\}= y\vee \bigwedge Z$ as required. Indeed, from the fact that $x$ is a lower bound of $\{y\vee z: z\in Z\}$ we get that $x\setminus y$ is a lower bound of $Z$ and thus $x \setminus y \leq \bigwedge Z$. It follows that $x\leq y \vee x\setminus y\leq y\vee \bigwedge Z$, proving our claim.

 Suppose that  $V$ is complete and completely meet-distributive. Let $x,y\in V$ and $Z:= D(x,y)$. Since $V$ is complete, $\bigwedge Z$ exists. Due to complete meet-distributivity, we have $y\vee \bigwedge Z=\bigwedge \{y\vee z: z\in Z\}\geq x$, hence $\bigwedge Z$ is the least element of $Z$, proving that this is $x\setminus y$.  
\end{proof}

A residuated lattice does not need to be complete. For an example, if $V$ is a Boolean algebra, then $V$ is residuated and the distance over $V$,   $d_V(a,b)$ is equal to $a \Delta b$,  the symmetric difference of $a$ and $b$. There is a huge litterature about Boolean algebra viewed as metric spaces (e.g. \cite{blumenthal, blumenthal1, blumenthal-menger}). However, from Lemma \ref{residual3}, we have:

\begin{corollary}\label{cor:distributive1} A finite lattice is residuated iff it is distributive. 
\end{corollary}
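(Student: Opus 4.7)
The plan is to reduce the statement to Lemma~\ref{residual3}, which characterizes residuated complete lattices as the completely meet-distributive ones. Since a finite lattice is automatically complete, the corollary is equivalent to the assertion that, for finite lattices, complete meet-distributivity coincides with ordinary distributivity.

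The easy direction is that complete meet-distributivity implies distributivity: applying the condition to a two-element set $Z = \{z_1, z_2\}$ gives
\[
(y \vee z_1) \wedge (y \vee z_2) \;=\; y \vee (z_1 \wedge z_2),
\]
which is exactly one of the two equivalent forms of the distributive law.

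For the converse, assuming $V$ is a finite distributive lattice, I would prove by induction on $|Z|$ that
\[
\bigwedge_{z \in Z} (y \vee z) \;=\; y \vee \bigwedge Z
\]
for every nonempty subset $Z \subseteq V$ and every $y \in V$. The case $|Z| = 1$ is trivial. For the inductive step, write $Z = Z' \cup \{z_0\}$ with $|Z'| < |Z|$; apply the induction hypothesis to $Z'$ and then the two-variable distributive law to combine the meet with $y \vee z_0$. Since $V$ is finite, this handles every subset $Z$, and the empty case is trivial (the empty meet equals the top element, and $y \vee 1 = 1$). Thus finite distributivity yields complete meet-distributivity, and Lemma~\ref{residual3} concludes that $V$ is residuated.

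No real obstacle arises here: the corollary is essentially a bookkeeping consequence of Lemma~\ref{residual3} combined with the elementary fact that the finite distributive law propagates to arbitrary finite meets by induction.
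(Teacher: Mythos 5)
Your proof is correct and follows essentially the same route as the paper, which derives the corollary from Lemma~\ref{residual3} after observing that a finite lattice is complete and that, for finite lattices, complete meet-distributivity reduces to the ordinary distributive law (the paper even records the forward direction separately, noting that in a distributive lattice $D(x,y)$ is a filter and hence has a least element when $V$ is finite --- which is the same finite-meet induction you carry out). Nothing is missing; the induction on $|Z|$ is exactly the intended bookkeeping.
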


From Lemma \ref{residual},  \ref{residual2} and Proposition \ref{prop-embedding},  we have \begin{theorem}
If a join-semilattice $V$ is completely meet-distributive  then 
it can be endowed with an ultrametric  distance $d_{ V}$ for which it becomes hyperconvex. Futhermore, every ultrametric metric space over $  V$ embeds isometrically into a power of $ V$. 
\end{theorem}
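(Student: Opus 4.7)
The plan is to assemble the conclusion from the three results developed immediately before the statement. First, since $V$ is a complete lattice that is completely meet-distributive, Lemma \ref{residual3} gives that $V$ is residuated, so every pair $x,y \in V$ admits a residual $x \setminus y$. Lemma \ref{residual} then supplies the candidate distance $d_V(x,y) := (x \setminus y) \vee (y \setminus x)$, verifies that it is an ultrametric over $V$, and shows it is the least ultrametric distance $d$ on $V$ satisfying $d(0,x) = x$.

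Next, I would view $V$ as a Heyting algebra in the sense of Section \ref{section:generalized metric spaces}: take $\oplus := \vee$ as the monoid operation, $0$ as the neutral (and least) element, and the identity as the involution. Complete meet-distributivity is precisely the infinite distributivity condition (\ref{distribinfty}), while idempotence and commutativity of $\vee$ make this identity involution automatically compatible with $\oplus$. Under this identification, the canonical residuation-based distance $d_{\mathcal H}$ introduced in the discussion preceding Proposition \ref{prop-embedding} reduces to $\lceil p \vee -q \rceil \vee \lceil -q \vee p \rceil$; since by its very definition the right quotient $\lceil v \oplus -\beta \rceil$ is the least $r$ with $v \leq r \vee \beta$, it coincides with the residual $v \setminus \beta$. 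Thus $d_{\mathcal H}(p,q)$ matches $d_V(p,q)$ exactly.

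Once $V$ is recognized as a Heyting algebra and $d_V$ as its canonical distance, Proposition \ref{prop-embedding} applies directly and delivers both halves of the theorem: $(V, d_V)$ is hyperconvex, and every metric space over $V$ embeds isometrically into a power of $(V, d_V)$. Because $\oplus = \vee$ is idempotent, the triangle inequality $d(x,y) \leq d(x,z) \oplus d(z,y)$ collapses to the strong triangle inequality, so every metric space over $V$ is automatically an ultrametric space, and the sup-distance on a product preserves this property; hence the embedding is indeed of ultrametric spaces into an ultrametric power.

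The only delicate point, more a verification than a genuine obstacle, is matching the two a priori different distance definitions: the residual-based one from Lemma \ref{residual} and the Heyting-algebra one from Proposition \ref{prop-embedding}. Once this identification is made explicit, the theorem is a direct transport of Proposition \ref{prop-embedding} to the op-frame setting, with the added observation that idempotence of $\oplus$ forces the whole category to live in the ultrametric world.
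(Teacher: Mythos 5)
Your proposal is correct and follows essentially the same route as the paper, which derives the theorem directly from Lemma \ref{residual3} (completely meet-distributive $\Rightarrow$ residuated), Lemma \ref{residual} (the residual-based ultrametric $d_V$), and Proposition \ref{prop-embedding} applied to $V$ viewed as a Heyting algebra with $\oplus=\vee$ and identity involution. Your explicit check that the residuation-based quotient $\lceil v\oplus -\beta\rceil$ coincides with the residual $v\setminus\beta$, and the remark that idempotence of $\vee$ keeps everything in the ultrametric category, are exactly the (unstated) verifications behind the paper's one-line citation.
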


Rewriting  Proposition \ref {equivalenceAR} we obtain that   \emph{in the category of ultrametric spaces over  a completely meet-distributive lattice  $ V$, injective, absolute retracts,  hyperconvex spaces, spaces with the extension property and retracts of powers of $({ V},d_{ V})$ coincide}. Also, with Proposition \ref {prop:injectiveenvelope}, . 
Every ultametric space has an injective envelope.

This  result obtained for general Heyting algebras in \cite{jawhari-al}  has been independently obtained by \cite{bayod-martinez} in 1987 for ordinary ultrametric spaces, see  \cite{ackerman} for generalizations. 


\subsection{Preservation}

Let $\mathbf E:=(E,d)$ be a metric space over a join-semilattice $V$. For each $r\in V$ set $\equiv_r:= \{(x,y)\in E: d(x,y)\leq r\}$. Let $\Eqv_{d}(E):= \{\equiv_r: r\in V\}$.  Let $\mathbf F:=Hom(\mathbf E, \mathbf E)$ be the set of non-expansive  maps from $\mathbf E$ into itself, let  $\mathcal E_{\mathbf F}:= (E, \mathbf F)$ be the algebra made of  unary operations $f\in \mathbf F$ and let  $\Cong_{d}(E):= \Cong(\mathcal E_{\mathbf F})$ be the set of congruences of this algebra, that is  the set of all equivalence relations on $E$ preserved by all contractions from $\mathbf E$ into itself.

\begin{proposition}\label{fourre-tout} Let $\mathbf E:= (E,d)$ be an ultrametric space over a join-semilattice $V$ with a least element $0$. Then:
\begin{enumerate}

\item $\Eqv_{d}(E) \subseteq \Cong_{d}(E)$. 

\item A map $f:E \rightarrow E$ is a non-expansive map of $\mathbf (E, d)$ into itself iff  it preserves all members of $\Eqv_{d}(E)$. 

\item If the meet of every non-empty subset of $V$ exists, then $\Eqv_d(E)$ is an intersection closed subset of $\Eqv(E)$,  the set of equivalence relations on $E$. 
\item The set $\Cong_{d}(E)$ is an algebraic lattice; furthermore, for every $(x,y) \in E \times E$,  the  least member   $\delta(x,y)$ of $\Cong_{d}(E)$ containing $(x,y)$ is a compact element of $\Cong_{d}(E)$ and   $\delta(x,y)$ is included into $\equiv_r$, where   $r:= d(x,y)$. 

\item Any two members of $\Eqv_d(E)$ commute and $\equiv_r\circ \equiv_s=\equiv_s\circ \equiv_r= \equiv_{r\vee s}$ for every $r,s\in  V$  iff $\mathbf E$ is convex. 
\item $\mathbf E$ is hyperconvex iff $\Eqv_d(E)$ is a completely meet-distributive sublattice of the lattice $\Eqv(E)$ of equivalence relations  on $E$. 
\end{enumerate}
\end{proposition}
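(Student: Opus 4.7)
The plan is to handle the six items in turn, observing that (1)--(3) are direct unpackings of the definitions of $\equiv_r$ and of non-expansiveness. For (1), if $f$ is non-expansive and $d(x,y)\leq r$, then $d(f(x),f(y))\leq d(x,y)\leq r$, so every $\equiv_r$ belongs to $\Cong_d(E)$. The remaining direction of (2) comes from applying preservation of $\equiv_r$ to the specific value $r:=d(x,y)$, yielding $d(f(x),f(y))\leq d(x,y)$. For (3), given a non-empty family $(r_i)_{i\in I}$ admitting a meet $r:=\bigwedge_i r_i$ in $V$, one has $\bigcap_i \equiv_{r_i}=\equiv_r$, since $d(x,y)\leq r_i$ for every $i$ is equivalent to $d(x,y)\leq r$.

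For (4), $\Cong_d(E)$ is by construction the congruence lattice of the algebra $\mathcal{E}_{\mathbf F}$, hence algebraic by the classical representation of congruence lattices; its compact elements are precisely the finitely generated congruences, so each principal $\delta(x,y)$ is compact. Since by (1) the equivalence $\equiv_r$ with $r:=d(x,y)$ is already a congruence containing the pair $(x,y)$, the minimality of $\delta(x,y)$ gives $\delta(x,y)\subseteq \equiv_r$. For (5), the ultrametric triangle inequality yields $\equiv_r\circ\equiv_s\subseteq\equiv_{r\vee s}$ unconditionally, and symmetry of equivalence relations gives commutation $\equiv_r\circ\equiv_s=\equiv_s\circ\equiv_r$; the reverse inclusion $\equiv_{r\vee s}\subseteq\equiv_r\circ\equiv_s$ is, word for word, the convexity condition specialized to the ultrametric case, whence the equivalence claimed.

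For (6), I combine (5) with the 2-Helly half of hyperconvexity. First, (5) makes $\Eqv_d(E)$ a sublattice of $\Eqv(E)$ under convexity: meets coincide with intersections by (3), while joins become $\equiv_r\vee_{\Eqv(E)}\equiv_s=\equiv_r\circ\equiv_s=\equiv_{r\vee s}$, since commuting equivalences have their join equal to their composition. Recognizing each ball $B(x,r)$ as an equivalence class $[x]_{\equiv_r}$, the 2-Helly property should translate into the distributive identity $\equiv_s\vee\bigwedge_i\equiv_{r_i}=\bigwedge_i(\equiv_s\vee\equiv_{r_i})$: in one direction, given $(x,y)$ in the right-hand side, 2-Helly applied to the balls $B(x,s)$ and $B(y,r_i)$ produces $z$ with $d(x,z)\leq s$ and $d(z,y)\leq r_i$ for every $i$, so that $d(z,y)$ is a lower bound of $\{r_i\}$; in the other direction, one reconstructs a witness in an arbitrary intersection of pairwise coherent balls from the meet-distributive identity, applied iteratively to the associated equivalences. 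The main obstacle is precisely this bidirectional translation in (6), since meets in $V$ need not exist a priori and have to be interpreted inside $\Eqv_d(E)$ itself, requiring some care to identify the lattice-theoretic join and meet with the correct geometric constructs.
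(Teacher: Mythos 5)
Your treatment of items (1)--(4) coincides with the paper's (equally brief) argument: each $\equiv_r$ is preserved by contractions, the intersection of a family $\equiv_{r_i}$ is $\equiv_{\bigwedge_i r_i}$, algebraicity holds because $\Cong_d(E)$ is the congruence lattice of an algebra, and $\delta(x,y)\subseteq{\equiv_{r}}$ with $r:=d(x,y)$ by minimality. For item (5) your reduction of the biconditional to ``$\equiv_{r\vee s}\subseteq{\equiv_s\circ\equiv_r}$ for all $r,s$ iff $\mathbf E$ is convex'' is exactly the paper's argument and is correct; however, the side claim that commutation is automatic ``by symmetry of equivalence relations'' is false in general: two equivalence relations need not commute, since what symmetry gives for free is only $(\equiv_r\circ\equiv_s)^{-1}={\equiv_s\circ\equiv_r}$, and equality requires the composition itself to be symmetric. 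The claim happens to be harmless here, because under convexity both compositions equal $\equiv_{r\vee s}$ and therefore coincide, but it should not be asserted as an unconditional fact.

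The genuine gap is in item (6), which the paper itself does not prove but delegates to Proposition 3.6.12 of \cite{pouzet-rosenberg}. Your sketch plausibly handles the passage from hyperconvexity to the sublattice structure (joins being compositions, by (5)) and the $2$-Helly argument for one of the two meet-distributive inequalities is on the right track. But the converse direction --- from complete meet-distributivity of $\Eqv_d(E)$ back to hyperconvexity --- is not established: hyperconvexity demands a common point for an \emph{arbitrary}, possibly infinite, family of pairwise-intersecting balls $B(x_i,r_i)$ with distinct centers and distinct radii, and this cannot be obtained by ``iteratively'' applying a binary lattice identity, since an infinite iteration produces no limit point in this discrete setting. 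You concede this obstacle yourself, which is candid, but it means the proof of (6) is incomplete as written; one must either reproduce the argument of \cite{pouzet-rosenberg} or give a direct construction (e.g.\ a transfinite or Zorn-type argument selecting the common point from a suitable intersection of equivalence classes) of the witness.
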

\begin{proof}The first two item are  immediate. Trivially,  each $\equiv_r$ is an equivalence relation and it  is preserved by all contracting maps.  Item (3). Let $\equiv_{r_i}$, $i\in I$ be a family of members of  $\Eqv_d(E)$ then 
$\bigcap_{i\in I} \equiv_{r_i}$ equals $\equiv_r$ where $r:= \bigwedge \{r_i: i\in I\}$. Item (4). Since $(x,y)\in \equiv_r$ and $\equiv_r$ is preserved by all contractions, we have $\delta(x,y)\subseteq \equiv_r$. Since $\Cong_{d}(E)$ is the  congruence lattice of an algebra it is algebraic. The fact that $\rho(x,y)$ is algebraic follows from the algebraicity of $\Cong_{d}(E)$. 
 Item (5) is  Proposition 3.6.7 of \cite {pouzet-rosenberg}. We recall the proof.  Let $r,s\in V$. Due to the triangular inequality, we have $\equiv_s\circ \equiv r \subseteq \equiv _{r\vee s}$. We claim that the equality holds whenever  $\mathbf E$ is convex. Let $t:= r\vee s$ and  $(x,y)\in \equiv_t$. Since $d(x,y)\leq t= r\vee s$ and $\mathbf E$ is convex, the closed balls $B(x, r)$ and $B(y, s)$ intersect. If  $z$ belongs to this intersection, then $d(x,z)\leq r$ and $d(y, s)\leq  t$ hence $(x,y)\in \equiv_s\circ \equiv_r$. This proves our claim. Conversely, let $B(x, r)$ and $B(y, s)$ with $d(x,y)\leq r\vee s$, that is $(x,y)\in \equiv_{r\vee s}$. We have $\equiv_r \vee \equiv_ s \subseteq \equiv_{r \vee s}$ and since $r$ and $s$ commute, $\equiv_{s}\circ \equiv_r=  \equiv_r \vee \equiv_s$. Due to our assumption $\equiv_{r} \vee \equiv_{s}= \equiv_{r \vee s}$, hence $\equiv_r \vee \equiv_s$ is the join in $\Eqv_d(E)$;  furthermore, since  $\equiv_{s}\circ \equiv_r=  \equiv_r \vee \equiv_s$ there is some $z\in E$ such that $z\in  B(x, r)\cap B(y, s)$. 
Item (6) is Proposition 3.6.12 of \cite {pouzet-rosenberg}.
\end{proof}

\begin{corollary} If $\mathbf E:=(E,d)$ is convex the map $r\rightarrow \equiv_r$ is  a lattice homomorphism from $V$ into $\Eqv_d(E)$.  
\end{corollary}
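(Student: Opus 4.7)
The plan is to verify directly that the map $\phi : r \mapsto {\equiv_r}$ preserves the three pieces of lattice structure: the bottom $0$, binary meets (where they exist in $V$), and binary joins. The first two will follow from definitions alone; only the third will invoke the hypothesis of convexity, via item (5) of Proposition~\ref{fourre-tout}.

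Preservation of $0$ is immediate from the ultrametric separation axiom: $\phi(0) = \{(x,y)\in E\times E : d(x,y)\leq 0\} = \Delta_E$, which is the bottom of both $\Eqv(E)$ and $\Eqv_d(E)$. For meets, suppose $r\wedge s$ exists in $V$. Then $(x,y)\in \phi(r\wedge s)$ iff $d(x,y)\leq r\wedge s$, iff $d(x,y)\leq r$ and $d(x,y)\leq s$, iff $(x,y)\in \phi(r)\cap\phi(s)$. Since the meet in $\Eqv(E)$ is intersection and $\Eqv_d(E)$ is closed under intersections by item (3) of Proposition~\ref{fourre-tout}, this intersection coincides with the meet computed in $\Eqv_d(E)$. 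Note that this step does not use convexity.

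The preservation of joins is where convexity enters. By item (5) of Proposition~\ref{fourre-tout}, convexity of $\mathbf{E}$ gives both that $\equiv_r$ and $\equiv_s$ commute and that ${\equiv_r}\circ{\equiv_s} = {\equiv_{r\vee s}}$. A standard fact about equivalence relations states that whenever two equivalence relations $\rho$ and $\sigma$ commute, their composition $\rho\circ\sigma$ is itself an equivalence relation and coincides with the join $\rho\vee\sigma$ taken in $\Eqv(E)$. Applying this with $\rho = {\equiv_r}$ and $\sigma = {\equiv_s}$ yields $\phi(r)\vee\phi(s) = {\equiv_r}\circ{\equiv_s} = {\equiv_{r\vee s}} = \phi(r\vee s)$. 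No substantial obstacle arises, since the main content is already in item (5) of the Proposition, and the remaining verifications are essentially tautological.
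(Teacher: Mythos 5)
Your proof is correct and takes essentially the route the paper intends: the corollary is stated there without proof as an immediate consequence of Proposition~\ref{fourre-tout}, with joins handled via item (5) and the standard fact that commuting equivalence relations have their composition as join, exactly as you argue, and meets by the intersection computation underlying item (3). Nothing further is needed.
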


\begin{theorem}\label{thm:hyperconvex} If an ultrametric space  $\mathbf E:=(E,d)$  is hyperconvex, then every member of  $\Cong_{d}(E)$ is a join of equivalence relations of the form $\equiv_r$, for $r\in V$. 
\end{theorem}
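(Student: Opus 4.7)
The plan is to reduce the theorem to the pointwise identity
\[
\delta(x,y) \;=\; \equiv_{d(x,y)} \qquad \text{for every } (x,y)\in E\times E,
\]
where $\delta(x,y)$ is the least congruence of $\Cong_{d}(E)$ containing the pair $(x,y)$, as furnished by Proposition \ref{fourre-tout}(4). Once this identity is in hand, the conclusion is immediate: because $\Cong_{d}(E)$ is an algebraic lattice whose compact elements include all the $\delta(x,y)$'s (same proposition), any $\theta\in\Cong_{d}(E)$ is the join of the compact congruences lying below it, so
\[
\theta \;=\; \bigvee_{(x,y)\in\theta} \delta(x,y) \;=\; \bigvee_{(x,y)\in\theta} \equiv_{d(x,y)},
\]
which exhibits $\theta$ as a join of equivalence relations of the required form $\equiv_r$.

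To establish the identity, the inclusion $\delta(x,y)\subseteq \equiv_{d(x,y)}$ is already recorded in Proposition \ref{fourre-tout}(4). For the reverse inclusion, I would fix $(x,y)\in E\times E$, set $r:=d(x,y)$, and take an arbitrary pair $(x',y')\in E\times E$ with $d(x',y')\leq r$; it suffices to show $(x',y')\in\delta(x,y)$. Define $f:\{x,y\}\to E$ by $f(x):=x'$ and $f(y):=y'$. Since $d(f(x),f(y))=d(x',y')\leq r=d(x,y)$, the map $f$ is non-expansive from the two-point subspace $\{x,y\}$ of $\mathbf E$ into $\mathbf E$. Because $\mathbf E$ is hyperconvex it has the extension property (a fact recorded in Section \ref{section:generalized metric spaces}), so $f$ extends to a non-expansive self-map $\tilde{f}:E\to E$ with $\tilde{f}(x)=x'$ and $\tilde{f}(y)=y'$. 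Since $\delta(x,y)\in\Cong_{d}(E)$, it is preserved by $\tilde{f}$; hence from $(x,y)\in\delta(x,y)$ we obtain $(x',y')=(\tilde{f}(x),\tilde{f}(y))\in\delta(x,y)$, as required.

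The only genuinely non-formal step, and the sole place where hyperconvexity is essentially used, is the production of the non-expansive self-map $\tilde{f}$ carrying $(x,y)$ to an arbitrary prescribed pair $(x',y')$ with $d(x',y')\leq d(x,y)$. I do not expect a real obstacle here: the key observation is that one needs to extend only a two-point map, and for this the extension property supplied by hyperconvexity is exactly the right tool. The remainder of the argument is purely lattice-theoretic, relying only on the algebraicity of $\Cong_{d}(E)$ and the description of its compact elements as the principal congruences $\delta(x,y)$.
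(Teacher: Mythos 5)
Your proposal is correct and follows essentially the same route as the paper: the heart of both arguments is the extension, via hyperconvexity, of the two-point non-expansive map $x\mapsto x'$, $y\mapsto y'$ to a non-expansive self-map, which forces $\equiv_{d(x,y)}$ to lie inside any preserved congruence containing $(x,y)$. The paper phrases the conclusion as ``$\rho$ is the union of all $\equiv_r$ it contains'' rather than routing explicitly through the compact elements $\delta(x,y)$ of the algebraic lattice, but this is only a cosmetic difference.
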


\begin{proof} Let $\rho$ be an equivalence relation on $E$. Let $(x,y)\in \rho$ and $r:= d(x,y)$. We claim that if $\rho$ is preserved by every contracting map then $\equiv_r\subseteq \rho$. Indeed, let $(x',y') \in \equiv_r$. The (partial) map $f$ sending $x$ to $x'$ and $y$ to $y'$ is contracting. Since $\mathbf E$ is hyperconvex, it extends to $E$ to a non-expansive map $\overline f$. Since $\rho$ must be preserved by $\overline f$, and $(x,y)\in \rho$,  we have $(x',y')\in \rho$. This proves our claim. From item (4) of Proposition \ref{fourre-tout} it follows that $\delta(x,y)= \equiv_r$. Also,  $\rho$ is the union of all $\equiv_r$ it contains. 
\end{proof}

\begin{lemma}\label{residual2}  If  $L$ is an algebraic lattice  then  the residual of two compact elements (provided it exists) is compact. 
\end{lemma}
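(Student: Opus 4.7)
The plan is to work directly from the definitions of residual and of compactness. Let $x,y$ be compact elements of $L$ and assume that the residual $x\setminus y$ exists, i.e.\ that $D(x,y)=\{z\in L:x\leq y\vee z\}$ has a least element. To show that $x\setminus y$ is compact, I would take an arbitrary subset $S\subseteq L$ with $x\setminus y\leq \bigvee S$ and produce a finite $F\subseteq S$ such that $x\setminus y\leq \bigvee F$.

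First, from $x\leq y\vee (x\setminus y)\leq y\vee\bigvee S$ I would pass to compact elements using that $L$ is algebraic: each $s\in S$ equals the join of the compact elements it dominates, so $\bigvee S=\bigvee K$ where $K$ denotes the set of compact elements $k\in L$ such that $k\leq s$ for some $s\in S$. Hence $x\leq\bigvee(\{y\}\cup K)$, and every element of $\{y\}\cup K$ is compact, using compactness of $y$ here.

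Second, I would apply compactness of $x$ to this join: there exist finitely many $k_1,\dots,k_n\in K$ with $x\leq y\vee k_1\vee\cdots\vee k_n$. By definition of $K$, pick $s_i\in S$ with $k_i\leq s_i$, and set $F:=\{s_1,\dots,s_n\}$. Then $x\leq y\vee\bigvee F$, so $\bigvee F\in D(x,y)$, and by the defining property of the residual, $x\setminus y\leq \bigvee F$. This proves $x\setminus y$ is compact.

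I do not foresee a real obstacle here; the only thing to be slightly careful about is not to assume that the elements of $S$ are themselves compact, which is why the reduction via algebraicity to a join of compact elements is needed before invoking compactness of $x$.
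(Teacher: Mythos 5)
Your proof is correct and follows essentially the same route as the paper: pass from $x\setminus y\leq\bigvee S$ to $x\leq y\vee\bigvee S$, invoke compactness of $x$ to extract a finite subfamily, and conclude by the minimality of the residual. The only difference is your preliminary replacement of $S$ by the compact elements below its members, which is superfluous: compactness of $x$ applies directly to the join of the family $\{y\}\cup S$ whether or not its members are compact, so neither algebraicity of $L$ nor compactness of $y$ is actually needed at that step (the paper's proof skips it).
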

 \begin{proof} Suppose $x$ and $y$ compact. Suppose $x\setminus y\leq \bigvee Z$ for some subset $Z$ of $L$.  We have $x \leq y \bigvee Z$. Since $x$ is compact, $x\leq y \bigvee Z'$ for some finite $Z'\subseteq Z$. Since $x\setminus y$ is the least $z$ such that $x\leq y\vee z$, we have $x\setminus y\leq \bigvee Z'$ proving that $x\setminus y$ is compact. 
\end{proof}

\begin{theorem} Let $L$ be an algebraic lattice and $K(L)$ be the join-semilattice of compact elements  of $L$. If $L$ is completely meet-distributive then $K(L)$ has an ultrametric structure 
and  $L$ is isomorphic to the set of equivalence relations on  $K(L)$  preserved by all contracting maps on $K(L)$. 
\end{theorem}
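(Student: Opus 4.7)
\emph{Plan.} I would equip $K(L)$ with its canonical ultrametric distance and show that the natural map $\Phi\colon L \to \Cong_d(K(L))$ defined by $\Phi(\ell) := \{(x,y) \in K(L)^2 : d_{K(L)}(x,y) \leq \ell\}$ is a lattice isomorphism. Since $L$ is complete and completely meet-distributive, Lemma~\ref{residual3} yields residuation on $L$; Lemma~\ref{residual2} transfers residuation to $K(L)$, and Lemma~\ref{residual} supplies the ultrametric $d_{K(L)}(x,y) := (x \setminus y) \vee (y \setminus x)$ on $K(L)$ with values in $K(L)$ itself. Each $\Phi(\ell)$ is reflexive and symmetric, transitive by the ultrametric inequality, and preserved by every contraction (since contractions only shrink distances), so $\Phi$ is a well-defined, order- and meet-preserving map into $\Cong_d(K(L))$.

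\emph{Injectivity.} For every compact $c \in K(L)$ one has $d(c, 0) = c$, hence $(c, 0) \in \Phi(\ell)$ iff $c \leq \ell$. Thus $\Phi(\ell_1) = \Phi(\ell_2)$ forces the sets of compact elements below $\ell_1$ and below $\ell_2$ to coincide, and by algebraicity of $L$ we conclude $\ell_1 = \ell_2$.

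\emph{Surjectivity.} Given $\rho \in \Cong_d(K(L))$, I would set $\ell_\rho := \bigvee \{d(a,b) : (a,b) \in \rho\}$ in $L$; the inclusion $\rho \subseteq \Phi(\ell_\rho)$ is immediate. For the converse, take $(x,y)$ with $d(x,y) \leq \ell_\rho$. Since $d(x,y)$ is compact and $\ell_\rho$ is a directed join of compacts, there exist finitely many $(a_i, b_i) \in \rho$ with $d(x,y) \leq p_1 \vee \cdots \vee p_n$, where $p_i := d(a_i, b_i)$. I would then use convexity of $(K(L), d_{K(L)})$ — obtained by forming the convex witness $(x \vee p) \wedge (y \vee q)$ in $L$ and refining it to a compact element by the standard algebraic-lattice approximation — to build a polygonal path $x = z_0, z_1, \ldots, z_n = y$ in $K(L)$ with $d(z_{i-1}, z_i) \leq p_i$. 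Transitivity of $\rho$ then reduces everything to the principal-congruence claim: if $(a,b) \in \rho$ and $d(u,v) \leq d(a,b)$, then $(u,v) \in \rho$.

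\emph{Main obstacle.} The principal-congruence claim says exactly that the partial non-expansive map $a \mapsto u$, $b \mapsto v$ extends to a contraction of $(K(L), d_{K(L)})$; applying this extension to $(a,b) \in \rho$ then yields $(u,v) \in \rho$. Hyperconvexity of $(L, d_L)$ supplies such an extension valued in $L$, and the real work is to keep it inside $K(L)$. The plan is a Zorn-style one-point extension argument, whose crucial input is the finite $2$-Helly property in $K(L)$: for finitely many pairwise-compatible balls $B(x_i, r_i)$, a common point $z \in L$ from hyperconvexity of $L$, combined with compactness of each $x_i$ and $x_i \leq z \vee r_i$, yields compact $c_i \leq z$ with $x_i \leq c_i \vee r_i$, and the finite join $\bigvee c_i \in K(L)$ lies in every ball. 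Propagating this finite control through the transfinite recursion — in particular, controlling the limit stages where one must simultaneously intersect infinitely many balls in $K(L)$ — is the central technical obstacle, and it is precisely the complete meet-distributivity of $L$ (together with algebraicity) that allows it to go through, so that Theorem~\ref{thm:hyperconvex} applies to $(K(L), d_{K(L)})$ and closes the isomorphism.
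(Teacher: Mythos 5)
Your skeleton is exactly the paper's: use Lemma~\ref{residual3} (complete meet-distributivity gives residuation), Lemma~\ref{residual2} (residuals of compacts are compact) and Lemma~\ref{residual} to put the distance $d_V(x,y)=(x\setminus y)\vee(y\setminus x)$ on $V:=K(L)$, then invoke Theorem~\ref{thm:hyperconvex} to see that every congruence of the clone of contractions is a join of relations $\equiv_r$ with $r=d_V(a,b)$. What you add beyond the paper is welcome explicit bookkeeping: the map $\Phi(\ell)=\{(x,y):d_V(x,y)\leq\ell\}$, and the injectivity argument via $d_V(c,0)=c$ for compact $c$ together with algebraicity of $L$ (the paper's three-line proof leaves the actual isomorphism implicit). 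So this is the same route, not a different one.

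The one point that deserves comment is the step you yourself single out as the ``main obstacle'': hyperconvexity of $(K(L),d_V)$. The paper disposes of it in one sentence (``Due to meet-distributivity, $V$ is hyperconvex''), implicitly appealing to the earlier theorem on completely meet-distributive join-semilattices; but $K(L)$ is in general neither complete nor closed under the infinite meets that the witness $z=\bigwedge_i(x_i\vee r_i)$ requires, so some argument is genuinely needed to land back in $K(L)$ — and what Theorem~\ref{thm:hyperconvex} actually consumes is the extension of a two-point partial contraction to all of $K(L)$, which after transfinitely many one-point steps forces you to intersect infinitely many balls, not finitely many. Your finite $2$-Helly argument (replace $z$ by a finite join of compacts $c_i\leq z$ with $x_i\leq c_i\vee r_i$) is correct as far as it goes, but you explicitly leave the limit stages open, so as written the proposal does not close the proof. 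To be fair, you are no more incomplete here than the paper is; you are merely more candid about where the remaining work sits. If you want to finish it, the cleanest route is to prove directly the principal-congruence claim — exhibit, for $d_V(u,v)\leq d_V(a,b)$, an explicit contraction of $K(L)$ sending $a\mapsto u$, $b\mapsto v$ built from residuals (which stay compact by Lemma~\ref{residual2}) — rather than routing through full hyperconvexity of $K(L)$.
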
 

\begin{proof} Due to Lemma \ref{residual} and \ref{residual2}, we may define on  $V:= K(L)$ the distance $d_V$. Due to meet-distributivity, $V$ is hyperconvex. According to Theorem \ref{thm:hyperconvex} each equivalence relation preserved by all contracting operation is a join of equivalence relations of the form $\equiv_r$ for some $r:=d_V(a,b)$. 
\end{proof}
\begin{corollary} If $V$ is a finite distributive lattice, then $V$ is isomorphic to the lattice of equivalence relations preserved by all contracting maps from $V$ into itself, $V$ being equipped with the distance $d_V$. 
\end{corollary}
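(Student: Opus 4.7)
The plan is to derive this Corollary as a direct specialization of the immediately preceding Theorem, taking $L:=V$. What needs to be checked is simply that a finite distributive lattice satisfies all the hypotheses of that theorem, so that its conclusion applies verbatim.

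First I would verify the three structural assumptions required of $L=V$. Since $V$ is finite, it is trivially a complete lattice in which every element is compact, so $V$ is algebraic and its join-semilattice of compact elements $K(V)$ is all of $V$. Since $V$ is finite and distributive, every meet and every join involved in the definition of complete meet-distributivity reduces to a finite expression, so the distributive law $y\vee\bigwedge Z=\bigwedge\{y\vee z:z\in Z\}$ holds for all $Z\subseteq V$ by finite distributivity; thus $V$ is completely meet-distributive. By Corollary \ref{cor:distributive1}, $V$ is residuated, so the distance $d_V(a,b):=(a\setminus b)\vee(b\setminus a)$ of Lemma \ref{residual} is well defined, and it coincides with the distance used in the preceding Theorem.

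With these checks in place, the preceding Theorem applies to $L=V$ and yields that the compact elements $K(V)=V$, equipped with $d_V$, form an ultrametric space (in fact a hyperconvex one, by the discussion around Lemma \ref{residual3} and Proposition \ref{prop-embedding}), and that $V$ is isomorphic to $\Cong_{d_V}(K(V))=\Cong_{d_V}(V)$, the lattice of equivalence relations on $V$ preserved by every non-expansive self-map of $(V,d_V)$. The isomorphism itself is the one exhibited in the proof of the previous Theorem, namely $r\mapsto\,\equiv_r$: by Theorem \ref{thm:hyperconvex}, every $\rho\in\Cong_{d_V}(V)$ is a join of relations of the form $\equiv_r$ with $r\in V$, and injectivity of this map follows from condition (\ref{eq:maximum}), which forces $r=d_V(0,r)$ to be recoverable from $\equiv_r$.

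I do not expect any serious obstacle here, since the Corollary is a packaging statement: the entire machinery—residuation, hyperconvexity, and the description of congruences as joins of balls—has already been deployed in the preceding lemmas and theorem. The only point worth a line of commentary is that, in the finite case, the chain of implications ``distributive $\Rightarrow$ residuated $\Rightarrow$ hyperconvex $\Rightarrow$ congruences are generated by the $\equiv_r$'' collapses into a clean reconstruction of $V$ from its own metric avatar.
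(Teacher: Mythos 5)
Your proposal is correct and follows the same route as the paper, which treats this corollary as an immediate specialization of the preceding theorem: a finite lattice is complete and algebraic with $K(V)=V$, and finite distributivity gives complete meet-distributivity (equivalently residuation, by Corollary \ref{cor:distributive1} and Lemma \ref{residual3}), so the theorem applies directly. Your added remarks on the injectivity of $r\mapsto\,\equiv_r$ via $d_V(0,r)=r$ are a harmless (and welcome) elaboration of details the paper leaves implicit.
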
 

Hence, $V$ is representable as  the lattice of congruences of some algebra. In fact it is  strongly representable  \cite{quackenbush-wolk}. Dilworth  proved that it is representable as the lattice of congruences of some lattice  \cite{gratzer-schmidt}.  Define an \emph{arithmetic lattice} as a sublattice of the lattice $\Eqv(E)$ of equivalence relations on a set $E$ which is distributive and such that these equivalence commutes (see Section \ref{section:arithmetical}
below). Then from $(6)$ of Proposition \ref{fourre-tout} follows that \emph{every finite distributive lattice is representable as an arithmetical lattice}.%

\section{Arithmetical lattices}\label{section:arithmetical}

%
Let $\Eqv(E)$ be he lattice of equivalence relations  on a set $E$. A sublattice $L$ of $\Eqv(E)$ is \emph{arithmetical} (see \cite {pixley})  if it is distributive and pairs of members of $L$ commute with respect to composition, that is 

\begin{equation}
\rho\circ \theta=\theta \circ \rho\;  \text{for every}\; \theta, \rho\in L.
\end{equation}

This second condition amounts to the fact that the join $\theta\vee \rho$ of $\theta$ and $\rho$ in the lattice $L$ is their composition.

A basic example of arithmetic lattice is the lattice of congruences of $(\Z, +)$. The fact that pairs of congruences commute is easy (and interesting). If $\theta$ and $\rho$ are two congruences, take $(x,y)\in \rho\circ \theta$. Then, there is $z\in \Z$ such that $(x,z)\in \theta$ and $(z,y)\in \rho$. Let $r, t\in \N$ such that $\theta= \equiv_r$ and $\rho= \equiv_t$ , then there are $k,\ell\in \Z$ such that $z=x+k.r$ and $y=z+\ell.t$. Set $z':=x+\ell.t$ then $x\equiv_t z'\equiv_r y$ hence $(x, y)\in \equiv_r\circ \equiv_t= \theta\circ \rho$. Thus $\rho\circ \theta= \theta\circ \rho$ as claimed. 
 
As it is well known, if  $\theta$ and $\rho$ are two congruences,  $\theta= \equiv_{t}$ and $\rho=\equiv_{r}$ with $r, t\in \N$,  then  $\theta\vee \rho=\equiv_{ lcd \{t,r\}}$   
whereas, $\theta\wedge\rho= \equiv_{lcm\{t,r\}}$. Distributivity follows.

As it is well known (see \cite{pixley}), arithmetic lattices can be characterized in terms of the \emph{Chinese remainder condition}.

We say that a sublattice $L$ of $\Eqv(E)$ satisfies the \emph{Chinese remainder condition} if:

for each finite set of equivalence relations $\theta_1, \dots \theta_n$ belonging to $L$ and elements $a_1, \dots, a_n\in A$, the system:
\begin{equation}
x\equiv a_i (\theta_i), i=1, \dots, n
\end{equation} 

is solvable iff for all $1\leq i,j\leq n$

\begin{equation}
a_i\equiv a_j  (\theta_i\vee \theta_j).
\end{equation} 
Recall the following classical result:
\begin{theorem} A sublattice $L$ of $\Eqv(E)$ is arithmetical iff it satisfies  the Chinese remainder condition.
\end{theorem}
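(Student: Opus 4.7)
The plan is to prove the two implications separately. For the forward direction I would use induction on the number of congruences in the system, invoking permutability as the base case and distributivity to discharge the inductive compatibility check. For the backward direction I would extract permutability from the two-variable Chinese remainder condition and distributivity from the three-variable one.

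For \emph{arithmetical $\Rightarrow$ CRT}, the non-trivial direction of CRT is solvability. The base case $n=2$ is immediate: if $a_1 \equiv a_2 \, (\theta_1 \vee \theta_2)$, then by permutability $\theta_1 \vee \theta_2 = \theta_2 \circ \theta_1$, so there exists $x$ with $(x,a_1) \in \theta_1$ and $(x,a_2) \in \theta_2$. For $n > 2$, by induction produce $y$ with $y \equiv a_i\,(\theta_i)$ for $i<n$, and set $\theta^{\ast} := \bigwedge_{i<n} \theta_i \in L$. For each $i<n$, the hypothesis $a_i \equiv a_n\,(\theta_i \vee \theta_n)$ combined with $y \equiv a_i\,(\theta_i)$ gives $y \equiv a_n\,(\theta_i \vee \theta_n)$, hence $y \equiv a_n$ modulo $\bigwedge_{i<n}(\theta_i \vee \theta_n)$. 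Distributivity of the lattice yields $\bigwedge_{i<n}(\theta_i \vee \theta_n) = \theta^{\ast} \vee \theta_n$, so the $n=2$ case applied to $\theta^{\ast}$ and $\theta_n$ produces $x$ with $x \equiv y\,(\theta^{\ast})$ and $x \equiv a_n\,(\theta_n)$. Since $\theta^{\ast} \subseteq \theta_i$ for $i<n$, this $x$ solves the whole system.

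For \emph{CRT $\Rightarrow$ arithmetical} I would first establish permutability. Let $\theta,\rho \in L$ and take $(a,c) \in \theta \circ \rho$, witnessed by $b$ with $(a,b) \in \rho$ and $(b,c) \in \theta$; in particular $a \equiv c\,(\theta \vee \rho)$. Apply CRT to the system $x \equiv a\,(\theta)$, $x \equiv c\,(\rho)$: the sole compatibility condition is the displayed relation, which holds. The resulting $x$ witnesses $(a,c) \in \rho \circ \theta$, and the reverse inclusion follows by symmetry. Next, for distributivity, the non-trivial inclusion is $\theta \wedge (\rho \vee \sigma) \leq (\theta \wedge \rho) \vee (\theta \wedge \sigma)$. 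Given $(a,c)$ in the left-hand side, use the just-obtained permutability to rewrite $\rho \vee \sigma = \rho \circ \sigma$, and pick $b$ with $(a,b) \in \sigma$, $(b,c) \in \rho$. Feed the three equivalences into CRT as the system
\[
x \equiv a\,(\sigma), \quad x \equiv c\,(\rho), \quad x \equiv a\,(\theta).
\]
Pairwise compatibility is routine: $a \equiv c\,(\sigma \vee \rho)$ by construction of $b$, $a \equiv a\,(\sigma \vee \theta)$ trivially, and $a \equiv c\,(\rho \vee \theta)$ because $(a,c) \in \theta$. A solution $x$ satisfies $(a,x) \in \theta \wedge \sigma$ and, using $(x,a) \in \theta$ together with $(a,c) \in \theta$, also $(x,c) \in \theta \wedge \rho$. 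Hence $(a,c)$ lies in $(\theta \wedge \rho) \circ (\theta \wedge \sigma) \subseteq (\theta \wedge \rho) \vee (\theta \wedge \sigma)$.

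The main obstacle I anticipate is the distributivity direction: proving it directly from a two-variable CRT would be circular, so the key idea is to load the target equivalence $\theta$ into the Chinese remainder system as a third constraint, which produces a single element simultaneously belonging to $\theta \wedge \sigma$ on one side and $\theta \wedge \rho$ on the other. The forward direction, by contrast, is routine once one notices that the lattice identity $\bigwedge_{i<n}(\theta_i \vee \theta_n) = (\bigwedge_{i<n}\theta_i) \vee \theta_n$ is exactly what bridges the inductive step.
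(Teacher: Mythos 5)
Your proof is correct. The paper itself gives no proof of this statement (it is quoted as a classical result with a reference to Pixley), so there is nothing internal to compare against; your argument is the standard one and it is complete. Both halves check out against the paper's conventions: the induction in the forward direction correctly reduces the $n$-congruence system to the two-congruence case via the identity $\bigwedge_{i<n}(\theta_i \vee \theta_n) = \bigl(\bigwedge_{i<n}\theta_i\bigr) \vee \theta_n$, and in the converse direction the trick of adjoining $x \equiv a\,(\theta)$ as a third constraint does yield the nontrivial distributive inequality, with all three pairwise compatibility conditions verified. The only steps left implicit are routine: that for permuting equivalence relations the composition $\rho\circ\sigma$ is itself an equivalence relation and hence equals the join $\rho\vee\sigma$, and that the binary distributive law extends to finitely many meetands; both are standard and need no elaboration here.
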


\subsection{Chinese remainder condition  and metric spaces} 

Chinese remainder condition can be viewed as a property of balls in a metric space. 
For an example,  in the case of $\Z$, if we may view the congruence class of $a_i$ modulo $r_i$ as the (closed) ball $B(a_i, r_i):= \{x\in E: d(a_i, x)\leq r_i\}$ in a metric space $(E, d)$, we are looking for an element of the intersection of these balls. As we have seen in Section \ref{section:generalized metric spaces}, conditions ensuring  that such element exists were  considered in metric spaces (generalized  or not),  Helly property and convexity being the keywords. 
In our case, we may observe that $\Z$ has a structure of ultrametric space, but  the set of values of the distance is not totally ordered. Ordering  $\N$ by the reverse of divisibility: $n\leq m$ if $n$ is a multiple of $m$, we get a (distributive) complete lattice, the least element being $0$, the largest $1$, the  join $n\vee m$ of $n$ and $m$ being the largest common divisor.   Replace the addition by the join and for two elements $a,b\in \Z$, set $d(a,b):= \vert a- b\vert$. Then $d(a,b)=0$ iff $a=b$; $d(a,b)=d(b,a)$ and $d(a,b)\leq d(a,c)\vee d(c,b)$ for all $a,b, c\in \Z$. With this definition, closed balls are congruence classes.  In an ordinary  metric space, a necessary condition for the non-emptiness of the intersection of two balls $B(a_i, r_i)$ and $B(a_j, r_j)$ is that the distance between centers is at most the sum of the radii, i.e. $d(a_i,a_j)\leq r_i+r_j$.  Here this yields $d(a_i,a_j)\leq r_i\vee r_j$ that is $a_i$ and $a_j$ are congruent modulo $lcd(r_i, r_j)$. When this condition suffices for the non-emptiness of the intersection of any family of balls they are said \emph{hyperconvex} and \emph{finitely hyperconvex} if it suffices for any finite family. Hence, Chinese remainder theorem of arithmetic is the finite hyperconvexity of $\Z$ viewed as an ultrametric space. 

This generalizes. 

Let $L$ be a sublattice of $\Eqv(E)$ that contains $0$ and is stable under the intersection of arbitrary meets. Let $\mathbf E:= (E,d)$ where $d: E\times E \rightarrow L$ is such that  $d(x,y)$ is the least member of $L$ containing $x$ and $y$. Then, trivially,  $\mathbf E$ is a generalized ultrametric space over $L$.

Naturally, we obtain: 
\begin{theorem}
$L$ is arithmetical iff and only if $\mathbf E$ is finitely hyperconvex.  
\end{theorem}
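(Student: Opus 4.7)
The plan is to show that the finite hyperconvexity of $\mathbf{E}$ is literally a reformulation of the Chinese remainder condition for $L$. Once this is established, both implications of the theorem follow from the classical characterization recalled just above, namely that a sublattice of $\Eqv(E)$ is arithmetical if and only if it satisfies the Chinese remainder condition.

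The first step I would carry out is to build the dictionary between closed balls of $\mathbf{E}$ and equivalence classes of members of $L$. For $a \in E$ and $\theta \in L$, one verifies that $B(a,\theta) = [a]_{\theta}$, the $\theta$-class of $a$. Indeed, by construction $d(a,x)$ is the least element of $L$ containing the pair $(a,x)$, so the inequality $d(a,x) \leq \theta$, that is $d(a,x) \subseteq \theta$ as relations in $\Eqv(E)$, is equivalent to $(a,x) \in \theta$, hence to $x \in [a]_{\theta}$. In particular the closed balls of $\mathbf{E}$ are precisely the equivalence classes of the members of $L$.

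The second step is to translate the hyperconvexity premise into the arithmetic one. Given a finite family of balls $B(a_i,\theta_i)$, $i = 1, \dots, n$, the pairwise hyperconvexity condition $d(a_i,a_j) \leq \theta_i \vee \theta_j$ reads $(a_i,a_j) \in \theta_i \vee \theta_j$, that is $a_i \equiv a_j \; (\theta_i \vee \theta_j)$, while a common point $x \in \bigcap_{i} B(a_i,\theta_i)$ is exactly a solution of the system $x \equiv a_i \; (\theta_i)$. Hence the finite hyperconvexity of $\mathbf{E}$ asserts precisely that pairwise compatibility implies solvability of the system; the reverse implication of the Chinese remainder condition is automatic from the transitivity of $\theta_i \vee \theta_j$ applied to any common solution. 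Consequently $\mathbf{E}$ is finitely hyperconvex if and only if $L$ satisfies the Chinese remainder condition, and the theorem follows from the classical equivalence cited above.

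The whole argument is thus essentially a translation and I do not expect a substantial obstacle; the one point requiring a little care is to ensure that the join $\theta_i \vee \theta_j$ featuring in the hyperconvexity premise (computed in the ambient $\Eqv(E)$) coincides with the join entering the Chinese remainder condition (computed in $L$). This is guaranteed by the standing hypothesis that $L$ is a sublattice of $\Eqv(E)$.
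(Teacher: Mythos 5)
Your proof is correct and follows exactly the route the paper intends: the paper offers no written proof, but the preceding discussion (translating congruence classes into closed balls for $\Z$ and invoking the classical equivalence between arithmeticity and the Chinese remainder condition) is precisely the dictionary you construct, namely $B(a,\theta)=[a]_{\theta}$ and the identification of finite hyperconvexity with the nontrivial half of the Chinese remainder condition. Your closing remark about the two joins agreeing is handled correctly by the sublattice hypothesis, so there is nothing to add.
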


In Proposition \ref{equivalenceAR} was stated that hyperconvexity and one-extension property were equivalent provided that the set of values is Heyting. If it is not, a weakening is still valid.  Kaarli \cite{kaarli} obtained the following two results:
\begin{corollary}\label {oneextension}
If $L$ is arithmetical (and stable by arbitrary meets) then every partial function $f: B\rightarrow A$ where $B$ is a finite subset of $A$ which preserves all members of $L$ extends to any element $z$ of $A\setminus B$ to a function with the same property. 
\end{corollary}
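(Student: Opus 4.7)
The plan is to translate the conclusion into a closed-ball intersection problem in the generalized ultrametric space $\mathbf{E}=(E,d)$ described just before the statement, and then to invoke the finite hyperconvexity provided by the preceding theorem. Throughout I write $A$ for the underlying set (the excerpt switches between $A$ and $E$; here they coincide).

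The first step is a dictionary between preservation and non-expansiveness. Because $L$ is closed under arbitrary meets, $d(x,y):=\bigwedge\{\rho\in L : (x,y)\in\rho\}$ is itself a member of $L$, so a map $g$ between subsets of $A$ preserves every $\rho\in L$ if and only if it is non-expansive for $d$, i.e.\ $d(g(x),g(y))\leq d(x,y)$: for the nontrivial direction take $\rho=d(x,y)$ and use that $g$ preserves this particular $\rho$; for the converse use $(x,y)\in\rho\Rightarrow d(x,y)\leq\rho$. In particular the hypothesis on $f$ becomes $d(f(b),f(b'))\leq d(b,b')$ for all $b,b'\in B$.

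The second step reformulates the extension problem: finding an extension of $f$ to $B\cup\{z\}$ preserving every $\rho\in L$ is the same as choosing $f(z)\in A$ with $d(f(b),f(z))\leq d(b,z)$ for each $b\in B$, that is, choosing
$$f(z)\ \in\ \bigcap_{b\in B}\ B\bigl(f(b),\,d(b,z)\bigr).$$
To apply finite hyperconvexity I must verify the pairwise compatibility of these closed balls. This is the main (and only) calculation: for any $b,b'\in B$, non-expansiveness of $f$ combined with the ultrametric triangle inequality in $\mathbf{E}$ yields
$$d(f(b),f(b'))\ \leq\ d(b,b')\ \leq\ d(b,z)\vee d(z,b'),$$
which is precisely the condition $d(f(b),f(b'))\leq r_b\vee r_{b'}$, with $r_b:=d(b,z)$, that the $2$-ball intersection property demands.

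The final step is finite hyperconvexity. Since $B$ is finite and $L$ is arithmetical, the preceding theorem says that $\mathbf{E}$ is finitely hyperconvex, so the intersection displayed above is non-empty; any element in it can be taken as the value of the extension at $z$. The argument has no real obstacle once the translation in step one is in place; the only delicate point is recognising that the pairwise radius condition required for hyperconvexity is forced on us automatically by the original preservation hypothesis together with the triangle inequality in $\mathbf{E}$, so that arithmeticity of $L$ is exactly what is needed to close the argument.
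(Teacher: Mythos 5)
Your proof is correct and is essentially the paper's own argument translated into the metric language: the paper sets up a finite system of congruences (one for each value $b'\in f(B)$) and invokes the Chinese remainder condition, which is exactly the finite hyperconvexity of $(A,d)$ that you invoke, the equivalence between the two formulations being the content of the theorem immediately preceding the corollary. The one point where you go beyond the paper is that you actually verify the pairwise compatibility $d(f(b),f(b'))\le d(b,z)\vee d(z,b')$ via non-expansiveness and the ultrametric triangle inequality, a step the paper leaves as an unproved claim.
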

We recall the proof.
\begin{proof}
Our aim is to find $x\in A$ such that for each $\theta \in L$ and $b\in B$, if $b\equiv z (\theta) $ then  $f(b)\equiv x (\theta)$. Let $B':= f(B)$. For each $b'\in B'$, let $\theta_{b'}$ be the least element of $L$ such that 
\begin{equation} 
b\equiv z(\theta_{b'})
\end{equation}
for all $b$ such that $f(b)= b'$. 

We claim that the system $x \equiv b'(\theta_{b'})$ is solvable and next that any solution yields the element we are looking for.  \end{proof}

\begin{corollary}\label{extension property}
If $L$ is arithmetical on a finite or  countable set $A$, then every partial function $f: B\rightarrow A$ where $B$ is a finite subset of $A$ which preserves all members of $L$ extends to a total function $\overline f$ with the same property. 
\end{corollary}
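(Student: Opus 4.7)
The plan is to iterate Corollary~\ref{oneextension} along an enumeration of $A\setminus B$, using that at every stage the domain remains finite so the hypotheses of that one-point extension lemma are satisfied. Since $A$ is finite or countable, the set $A\setminus B$ can be enumerated as $z_0, z_1, z_2,\ldots$ (the sequence being finite if $A$ is finite). I would set $B_0:=B$ and $f_0:=f$, and then inductively, assuming $f_n : B_n\to A$ is a partial function on a finite domain $B_n\supseteq B$ which preserves all members of $L$, I would apply Corollary~\ref{oneextension} to the element $z_n\in A\setminus B_n$ to obtain an extension $f_{n+1}:B_{n+1}\to A$ with $B_{n+1}:=B_n\cup\{z_n\}$ that still preserves every member of $L$. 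The set $B_{n+1}$ is finite, so the inductive hypothesis is maintained, and the iteration can continue.

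Defining $\overline f:=\bigcup_{n} f_n$, the directed union yields a function whose domain is $B\cup\{z_0,z_1,\ldots\}=A$. To check that $\overline f$ preserves every $\theta\in L$, suppose $(x,y)\in\theta$. Then $x,y\in B_n$ for some $n$ sufficiently large, and since $f_n$ preserves $\theta$ we have
\[
(\overline f(x),\overline f(y))=(f_n(x),f_n(y))\in\theta,
\]
as required.

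The only real point to watch is that Corollary~\ref{oneextension} requires the domain at each step to be finite; this is automatic from the construction, because we are adding exactly one new element at each stage. Hence no genuine obstacle arises — the countability of $A$ is precisely what makes a simple transfinite-free induction work, and the arithmeticity of $L$ (used through Corollary~\ref{oneextension}) guarantees that a single-point extension preserving $L$ always exists. If $A$ were uncountable, one would need a stronger hyperconvexity statement to perform a transfinite induction, since at limit stages the domain would already be infinite and Corollary~\ref{oneextension} would no longer directly apply.
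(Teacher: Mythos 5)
Your proposal is correct and follows exactly the paper's own argument: enumerate $A\setminus B$, iterate the one-point extension of Corollary~\ref{oneextension} to build an increasing chain $f_n$ on finite domains $B_n$, and take $\overline f:=\bigcup_n f_n$. The only difference is that you spell out the (routine) verification that the union still preserves each $\theta\in L$, which the paper leaves implicit.
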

\begin{proof}Enumerate the elements of $A\setminus B$ in a list $z_0, \dots z_n\dots$. Set $B_n:= B\cup \{z_m:m<n\}$. Define $f_n: B_n\rightarrow A$ in such a way that $f_0= f$ and $f_{n+1}$ extends $f_{n}$ to the element $z_n$ and to no other. Set $\overline f:= \bigcup_n  f_n$. 
\end{proof}

We note that $\N$ ordered by reverse of divisibility is not meet-distributive. Still, it can be equipped with a distance (given by the absolute value). It is finitely hyperconvex, but it is not hyperconvex. Indeed, an infinite set of equations  does not need to have a solution while every  finite subset has one (for an example, let $a_{2n}:= 2$, $r_{2n}:= 2^{n}$, $a_{2n+1}:=3$, $r_{2n+1}:= 3^{n}$, then $d(a_{2n}, a_{2m})=0\leq r_{2n}\vee r_{2m}$, $d(a_{2n}, a_{2m+1})= 1\leq r_{2n}\vee r_{2m+1}= lcd(2^n, 3^m)=1$).

\subsection{Operations preserving all the congruences of $(\Z, +)$} 
Equip the set  $\Z$ of relative integers with the  operation $+$.  This algebra is a commutative group. As in every commutative group, a  congruence is determined by the class of $0$ (the others being translates). Such a  class is a subgroup. And this  subgroup is of the form $r.\Z$ for some non-negative $r$.  Hence, a congruence on $\Z$   is determined  by a non-negative integer $r$ and is defined by $x\equiv_r y$ if $x-y$ is a multiple of $r$.

  C\'egielski, Grigorieff and Guessarian (CGG), 2014 \cite{cgg1, cgg2} handled the description of (unary) maps preserving all congruences of $(\Z, +)$. Their  description is given in terms of Newton expansion.  If falls in the scope of the study of non expansive maps of an ultrametric space. Indeed, we may see  $\Z$ as an ultrametric space, values  of the distance being  the integers, ordered by multiplication, the least element being $0$ and the largest element $1$.

 The proof of CGC result  is by no means trivial. The first author found in  2016 a few lines proof of the main argument. It was presented with  a proof of GCC result in \cite{pouzet}. We will give it in Lemma \ref{wasdifficult} below.

Let $\mathcal C$ be the set of maps $f:\Z \rightarrow \Z$ which preserve all congruences on $\Z$. It is closed under product, hence it contains all polynomials with integer coefficients. But it contains others (e.g.  the polynomial  $g(x):= \frac{x^2(x-1)^2}{2}$  is a congruence preserving map on $\Z$). It is  \emph{locally closed}, meaning that  $f\in \mathcal C$  iff for every finite subset $A$ of $\Z$, (in fact, every $2$-element subset of $\Z$), the map $f$ coincides on $A$ with some $g \in  \mathcal C$ (in topological terms, $\mathcal C$ is a closed subset of the topological space  $\Z^{\Z}$ of maps $f:\Z \rightarrow \Z$ equipped with the pointwise convergence topology, the topology on $\Z$ being discrete).

Let $n$ be  a non-negative integer, let $lcm(n):= 1$ if $n=0$, otherwise  let $lcm ({n})$ be the least common multiple of $1, \dots n$, i.e. $lcm({n}):= lcm \{1, \dots, n\}$. If $X$ is an indeterminate (as well as a number) we set $X^{\underline 0}:= 1$, $X^{\underline 1}:=X$, $X^{\underline n}:= X\cdot (X-1)\cdot \dots \cdot (X-n+1)$. The \emph{binomial polynomial}  is  ${X \choose n}:= \frac{X^{\underline n}}{n!}$. 
%
%
%
%
%
%
%
%
%

CGG's result can be expressed as follows:
\begin{theorem}\label{CGGthm}
\begin{enumerate}[(1)]
\item Polynomial  functions of the form  ${lcm(n)}\cdot {x \choose n}$ preserve all congruences; 
 \item Every polynomial  function which preserves all congruences  is a finite linear sum with integer coefficients  of these polynomials; 
 \item The set $ \mathcal C$ of maps  $f: \Z\rightarrow \Z$ which preserve all congruences is the local closure of the set of polynomials preserving all congruences.
\end{enumerate} 
\end{theorem}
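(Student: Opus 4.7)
The plan is to prove the three assertions in order, with~(2) providing the main technical content; parts~(1) and~(3) are then either a direct computation or a Newton interpolation argument layered on top of~(2).

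For~(1), fix $r,n\geq 1$. Since $\binom{x}{n}$ takes integer values on $\Z$, so does $\mathrm{lcm}(n)\binom{x}{n}$. To check that it preserves $\equiv_r$, it suffices to show that $r\mid \mathrm{lcm}(n)\bigl[\binom{x+r}{n}-\binom{x}{n}\bigr]$ for every $x\in\Z$. The Vandermonde identity gives
$$\binom{x+r}{n}-\binom{x}{n}\;=\;\sum_{k=1}^{n}\binom{r}{k}\binom{x}{n-k},$$
and the elementary identity $k\binom{r}{k}=r\binom{r-1}{k-1}$ rewrites $\mathrm{lcm}(n)\binom{r}{k}=\tfrac{\mathrm{lcm}(n)}{k}\,r\,\binom{r-1}{k-1}$; since $1\leq k\leq n$ forces $k\mid \mathrm{lcm}(n)$, the prefactor is an integer and each summand is a multiple of~$r$.

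For~(2), recall that an integer-valued polynomial $P$ has a unique Mahler expansion $P(x)=\sum_{n=0}^{d}c_{n}\binom{x}{n}$ with $c_{n}\in\Z$ (the coefficients being the iterated forward differences $c_{n}=\Delta^{n}P(0)$). The claim reduces to the divisibility $\mathrm{lcm}(n)\mid c_{n}$, for then $P$ is exhibited as the integer combination $\sum_{n}(c_{n}/\mathrm{lcm}(n))\cdot \mathrm{lcm}(n)\binom{x}{n}$. Fix a prime $p$ and let $s:=v_{p}(\mathrm{lcm}(n))=\lfloor\log_{p}n\rfloor$; we prove $p^{s}\mid c_{n}$ by induction on $n$. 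Since $P$ preserves $\equiv_{p^{s}}$, the polynomial $Q(x):=P(x+p^{s})-P(x)$ is divisible by $p^{s}$ at every integer, so all its Mahler coefficients $e_{m}$ are divisible by $p^{s}$. The Vandermonde computation yields
$$e_{m}\;=\;\sum_{j\geq 1}c_{m+j}\binom{p^{s}}{j},$$
and the choice $m:=n-p^{s}$ isolates $c_{n}$ in the term $j=p^{s}$. For $1\leq j<p^{s}$, the identity $j\binom{p^{s}}{j}=p^{s}\binom{p^{s}-1}{j-1}$ together with the Lucas-theorem fact $p\nmid \binom{p^{s}-1}{j-1}$ gives $v_{p}(\binom{p^{s}}{j})=s-v_{p}(j)$, while the induction hypothesis at the smaller index $n-p^{s}+j<n$ yields $v_{p}(c_{n-p^{s}+j})\geq v_{p}(\mathrm{lcm}(n-p^{s}+j))=\lfloor\log_{p}(n-p^{s}+j)\rfloor\geq v_{p}(j)$, where the last inequality uses $n\geq p^{s}$ and $p^{v_{p}(j)}\leq j$. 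Each off-diagonal summand therefore has $p$-adic valuation at least $s$, and the divisibility $p^{s}\mid e_{n-p^{s}}$ forces $p^{s}\mid c_{n}$. Running this over all primes $p\leq n$ gives $\mathrm{lcm}(n)\mid c_{n}$.

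For~(3), given $f\in\mathcal{C}$ and a finite $A\subseteq\Z$, translate $A$ into $\{0,1,\dots,N\}$ and form the Newton interpolation polynomial $g(x):=\sum_{n=0}^{N}(\Delta^{n}f)(0)\binom{x}{n}$, which agrees with $f$ on $\{0,\dots,N\}$. By~(2) it suffices to verify $\mathrm{lcm}(n)\mid (\Delta^{n}f)(0)$ for $n\leq N$. The induction of~(2) used the polynomiality of $P$ only to produce the identity $(\Delta^{m}f)(p^{s})-(\Delta^{m}f)(0)=\sum_{j\geq 1}(\Delta^{m+j}f)(0)\binom{p^{s}}{j}$, which holds equally well for any function $f\colon\Z\to\Z$ via the binomial formula $f(x+r)=\sum_{k}\binom{r}{k}\Delta^{k}f(x)$. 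Since $f\in\mathcal{C}$ implies that every $\Delta^{m}f$ preserves $\equiv_{p^{s}}$ (finite differences inherit congruence preservation), the same inductive argument applied to the integers $(\Delta^{n}f)(0)$ delivers the required divisibility, so $g\in\mathcal{C}$ as needed.

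The main obstacle is the divisibility step in~(2): to extract $p^{s}\mid c_{n}$ from the relation $e_{n-p^{s}}=\sum_{j\geq 1}c_{n-p^{s}+j}\binom{p^{s}}{j}$, one must verify, through Kummer's formula for $v_{p}(\binom{p^{s}}{j})$ combined with the induction hypothesis on the lower Mahler coefficients, that every off-diagonal term already carries $p$-adic valuation at least $s$. Once this lemma is established, parts~(1) and~(3) are essentially formal.
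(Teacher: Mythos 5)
Your proof is correct, but its core---part (2)---takes a genuinely different route from the paper's. For (1) you argue exactly as the paper does (Vandermonde plus $k\binom{r}{k}=r\binom{r-1}{k-1}$ and $k\mid \mathrm{lcm}(n)$). For (2) the paper gives an induction on the degree that leans on (1): writing $P=\sum_k\lambda_k\binom{X}{k}$ and truncating to $Q:=P-\lambda_n\binom{X}{n}$, the inductive hypothesis (applied to $Q$, which agrees with $P$ on $\{0,\dots,n-1\}$) gives $\mathrm{lcm}(k)\mid\lambda_k$ for $k<n$, so $Q$ preserves all congruences by (1), and then $\lambda_n=(P(n)-P(k))-(Q(n)-Q(k))$ is divisible by $n-k$ for every $k<n$, hence by $\mathrm{lcm}(n)$; this uses only the congruences $\equiv_1,\dots,\equiv_n$ sampled at the points $0,\dots,n$ and immediately yields the paper's corollary that a degree-$n$ polynomial preserving $\equiv_1,\dots,\equiv_n$ preserves all congruences. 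Your argument instead works one prime at a time with the valuation $v_p\bigl(\binom{p^s}{j}\bigr)=s-v_p(j)$ and a strong induction on the Mahler coefficients; it is correct (the off-diagonal estimate $v_p(c_{n-p^s+j})\ge v_p(j)$ does close the induction, since $n-p^s+j\ge j\ge p^{v_p(j)}$), it needs only the prime-power congruences $\equiv_{p^s}$ with $p^s\le n$, and it does not route through (1)---but it costs you Kummer's formula where the paper needs nothing beyond (1). For (3) you also simplify: the paper first extends $f$ restricted to $A$ to an interval via the Chinese-remainder argument (Lemma \ref{chinese}) and then interpolates, whereas you interpolate $f$ directly on an interval containing a translate of $A$ and re-run your induction on the finite differences $(\Delta^nf)(0)$, using that finite differences of congruence-preserving maps again preserve congruences; since $f$ is totally defined this is legitimate and the extension step is not needed. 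Two small slips to fix: in (3) the reduction to $\mathrm{lcm}(n)\mid(\Delta^nf)(0)$ rests on (1), not (2), since what you need is that integer combinations of the $\mathrm{lcm}(n)\binom{x}{n}$ preserve all congruences; and you should record the (trivial) converse inclusion that the local closure is contained in $\mathcal C$, i.e.\ that $\mathcal C$ is locally closed, which the paper notes just before the theorem.
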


A more compact  form is given in $(b)$ of Lemma \ref{onepolynomialextension} below. Note that 
being closed  in  the set $\Z^{\Z}$ of all maps $f :\Z\rightarrow \Z$ endowed with the pointwise convergence topology, the set $\mathcal C$ is a Baire subset of $\Z^{\Z}$.  Hence, it is uncountable (apply Lemma  \ref{onepolynomialextension}, or observe that it has no isolated point and apply Baire theorem). In particular,  it contains   functions which are not polynomials. A striking example using Bessel functions is given in CGG's paper.    

%
We give the proof of Theorem \ref{CGGthm} below. Note first this:
\begin{lemma}\label{smalltrick}  Let $f(x):= {\lambda_k}\cdot {x\choose k}$. If $f$ preserves the congruences $\equiv_{i}$ for all $i:=0, \dots, k$ then $\lambda_k$ is a multiple of $lcm(k)$.
\end{lemma}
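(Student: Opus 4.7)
The plan is to test the hypothesis on simple pairs of integers and read off the divisibility conditions directly. The key observation is that the falling-factorial form of the binomial polynomial makes $\binom{j}{k}$ vanish whenever $j \in \{0, 1, \dots, k-1\}$.

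\medskip

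\noindent\textbf{Step 1: Reduce to divisibility conditions.} For each $i \in \{1, \dots, k\}$, I want to exhibit two integers $x,y$ with $x \equiv y \pmod i$ for which $f(x) - f(y)$ is (up to sign) exactly $\lambda_k$. The natural candidates are $x := k$ and $y := k - i$, which are congruent modulo $i$ and both lie in $\{0, 1, \dots, k\}$.

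\medskip

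\noindent\textbf{Step 2: Evaluate $f$ at these points.} Compute $f(k) = \lambda_k \binom{k}{k} = \lambda_k$. For $y = k - i$ with $1 \leq i \leq k$ one has $0 \leq k-i \leq k-1$, so one of the factors $(k-i), (k-i-1), \dots, (k-i-k+1)$ of $(k-i)^{\underline k}$ equals zero, whence $\binom{k-i}{k} = 0$ and thus $f(k-i) = 0$.

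\medskip

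\noindent\textbf{Step 3: Combine.} Since $f$ preserves $\equiv_i$ and $k \equiv k-i \pmod i$, we obtain $\lambda_k = f(k) - f(k-i) \equiv 0 \pmod i$, i.e.\ $i \mid \lambda_k$. As this holds for every $i \in \{1, 2, \dots, k\}$, we conclude that $\mathrm{lcm}(k) = \mathrm{lcm}\{1,\dots,k\}$ divides $\lambda_k$, as required. (The case $i = 0$ corresponds to equality and is automatic.)

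\medskip

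There is no real obstacle: the proof is essentially a one-liner once one notices that the polynomial $\binom{X}{k}$ has $0, 1, \dots, k-1$ as roots, which lets us isolate the coefficient $\lambda_k$ by testing the congruence hypothesis against the pair $(k, k-i)$. The only mild subtlety is choosing the pair of inputs so that both land in the range where $\binom{\cdot}{k}$ is known explicitly; the choice $y = k - i$ does this uniformly for all $i \leq k$.
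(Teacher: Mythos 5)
Your proof is correct and is essentially identical to the paper's: both evaluate $f$ at $0,1,\dots,k$, use that $\binom{j}{k}=0$ for $j<k$, and apply preservation of $\equiv_i$ to the pair $(k,k-i)$ (the paper phrases it as $\equiv_{k-i}$ applied to $(k,i)$, which is the same computation up to reindexing). Nothing further to add.
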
 
\begin{proof}
 For $i:=0, 1, \dots, k-1$, we have $f(i)=0$. If $f$ preserves $\equiv_{k-i}$, $f(k)=f(k)-f(i)$ is a multiple of $k-i$, hence  
 $f(k)$ is a multiple of $k, k-1, \dots, 1$. Since $f(k)= \lambda_k$, the result follows. 
\end{proof}

Next,  we prove that $(1)$ of Theorem \ref {CGGthm} holds. 
\begin{lemma}\label{wasdifficult} Let $n$ be  a non-negative integer and  $f_n(x):= {lcm(n)}\cdot {x\choose n}$. Then $f$ preserves all congruences. \end{lemma}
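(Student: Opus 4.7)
The plan is to reduce the general congruence-preservation condition to a single divisibility identity and then crack it open with Vandermonde. Since any pair $a \equiv b \pmod m$ arises by iterating translation by $m$, it suffices to show that for every $a \in \Z$ and every positive integer $m$,
\[
m \;\Big|\; f_n(a+m) - f_n(a).
\]
(Congruence preservation is transitive under such one-step shifts, so this reduction is routine.)

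Next I would expand the difference using the Vandermonde identity
\[
\binom{a+m}{n} \;=\; \sum_{k=0}^{n}\binom{m}{k}\binom{a}{n-k},
\]
which gives
\[
f_n(a+m) - f_n(a) \;=\; \mathrm{lcm}(n) \cdot \sum_{k=1}^{n}\binom{m}{k}\binom{a}{n-k}.
\]
The crux is to show that each summand $\mathrm{lcm}(n)\binom{m}{k}$ is divisible by $m$. Here I would use the absorption identity $\binom{m}{k} = \tfrac{m}{k}\binom{m-1}{k-1}$ to rewrite
\[
\mathrm{lcm}(n)\binom{m}{k} \;=\; m \cdot \frac{\mathrm{lcm}(n)}{k} \cdot \binom{m-1}{k-1}.
\]
Since $1 \leq k \leq n$, the integer $k$ divides $\mathrm{lcm}(n)$, so $\mathrm{lcm}(n)/k \in \Z$ and the whole expression is an integer multiple of $m$. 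Summing over $k$ yields $m \mid f_n(a+m) - f_n(a)$, completing the argument.

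The only delicate point, and what I expect to be the ``main obstacle,'' is choosing the right identity so that the factor $\mathrm{lcm}(n)$ absorbs the denominator $k$ coming from $\binom{m}{k}$; once one spots the absorption identity $\binom{m}{k} = \tfrac{m}{k}\binom{m-1}{k-1}$ the computation is immediate. Note that this argument is sharp in the sense of Lemma \ref{smalltrick}: the coefficient $\mathrm{lcm}(n)$ is exactly what is needed, since taking $a = 0$ and $m = k$ for $k \leq n$ forces $k \mid \mathrm{lcm}(n)$.
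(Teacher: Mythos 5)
Your proof is correct and follows essentially the same route as the paper's: the same Vandermonde expansion of $\binom{x+m}{n}-\binom{x}{n}$, followed by the absorption identity $\binom{m}{k}=\frac{m}{k}\binom{m-1}{k-1}$ and the observation that $k\mid \mathrm{lcm}(n)$ for $1\leq k\leq n$. The only cosmetic difference is that the paper verifies the Vandermonde identity by a combinatorial counting argument (checking it for infinitely many integer values of $x$), whereas you invoke it directly.
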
 

\begin{proof}

 This means that $f_n(x+k)-f_n(x)$ is divisible  by $k$ for every non-zero $k$. 
 
 This follows from the equalities: 
 
 \begin{equation} 
 {{x+k}\choose n}-{x \choose n}= \sum_{i=1, \dots, n}{x\choose {n-i} }\cdot{k\choose i}= \sum_{i=1, \dots n}{x\choose {n-i}} \frac{k}{i}{{k-1}\choose {i-1}}.
 \end{equation}
 \
 Indeed, $\frac{lcm(n)}{i}$ is an integer for every $i=1,\dots,  n$.  
To prove that the first  equality holds, it suffices to check that its holds for infinitely many values of $x$. So suppose $x, k\in \N$. In this case, the left hand side  counts  the number of $n$-element subsets $Z$ of a $x+ k$-element set union of  two disjoints set $X$ and $K$ of size $x$ and $k$, each $Z$ meeting $K$.  Dividing this collection of  subsets according to the size of their intersection with $K$ yields the right hand size of this equality. 
\end{proof}

We go to the proof of $(2)$ of Theorem \ref{CGGthm}.

We first recall the  description  of polynomial  functions with integer values given  by Polya in 1915 (cf Theorem 22 page 794 in Bhargava \cite{bhargava}).

\begin{lemma}\label{integervalues} Polynomial  functions from $\Z$ to $\Z$ are finite linear sums with integer coefficients of polynomial functions of the form ${x \choose k}$. 
\end{lemma}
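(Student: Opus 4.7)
The plan is to use the forward difference operator $\Delta$, defined on polynomials by $(\Delta g)(x) := g(x+1)-g(x)$, together with the key identity
\begin{equation}
\Delta {x \choose k} = {x \choose k-1} \qquad (k\geq 1),
\end{equation}
which is just Pascal's rule rewritten. The operator $\Delta$ lowers degree by $1$, and $\Delta$ clearly sends any function $\Z\to\Z$ to a function $\Z\to\Z$.

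First I would observe that the family $\left({x\choose k}\right)_{k\geq 0}$ is a $\Q$-basis of $\Q[x]$, because ${x\choose k}$ has degree exactly $k$ with leading coefficient $1/k!$. Consequently every polynomial $f\in\Q[x]$ of degree $d$ admits a unique expansion
\begin{equation}
f(x) = \sum_{k=0}^{d} a_k\, {x \choose k}
\end{equation}
with rational coefficients $a_k$. The whole content of the lemma is therefore that, when $f$ takes integer values on $\Z$, each $a_k$ is in fact an integer.

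Next I would identify the coefficients: iterating the identity above gives $\Delta^{j}{x\choose k} = {x\choose k-j}$ for $j\leq k$, while $\Delta^{j}{x\choose k}=0$ for $j>k$. Evaluating at $x=0$ and using ${0\choose 0}=1$ and ${0\choose m}=0$ for $m\geq 1$ yields $(\Delta^{j}{x\choose k})(0) = \delta_{j,k}$. Applying $\Delta^{k}$ to the expansion of $f$ and evaluating at $0$ therefore gives the closed formula
\begin{equation}
a_k = (\Delta^{k} f)(0) \qquad (0\leq k \leq d).
\end{equation}

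Finally, since $f$ maps $\Z$ to $\Z$ by hypothesis and $\Delta$ preserves this property, each iterate $\Delta^{k}f$ also maps $\Z$ to $\Z$; in particular $(\Delta^{k}f)(0)\in \Z$, so $a_k\in\Z$. There is no real obstacle here; the only point one must be careful about is that the expansion in binomial polynomials is a priori with rational coefficients, and the integrality of the coefficients has to be extracted via the finite-difference formula rather than assumed.
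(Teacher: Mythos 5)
Your proof is correct. It takes a genuinely different route from the paper's: the paper expands $P$ in the basis $\left({X\choose k}\right)_k$ with a priori real coefficients and then proves integrality of the $\lambda_k$ by induction on the degree, peeling off the top term via the auxiliary polynomial $Q:=\lambda_0+\dots+\lambda_{n-1}{X\choose n-1}$ and using that $Q$ agrees with $P$ at $0,\dots,n-1$ and that $\left[{X\choose n}\right](X=n)=1$, so that $\lambda_n=P(n)-Q(n)\in\Z$. You instead produce the closed formula $a_k=(\Delta^k f)(0)$ via the finite-difference operator and the identity $\Delta{x\choose k}={x\choose k-1}$, and integrality then falls out because $\Delta$ preserves integer-valuedness. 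The two arguments carry the same information --- both ultimately use only the values $f(0),\dots,f(d)$, so both establish the slightly stronger statement that integrality at $d+1$ consecutive integers suffices --- but yours buys an explicit Newton forward-difference formula for the coefficients, while the paper's induction is more elementary and requires no operator machinery. Both are complete; there is no gap in your argument.
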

\begin{proof}
Let $P$ be a polynomial of degree $n$ over the reals. Since the ${X\choose k}$, $k\in \N$,  have different degrees, they form a basis, hence 
$$P:= \lambda_0+ \dots+ \lambda_k \cdot  {X\choose k}+ \dots+ \lambda_n \cdot {X\choose n}$$ for some reals $\lambda_0, \dots, \lambda_n$. 

Since $[{X\choose k}](X=m)$ is a binomial coefficient (for $k\leq m$), every linear combination with integer coefficients of these polynomials takes integer values. Thus, if the $\lambda_k$'s are integers, $P$ takes integer values. Conversely, suppose  
 that the values of $P$ are integers  for $X:= 0, \dots, n$. A trivial recurrence on the degree  will show that the coefficients are integers. 
Indeed, let $$Q:= \lambda_0+ \dots+ \lambda_k \cdot  {X\choose k}+ \dots+ \lambda_{n-1}\cdot {X \choose n-1}.$$
Since  $Q(k)=P(k)$ for all $k\leq n-1$, each $Q(k)$ is an integer. Hence  induction applies to $Q$ and yields  that all $\lambda_0, \dots, \lambda_{n-1}$ are integers. Now, $P(n)= Q(n)+ 
\lambda_{n}\cdot  [{X\choose n}](X=n )$. Since $\lambda_0, \dots, \lambda_{n-1}$ are integers, $Q(n)$ is an integer; since  $[{X\choose n}](X= n)=1$, it follows that $\lambda_n$ is an integer.  This proves our affirmation about the integrality of the coefficients. 
\end{proof}

One can say a bit more:
\begin{lemma}\label{onepolynomialextension} $(a)$ Every map $f$ from a non-empty finite subset $A$ of $\Z$ and values in $\Z$ extends to a polynomial function with integer values and degree at  most $n$ where $n+1$ is the cardinality of the smallest interval containing $A$. $(b)$ For every map  $f:\Z :\rightarrow \Z$  there are integer coefficients $a_n, n\in \N$, such that  $$f(x)= \sum_{n=0, \infty} a_{n}\cdot P_{n}(x)$$ for every  $x\in \Z$, where $P_{n}$ the polynomial equal to 
 ${X+k\choose 2k}$ if $n=2k$  and equal to ${X+k\choose 2k+1}$ if $n= 2k+1$.
\end{lemma}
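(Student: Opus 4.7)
The proof of the lemma splits into two largely independent claims, and for (a) I would use Newton's forward-difference interpolation. Given $A \subseteq \Z$ finite non-empty with smallest enclosing integer interval $I := \{a, a+1, \ldots, a+n\}$, extend $f$ arbitrarily to an integer-valued function $\tilde f\colon I \to \Z$ and set
$$P(X) := \sum_{k=0}^{n} (\Delta^k \tilde f)(a)\, \binom{X-a}{k},$$
where $(\Delta^k \tilde f)(a) := \sum_{j=0}^{k}(-1)^{k-j}\binom{k}{j}\tilde f(a+j)$ is the $k$-th forward difference at $a$. The standard identity $\tilde f(a+m) = \sum_{k=0}^{m}\binom{m}{k}(\Delta^k \tilde f)(a)$ shows that $P$ interpolates $\tilde f$ throughout $I$; in particular $P$ agrees with $f$ on $A$. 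The forward differences are manifestly integers, and by Lemma \ref{integervalues} each polynomial $\binom{X-a}{k}$ takes integer values on $\Z$, so $P$ is an integer-valued polynomial of degree at most $n$ extending $f$.

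For (b), the polynomials $P_n$ are engineered precisely so that their zero sets form a nested chain exhausting $\Z$. Enumerate the integers as $z_0 := 0$ and, for $k \geq 1$, $z_{2k-1} := -k$, $z_{2k} := k$. Factoring the binomials directly, $P_{2k}(x) = \binom{x+k}{2k}$ vanishes exactly on $\{-k, \ldots, k-1\} = \{z_0, \ldots, z_{2k-1}\}$, and $P_{2k+1}(x) = \binom{x+k}{2k+1}$ vanishes exactly on $\{-k, \ldots, k\} = \{z_0, \ldots, z_{2k}\}$. The polynomial extension of the binomial to integer tops yields $P_{2k}(z_{2k}) = \binom{2k}{2k} = 1$ and $P_{2k+1}(z_{2k+1}) = \binom{-1}{2k+1} = (-1)^{2k+1} = -1$, so $P_n(z_m) = 0$ for $m < n$ while $P_n(z_n) = \pm 1$.

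Given this triangular structure, I would define integer coefficients recursively by $a_0 := f(z_0)$ and, for $n \geq 1$,
$$a_n := \frac{1}{P_n(z_n)}\Bigl(f(z_n) - \sum_{m=0}^{n-1} a_m P_m(z_n)\Bigr),$$
with integrality forced by $P_n(z_n) = \pm 1$. For every $k$, since $P_m(z_k) = 0$ as soon as $m > k$, the a priori infinite sum $\sum_{m\geq 0} a_m P_m(z_k)$ reduces to the finite sum $\sum_{m=0}^{k} a_m P_m(z_k) = f(z_k)$ by construction. Since $\{z_k : k \in \N\} = \Z$, this yields the claimed representation.

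The main obstacle is purely computational: verifying the identity $P_n(z_n) = \pm 1$ cleanly, which hinges on computing $\binom{-1}{2k+1}$ via the polynomial extension of binomials to negative integer tops. Once the triangular system is in place, the rest of the argument is a routine recursion, and the finite interpolation result in (a) is handled independently by Newton's formula.
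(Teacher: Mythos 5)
Your proof is correct and supplies exactly the argument the paper leaves to the reader: for $(a)$, Newton forward-difference interpolation on the full enclosing interval (which correctly heeds the paper's warning that Lagrange interpolation on $A$ alone need not be integer-valued), and for $(b)$, the observation that the zero set of $P_n$ is precisely the initial segment $\{z_0,\dots,z_{n-1}\}$ of the enumeration $0,-1,1,-2,2,\dots$ of $\Z$ while $P_n(z_n)=\pm1$, so the coefficients are determined by a unitriangular, integrally solvable system and the series has only finitely many nonzero terms at each integer. The computations all check, including $P_{2k}(k)=\binom{2k}{2k}=1$ and $P_{2k+1}(-(k+1))=\binom{-1}{2k+1}=-1$.
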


The proof is a bit tedious but not difficult, we leave it to the reader (see \cite{pouzet} for details).  Beware, Lagrange approximation will not do (e.g., in order to extend a map defined on a $2$-element subset, we may need a polynomial of large degree).

We  adapt the proof of Lemma \ref{integervalues} in order to prove $(2)$ of Theorem \ref{CGGthm}. 

\begin{lemma} \label{preservecongruence}Polynomial  functions from $\Z$ to $\Z$ which preserve all congruences are finite linear sums with integer coefficients of polynomial functions of the form $lcm(k)\cdot {x\choose k}$. 
\end{lemma}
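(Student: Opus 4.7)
The plan is to strengthen the proof of Lemma \ref{integervalues} using the analogue of Lemma \ref{smalltrick} at each inductive step. Let $P$ be a polynomial map from $\Z$ to $\Z$ preserving all congruences. By Lemma \ref{integervalues}, write $P = \sum_{k=0}^{n} \lambda_{k} {x \choose k}$ with $\lambda_{k} \in \Z$. The goal is to show by induction on $k \in \{0,1,\dots,n\}$ that $lcm(k)$ divides $\lambda_{k}$; once this is established, the coefficients $\mu_{k} := \lambda_{k}/lcm(k)$ are integers and $P = \sum_{k=0}^{n} \mu_{k}\cdot lcm(k)\cdot {x \choose k}$ has the desired form.

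For the base case $k=0$ one has $lcm(0)=1$, so the claim is trivial. For the inductive step, assume that $lcm(j) \mid \lambda_{j}$ for all $j<k$. Then each polynomial $\lambda_{j}\cdot {x \choose j}$ with $j<k$ is an integer multiple of $lcm(j)\cdot {x \choose j}$, which preserves all congruences by Lemma \ref{wasdifficult}; hence the polynomial
\[
Q(x) := P(x) - \sum_{j=0}^{k-1} \lambda_{j}\cdot {x \choose j} = \sum_{j=k}^{n} \lambda_{j}\cdot {x \choose j}
\]
is a difference of congruence-preserving maps and therefore itself preserves all congruences.

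The key observation is that $Q$ plays the role of the monomial $\lambda_{k}\cdot {x \choose k}$ used in Lemma \ref{smalltrick}, when restricted to the integers $0,1,\dots,k$. Indeed, since ${i \choose j} = 0$ for $j>i$, one gets $Q(i)=0$ for every $i\in\{0,1,\dots,k-1\}$, while $Q(k)=\lambda_{k}$. Applying the hypothesis that $Q$ preserves the congruence $\equiv_{k-i}$ to the pair $(k,i)$ yields that $k-i$ divides $Q(k)-Q(i)=\lambda_{k}$ for each $i\in\{0,1,\dots,k-1\}$; as $i$ ranges over this set, $k-i$ ranges over $\{1,2,\dots,k\}$, so $\lambda_{k}$ is a common multiple of $1,2,\dots,k$, i.e. $lcm(k) \mid \lambda_{k}$, completing the induction.

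There is no real obstacle: the proof is a direct induction in which the inductive hypothesis is used precisely to ensure the polynomial $Q$ is still congruence-preserving, after which the argument of Lemma \ref{smalltrick} applies verbatim to the values $Q(0),\dots,Q(k)$. The only delicate point worth highlighting is the choice to subtract off exactly the lower-degree terms of the Newton expansion (rather than, say, working with $\Delta P$), which is what produces the vanishing $Q(0)=\cdots=Q(k-1)=0$ and isolates $\lambda_{k}$ as the value $Q(k)$.
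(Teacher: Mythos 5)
Your proof is correct and follows essentially the same route as the paper's: both argue by induction on the Newton coefficients, use Lemma \ref{wasdifficult} to certify that the already-handled part of the expansion preserves all congruences, and evaluate at $0,1,\dots,k$ to conclude that each of $1,\dots,k$ divides $\lambda_k$. The only difference is organizational: you subtract the low-degree tail so that the remainder vanishes at $0,\dots,k-1$ and equals $\lambda_k$ at $k$ (mirroring Lemma \ref{smalltrick} directly), whereas the paper truncates the top term and applies the induction hypothesis to the resulting lower-degree polynomial.
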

\begin{proof}
 
Let $P$ be  a polynomial from $\Z$ to $\Z$. According to Lemma \ref{integervalues}
$$P:= \lambda_0+ \dots+ \lambda_k \cdot  {X\choose k}+ \dots+ \lambda_n \cdot {X\choose n}$$  where  $\lambda_0, \dots, \lambda_n$ are integers.
Suppose that $P(k)-P(k')$ is a multiple of $k-k'$ for all  $k, k':=1, \dots, n$. We prove by induction on the degree that $\lambda_k$ is a multiple of $lcm(k)$ for each $k:=1, \dots, n$. 
Let $$Q:= \lambda_0+ \dots+ \lambda_k \cdot  {X\choose k}+ \dots+ \lambda_{n-1}\cdot {X\choose {n-1}}.$$
We have  $Q(k)=P(k)$ for all $k\leq n-1$. Hence, $Q$ satisfies the property,  induction applies and yields  that all $\lambda_k$ are integer  multiples of $lcm(k)$ for $k\leq n-1$. Now, $P(n)= Q(n)+ 
\lambda_{n}\cdot  [{X\choose n}](X= n)$. Since $\lambda_k$ is a multiple of $lcm(k)$ for $k\leq n-1$, it follows from Lemma \ref{wasdifficult} that  $Q$ preserves all congruences, in particular $Q(n)-Q(k)$ is a multiple of $n-k$; since $P(n)-P(k)$ is a multiple of $n-k$,  $P(n)-Q(n)=\lambda_{n}\cdot  [{X\choose n}](X= n)= \lambda_{n}$ is a multiple of $n-k$. Hence $\lambda_n$ is a multiple of $1, \dots, n$. Proving that $\lambda_n$ is a multiple of $lcm({n})$. 
\end{proof}

The proof yields:

\begin{corollary}
If a polynomial of degree $n$ preserves all congruences of the form $\equiv_k$ for $k:=1, \dots n$, it preserves all congruences. 
\end{corollary}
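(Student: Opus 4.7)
My plan is to observe that the proof of Lemma~\ref{preservecongruence} already yields the corollary, after paying attention to which congruences on $P$ are actually used at each step of its induction. First I would apply Lemma~\ref{integervalues} to expand $P = \sum_{k=0}^n \lambda_k \cdot {X \choose k}$ with $\lambda_k \in \Z$. By Lemma~\ref{wasdifficult} and linearity of the preservation property, it then suffices to show $lcm(k) \mid \lambda_k$ for $k = 1, \dots, n$; indeed this would exhibit $P$ as an integer combination of polynomials each preserving every congruence.

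The core step is the induction on $m$ from $1$ to $n$ used in Lemma~\ref{preservecongruence}. At stage $m$, assuming $lcm(j) \mid \lambda_j$ for all $j < m$, I would form the partial sum $Q_m := \sum_{j<m} \lambda_j \cdot {X \choose j}$, which preserves every congruence by Lemma~\ref{wasdifficult}. Two elementary identities---namely $P(j) = Q_m(j)$ for $0 \leq j \leq m-1$ (because ${j \choose k}$ vanishes at non-negative integers $j < k$) and $P(m) - Q_m(m) = \lambda_m$---reduce the task to showing $m - k \mid \lambda_m$ for each $k \in \{0, 1, \dots, m-1\}$. For this, one combines $m - k \mid P(m) - P(k)$ (from the fact that $P$ preserves $\equiv_{m-k}$) with $m - k \mid Q_m(m) - Q_m(k)$. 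Letting $k$ range over $\{0, \dots, m-1\}$ yields divisibility of $\lambda_m$ by each of $1, 2, \dots, m$, hence by $lcm(m)$, closing the induction.

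The point---and the only real content beyond Lemma~\ref{preservecongruence}---is to verify that at stage $m$ of the induction, the only congruences invoked on $P$ have indices $m - k$ lying in $\{1, \dots, m\} \subseteq \{1, \dots, n\}$, since $m \leq n$; these are precisely the congruences supplied by the hypothesis. No real obstacle arises: the argument is a careful rereading of a proof already given, with the bound $m \leq n$ ensuring that we never step outside the supplied family of preserved congruences.
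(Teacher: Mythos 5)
Your proposal is correct and is essentially the paper's own argument: the paper derives the corollary simply by noting that the proof of Lemma~\ref{preservecongruence} "yields" it, and your careful bookkeeping — that at stage $m$ the induction only invokes $\equiv_{m-k}$ for $k\in\{0,\dots,m-1\}$, hence only $\equiv_1,\dots,\equiv_n$ — is exactly the verification that remark leaves implicit. The reduction via Lemma~\ref{integervalues}, the partial sums $Q_m$, and the divisibility of $\lambda_m$ by $1,\dots,m$ all match the paper's proof.
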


\begin{lemma}\label{chinese} Every map $f$ from a finite subset $A$ of $\Z$ and values in $\Z$ which preserves the congruences extends to every $a\in \Z\setminus A$ to a map with the same property. 
\end{lemma}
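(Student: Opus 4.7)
The plan is to observe that this lemma is essentially the one-extension property (Corollary \ref{oneextension}) specialised to the arithmetical congruence lattice of $(\Z,+)$, and to make the underlying use of the Chinese remainder theorem explicit.

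Concretely, given $a\in \Z\setminus A$, for each $c\in A$ set $r_c:= |a-c|\in \N^{*}$. An extension $\overline f$ with $\overline f(a):= b$ will preserve every congruence $\equiv_k$ if and only if, for every $c\in A$ and every $k$ dividing $r_c$, we have $b\equiv f(c)\,(\equiv_k)$, i.e.\ if and only if
\begin{equation*}
b\equiv f(c) \pmod{r_c}\quad\text{for every } c\in A.
\end{equation*}
So the problem reduces to solving this finite system of congruences in $b$.

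By the Chinese remainder theorem for $\Z$ (equivalently, by arithmeticity of the congruence lattice of $(\Z,+)$ and the Chinese remainder condition recalled above), such a system is solvable as soon as it is pairwise compatible, that is, as soon as
\begin{equation*}
f(c)\equiv f(c')\pmod{\gcd(r_c,r_{c'})}\quad \text{for all } c,c'\in A.
\end{equation*}
Now $\gcd(r_c,r_{c'})=\gcd(a-c,\,a-c')$ divides $(a-c')-(a-c)=c-c'$, so any congruence modulo $|c-c'|$ implies the congruence modulo $\gcd(r_c,r_{c'})$. Since $f$ already preserves every congruence of $\Z$ on the finite set $A$, in particular $f(c)\equiv f(c')\pmod{|c-c'|}$, the compatibility condition holds automatically.

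Picking any solution $b$ and setting $\overline f(a):= b$ yields the desired extension. No step is really hard: the only thing to check is the compatibility, which is where the hypothesis that $f$ already preserves all congruences on $A$ is used, and the translation into the hypotheses of the Chinese remainder theorem (i.e.\ writing $r_c=|a-c|$ and using that $\gcd(r_c,r_{c'})\mid c-c'$). One may alternatively simply invoke Corollary \ref{oneextension} directly, since the lattice of congruences of $(\Z,+)$ is arithmetical and closed under arbitrary meets.
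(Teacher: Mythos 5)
Your proof is correct and follows the paper's own route: the paper disposes of this lemma in one line by invoking the Chinese remainder theorem via Corollary \ref{oneextension}, which is exactly the reduction you carry out. You have merely made explicit the computation (the system $b\equiv f(c)\pmod{|a-c|}$ for $c\in A$ and the compatibility check via $\gcd(a-c,a-c')\mid c-c'$ together with the hypothesis on $f$) that the paper leaves implicit.
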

\begin{proof}
This  follows from  the Chinese remainder theorem (see Corollary \ref {oneextension} in the previous subsection). 
\end{proof}
Lemma \ref{onepolynomialextension} becomes:
\begin{lemma} $(a)$ Every map $f$ from a finite subset $A$ of $\Z$ and  values in $\Z$ which preserves all congruences extends to a polynomial function preserving all congruences. $(b)$ Every map  $f:\Z :\rightarrow \Z$ which preserves all congruences is of the form  $$\sum_{n=0, \infty} a_{n}\cdot P_{n}$$ where each $a_n$ is an integer multiple of $lcm(n)$.
\end{lemma}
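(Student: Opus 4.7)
The plan is to prove $(a)$ first, by combining Chinese remainder (Lemma~\ref{chinese}) with polynomial interpolation (Lemma~\ref{onepolynomialextension}$(a)$) and the divisibility argument of Lemma~\ref{preservecongruence}; then $(b)$ follows from $(a)$ together with the expansion of Lemma~\ref{onepolynomialextension}$(b)$.

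\smallskip

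\noindent\emph{Step 1 (part $(a)$).} Let $[m,M]$ be the smallest interval of $\Z$ containing $A$ and set $N:=M-m$. Iterating Lemma~\ref{chinese}, I extend $f$ one point at a time through the elements of $\{m,\dots,M\}\setminus A$, obtaining a congruence-preserving map $\tilde f$ on $\{m,\dots,M\}$. A translation of the domain leaves congruence-preservation invariant, so I may assume $m=0$. Lemma~\ref{onepolynomialextension}$(a)$ then provides a polynomial $P=\sum_{k=0}^{N}\lambda_k\binom{x}{k}$, with $\lambda_k\in\Z$, agreeing with $\tilde f$ on $\{0,\dots,N\}$. The induction in the proof of Lemma~\ref{preservecongruence} uses \emph{only} the divisibilities $(j-j')\mid(P(j)-P(j'))$ for $j,j'\in\{0,\dots,N\}$, which hold by construction; hence it yields $lcm(k)\mid\lambda_k$ for every $k\leq N$. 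By Lemma~\ref{wasdifficult}, each $lcm(k)\binom{x}{k}$ preserves all congruences, so $P$ does too, and $P\restriction A=f$.

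\smallskip

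\noindent\emph{Step 2 (part $(b)$).} By Lemma~\ref{onepolynomialextension}$(b)$, $f=\sum_{n\geq 0}a_n P_n$ with $a_n\in\Z$, the sum being pointwise finite. For each $N$, the coefficients $a_0,\dots,a_N$ are determined by the values of $f$ on a finite interval $I_N$ through an explicit $\Z$-linear inversion. Apply $(a)$ to $f\restriction I_N$ to obtain a polynomial $Q_N$ preserving all congruences whose $P_n$-expansion begins with $a_0,\dots,a_N$. It therefore suffices to show that every polynomial $Q$ preserving all congruences has $P_n$-coefficients in $lcm(n)\Z$. By Lemma~\ref{preservecongruence}, write $Q=\sum_k\mu_k lcm(k)\binom{x}{k}$ with $\mu_k\in\Z$. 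Since both $\{\binom{x}{k}\}$ and $\{P_n\}$ are $\Z$-bases of the module of integer-valued polynomials compatible with the degree filtration, the change of basis $\binom{x}{k}=\sum_{n\leq k}c_{kn}P_n$ has integer coefficients $c_{kn}$, with $c_{kn}=0$ for $n>k$. Hence $a_n=\sum_{k\geq n}\mu_k lcm(k)\, c_{kn}$, and since $lcm(n)\mid lcm(k)$ whenever $n\leq k$, every summand lies in $lcm(n)\Z$; so does $a_n$.

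\smallskip

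\noindent\emph{Main obstacle.} The decisive point is Step~1, which hinges on the observation that the induction in Lemma~\ref{preservecongruence} is \emph{local}: it exploits only divisibilities $(j-j')\mid(P(j)-P(j'))$ for $j,j'$ in a finite interval, not globally on $\Z$. Once this is recognised, the Chinese remainder extension of Lemma~\ref{chinese} feeds perfectly into polynomial interpolation. Step~2 then reduces to verifying that the triangular change of basis between $\{\binom{x}{k}\}$ and $\{P_n\}$ is an integer matrix, which is immediate from the fact that both are $\Z$-bases of the integer-valued polynomials.
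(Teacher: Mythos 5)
Your proof is correct and follows the same strategy as the paper's: extend $f$ by the Chinese remainder lemma (Lemma~\ref{chinese}) to a finite interval, interpolate, and exploit the fact that the divisibility induction only uses differences of points inside that interval. The one (harmless) deviation is in part $(b)$, where you obtain $lcm(n)\mid a_n$ by passing through the $\binom{x}{k}$-expansion of Lemma~\ref{preservecongruence} and an integral triangular change of basis to the $P_n$, whereas the paper runs the induction of Lemma~\ref{smalltrick} directly on the $P_n$-coefficients of the extended map.
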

\begin{proof}
We  extend $A$ to a finite  interval $\overline A$. With  Lemma \ref{chinese}, we extend $f$ to $\overline  A$ to a map $\overline f$ which preserves all congruences. A proof as in  Lemma \ref {onepolynomialextension} will apply. We only need to check that $a_n$ is a multiple of $lcm (n)$ for each $n\in \N$. We do that by induction. We suppose $a_i$ is a multiple of $lcm (i)$ for each $i<n$. We need to prove that $a_n$ is a multiple of $lcm(n)$. The map  $\overline f_{\restriction A_{n+1}}$ preserves the congruences  $\equiv_1, \dots, \equiv_n$, hence, by the proof of Lemma \ref{smalltrick},   $a_n$ is a multiple of $lcm (n)$. 
\end{proof}
%

%
%

\section{Operations preserving  the congruences of other groups and monoids}
There are many results on the preservation of congruences of groups and monoids, this in relation with studies about polynomial completeness, see  \cite{kaarli-pixley}.  In this section we discuss the case of the group $\Z^n$. 
 
Let $n$ be a non-negative integer and $(\Z, +)^n$ the $n$-power of the additive group $\Z$.  Results about the preservation of congruences for $n\geq 2$  and for $n=1$ are   completely different. For $n\geq 2$ there are only countably many operations preserving the congruences: these operations are affine \cite{nobauer}, while for $n=1$ there are uncountably many as we have shown in the previous section. 

Suppose $n\geq 2$. let $k,l <n$, set  $\Z_{(k)}:=  \{(x_{i})_{i<n}:  x_i=0 \; \text{for all}\;  i\not = k\}$, set $\Z_{(k,l)}:= \{(x_{i})_{i<n}:  x_i=0 \; \text{for all} i\not  \in  \{k,l\}\;  \text{and}\;  x_k=-x_l\}$.

\begin{theorem}\label{thm:powerofZ}Let $n$ be a non-negative integer and  $f: 
(\Z, +)^n\rightarrow (\Z, +)^n$. The following properties are equivalent:
\begin{enumerate}[(i)]

\item $f$ preserves all congruences of $(\Z, +)^n$; 
\item $f$ preserves the congruences associated with the subgroups $\Z_{(k)}$ and $\Z_{(k,l)}$ for $k,l<n$;
\item There are $(a_i)_{i<n}\in  (\Z, +)^n$ and $m\in \Z$ such that 
$f((x_i)_{i<n})= (a_i)_{i<n}+ (m\cdot x_0, \dots, m\cdot x_{n-1})$ for all $(x_0, \dots, x_{n-1})\in (\Z, +)^n$. 
\end{enumerate}
\end{theorem}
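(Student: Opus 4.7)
The plan is to prove the cycle $(iii) \Rightarrow (i) \Rightarrow (ii) \Rightarrow (iii)$. The implications $(i) \Rightarrow (ii)$ is trivial, and $(iii) \Rightarrow (i)$ is easy: any congruence of the abelian group $(\Z,+)^n$ is determined by a subgroup $H \leq \Z^n$ (the class of $0$), and if $f(x) = a + (mx_0,\dots,mx_{n-1})$, then $f(x)-f(y) = m(x-y)$ belongs to $H$ whenever $x-y \in H$, since $H$ is closed under integer multiples. So the real content is $(ii) \Rightarrow (iii)$.

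For $(ii) \Rightarrow (iii)$, first observe that the congruences of $(\Z,+)^n$ are shift-invariant, so replacing $f$ by $g(x) := f(x) - f(0)$ yields a map $g$ with $g(0)=0$ that preserves exactly the same congruences. The goal becomes to show $g(x) = (mx_0, \ldots, mx_{n-1})$ for some $m \in \Z$; then $(a_i)_{i<n} := f(0)$ gives the desired expression.

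The first use of the hypothesis is the preservation of $\Z_{(k)}$. Two vectors $x,y$ satisfy $x-y \in \Z_{(k)}$ precisely when they agree on every coordinate $i \neq k$, and preservation requires the same of $g(x)$ and $g(y)$. Letting $k$ range over all indices $\neq i$, this forces the $i$-th coordinate of $g(x)$ to depend only on $x_i$. Hence there exist functions $g_0,\ldots,g_{n-1}\colon \Z \to \Z$ with $g_i(0)=0$ such that
\[
  g(x) = (g_0(x_0), g_1(x_1), \ldots, g_{n-1}(x_{n-1})).
\]
The second use of the hypothesis is the preservation of $\Z_{(k,l)}$ for $k \neq l$. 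Unwinding the definition, this says that whenever $u-v = -(u'-v')$ in $\Z$ (all other coordinates being equal), one must have $g_k(u) - g_k(v) = -(g_l(u') - g_l(v'))$. Setting $u'=0$ (and using $g_l(0)=0$) gives
\[
  g_k(u) - g_k(v) = g_l(u-v),
\]
and then setting $v=0$ gives $g_k = g_l$. Since $n \geq 2$, this comparison can be made for every pair of indices, so all the $g_i$ coincide with a single function $h \colon \Z \to \Z$ satisfying $h(u-v) = h(u) - h(v)$. This is Cauchy's equation on $\Z$, so $h$ is a group endomorphism, and every endomorphism of $(\Z,+)$ has the form $h(x) = mx$ for some $m \in \Z$. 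This completes the proof.

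The main obstacle—though ultimately a short calculation—is the second step: extracting from preservation of the ``anti-diagonal'' subgroups $\Z_{(k,l)}$ both that all coordinate functions $g_i$ coincide and that the common function is additive. Without the relations coming from $\Z_{(k,l)}$, preservation of the $\Z_{(k)}$ alone would only give an arbitrary coordinatewise map, so the crucial extra rigidity built into $\Z^n$ (as opposed to $\Z$) comes precisely from having at least two coordinates to exploit via these diagonal relations.
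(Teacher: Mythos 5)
Your proof is correct, and it uses the two families of subgroups in exactly the roles the paper assigns them; the difference is in execution. The paper's proof is pairwise and citation-based: for each pair $(k,l)$ it identifies $\Z_{(k)}\oplus\Z_{(l)}$ with $A\times A$ for $A=\Z$, observes that the induced map preserves the three equivalences $\simeq_0,\simeq_1,\simeq_2$, and invokes Theorem~\ref{thm:preserve} (Theorem 3.2 of the Delhomm\'e--Miyakawa--Pouzet--Rosenberg--Tatsumi paper) to conclude that the restriction is $(x,y)\mapsto(m_{k,l}x,m_{k,l}y)$, then patches the multipliers together. You instead work globally first: preservation of all the $\Z_{(k)}$ gives the coordinatewise decomposition $g(x)=(g_0(x_0),\dots,g_{n-1}(x_{n-1}))$ on all of $\Z^n$ at once, and the identity $g_k(u)-g_k(v)=g_l(u-v)$ extracted from $\Z_{(k,l)}$ then yields both $g_k=g_l$ and additivity in one stroke. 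This buys you a self-contained argument (you effectively re-prove the relevant special case of Theorem~\ref{thm:preserve} for $A=\Z$) and it makes explicit the globalization step --- that $g(x)_i$ depends only on $x_i$ for arbitrary vectors, not just those supported on two coordinates --- which the paper's sketch leaves implicit. One small caveat that applies to the statement as printed rather than to your argument: the equivalences genuinely require $n\geq 2$ (the standing hypothesis of the surrounding text), since for $n=1$ the subgroups $\Z_{(0)}=\Z$ and $\Z_{(0,0)}=\{0\}$ carry no information and, by the C\'egielski--Grigorieff--Guessarian theorem of the previous section, (i) itself does not imply (iii); your phrase ``since $n\geq2$'' correctly flags where this is used.
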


The proof relies on the  properties of the three equivalence relations $\simeq_i$, for $i\leq2$, on  the square $A\times A$ of an abelian group $A$. These equivalences relations are:
$(x,y)\simeq_1 (x',y')$ if $x=x'$; $(x,y)\simeq_2 (x',y')$ if $y=y'$; $(x,y)\simeq_0 (x',y')$ if $x+y=x'+y'$. 

We recall the following result: %
%

\begin{theorem}(Theorem 3.2 of \cite{delhomme-pouzet})\label{thm:preserve}  A map $f:A\times A \rightarrow A\times A$ preserves $\simeq_i$ for $i=0,1,2$ iff $f(x,y):= (x_0,y_0)+ (h(x), h(y))$ for some $(x_0, y_0)\in A\times A$ and some  additive map $h$ on $A$ (i.e., satisfying  $h(x+y)=h(x)+h(y)$ for all $x,y\in A$). \end{theorem}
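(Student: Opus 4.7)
The easy direction $(\Leftarrow)$ is just verification: if $f(x,y)=(x_0,y_0)+(h(x),h(y))$ with $h$ additive, then the first coordinate of $f(x,y)$ is $x_0+h(x)$, which depends only on $x$, so $\simeq_1$ is preserved; symmetrically $\simeq_2$ is preserved; and the sum of the coordinates of $f(x,y)$ is $x_0+y_0+h(x)+h(y)=x_0+y_0+h(x+y)$, which depends only on $x+y$, so $\simeq_0$ is preserved. I would write this out in one short paragraph and then concentrate on the nontrivial direction.

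For $(\Rightarrow)$, my plan is to first extract the coordinate-wise structure of $f$ from the preservation of $\simeq_1$ and $\simeq_2$. Since $\simeq_1$ collapses pairs with the same first coordinate, the hypothesis that $f$ preserves $\simeq_1$ says exactly that $\pi_1\circ f(x,y)$ depends only on $x$; symmetrically, $\pi_2\circ f(x,y)$ depends only on $y$. Hence there exist maps $g,k:A\to A$ with
\begin{equation*}
f(x,y)=\bigl(g(x),k(y)\bigr)\qquad \text{for all }x,y\in A.
\end{equation*}

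Next I would exploit preservation of $\simeq_0$. Setting $x_0:=g(0)$ and $y_0:=k(0)$, and defining $h_1(x):=g(x)-x_0$, $h_2(y):=k(y)-y_0$, the maps $h_1,h_2$ vanish at $0$ and the preservation of $\simeq_0$ translates to the functional equation
\begin{equation*}
x+y=x'+y' \;\Longrightarrow\; h_1(x)+h_2(y)=h_1(x')+h_2(y').
\end{equation*}
Now the key trick: given $x,y\in A$, apply this with $(x',y')=(x+y,0)$ to get $h_1(x)+h_2(y)=h_1(x+y)$, and with $(x',y')=(0,x+y)$ to get $h_1(x)+h_2(y)=h_2(x+y)$. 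Specializing the first identity to $x=0$ yields $h_2=h_1$; calling this common map $h$, the first identity then reads $h(x)+h(y)=h(x+y)$, so $h$ is additive. Substituting back gives $f(x,y)=(x_0,y_0)+(h(x),h(y))$, as required.

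The proof has no real obstacle once one recognises the coordinate decomposition: the heart of the argument is the two targeted substitutions in the $\simeq_0$ equation, which force $h_1=h_2$ and additivity simultaneously. The slightly delicate point I would be careful about in the writeup is that all three equivalences must be preserved jointly; using only $\simeq_1$ and $\simeq_2$ already gives the product form $f(x,y)=(g(x),k(y))$, but without $\simeq_0$ the functions $g,k$ are completely arbitrary, and neither additivity nor the coincidence of the two ``slopes'' would follow. This is also the reason why the theorem needs an abelian group structure on $A$: the additive translation by $(x_0,y_0)$ and the cancellation used to derive additivity of $h$ both rely on $(A,+)$ being an abelian group.
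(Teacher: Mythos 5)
Your proof is correct and complete: preservation of $\simeq_1$ and $\simeq_2$ forces the product form $f(x,y)=(g(x),k(y))$, and the two substitutions $(x',y')=(x+y,0)$ and $(0,x+y)$ in the $\simeq_0$-equation correctly force $h_1=h_2$ and additivity at once. Note that the survey itself gives no proof of this statement (it is quoted as Theorem 3.2 of the cited paper of Delhomm\'e et al.), so there is nothing in the text to compare against; your argument is the natural one and I see no gap.
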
 

\noindent {\bf Proof of Theorem \ref{thm:powerofZ}}
Implications $(i)\Rightarrow (ii)$ and $(iii)\Rightarrow (i)$ are obvious.  We prove $(ii)\Rightarrow (iii)$. Set $g((x_i)_{i<n}):= f((x_i)_{i<n})-f(0)$. Let $k,l<n$. The map $g$ preserves $\Z_{(k)}$, $\Z_{(l)}$ and $\Z_{(k,l)}$; setting $A:=\Z$ we may identify the direct sum $\Z^{(k)} \oplus \Z^{(l)}$ with $A\times A$ and the restriction of $g$ to the map $\overline {g_{k,l}}$ from $A\times A$ into itself associating to each pair $(x, y)$ the pair $(u,v )$ such that $u= g( (x_i)_{i<n})_k$ and $v= g( (x_i)_{i<n})_l$ for $(x_i)_{i<n}\in \Z^{(k)} \oplus \Z^{(l)}$ such that $x_k=x, x_l=y$. This map  preserves the three equivalences $\simeq_i$ above. Hence $\overline g_{k,l} (x,y)= (h_{k,l} (x) , h_{k,l} (y))$ where $h_{k,l}$ is an homomorphism of $\Z$. Hence, there is some  $m_{k,l}\in \Z$ such that $h_{k,l} (z)= m_{k,l}\cdot z$ for every $z\in A$. If $n=2$ this fact yields the result. If $n>2$ then $m_{k,l}$ is independent of $k$  and then of $l$. This also yields the result. 
\hfill $\Box$

Instead of preserving congruences of a group we may preserve the congruences of a monoid. For the free commutative monoid $\N^{n}$ the results are similar (see \cite{cgg3}).
We conclude by mentionning the following beautiful result of C\'egielski, Grigorieff and Guessarian about the free monoid. 

\begin{theorem} \cite{cgg4}Let $n\geq 3$. A map $g$  preserves all congruences on the free monoid $A^{*}$ with  at least three generators iff there exists $n\in \N$ and $a_0, \dots, a_{n-1} \in A^*$ such that  $g(x)= a_0x a_{1}\dots x a_{n-1}$. for every $x\in A^{*}$. 
\end{theorem}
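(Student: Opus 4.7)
The right-to-left direction is immediate: any map $g(x)=a_0xa_1\cdots xa_{n-1}$ preserves every congruence $\theta$ on $A^{*}$, since $x\,\theta\,y$ implies $a_0xa_1\cdots xa_{n-1}\,\theta\,a_0ya_1\cdots ya_{n-1}$ by repeated use of the compatibility of $\theta$ with left and right multiplication by constants.

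For the converse, suppose $g:A^{*}\to A^{*}$ preserves every congruence, with $|A|\ge 3$. I plan to proceed in three stages. The first stage determines $g$ on single letters. Fix three distinct letters $a,b,c\in A$ and, for each pair $(p,q)$ in $\{a,b,c\}$, let $\equiv_{pq}$ be the congruence generated by $(p,q)$, which is the kernel of the projection $A^{*}\to(A\setminus\{q\})^{*}$ collapsing $q$ to $p$. Preservation forces $g(p)$ and $g(q)$ to have equal lengths and to agree letter-by-letter except at positions whose letters lie in $\{p,q\}$. A short case analysis combining the three pairs $(a,b)$, $(a,c)$, $(b,c)$ shows that at each index either $g(a)_i=g(b)_i=g(c)_i$ is a common letter of $A$, or $(g(a)_i,g(b)_i,g(c)_i)=(a,b,c)$. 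Writing the constant blocks between the latter ``variable'' positions as $w_0,\dots,w_n$ gives $g(a)=w_0aw_1a\cdots aw_n$ and parallel formulas for $g(b),g(c)$; repeating with $(a,b,d)$ for any $d\in A$ (the variable positions being determined by $g(a),g(b)$ alone) extends this to $g(d)=w_0dw_1\cdots dw_n$ for every letter $d$.

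The second stage extends this to all words by strong induction on $|x|$, with target $g(x)=\widehat g(x):=w_0xw_1\cdots xw_n$. Given $x$ and a word $y$ for which $g(y)=\widehat g(y)$ is already known, the congruence $\equiv_{xy}$ generated by $(x,y)$ yields
$$g(x)\,\equiv_{xy}\,g(y)=\widehat g(y)\,\equiv_{xy}\,\widehat g(x),$$
so $g(x)$ lies in the $\equiv_{xy}$-class of $\widehat g(x)$. To collapse this class, I take $y=d^{N}$ for a letter $d$ and $N$ large enough that the substring $d^{N}$ exceeds every $d$-run appearing in $x$ or in any $w_i$; then $y$ occurs in $\widehat g(x)$ only at the inserted positions, and the $\equiv_{xy}$-class of $\widehat g(x)$ reduces to the explicit set $\{w_0z_1w_1\cdots z_nw_n:z_i\in\{x,y\}\}$. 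Running the same argument with a second letter $d'\ne d$ (available precisely because $|A|\ge 3$) and $y'=(d')^{N}$ gives the parallel description, and the two classes intersect over $\{x,d^{N}\}\cap\{x,(d')^{N}\}=\{x\}$, forcing $g(x)=\widehat g(x)$. The value $g(\varepsilon)=w_0w_1\cdots w_n$ is then pinned down by the same device applied to the congruences $(\varepsilon,d^{N})$ and $(\varepsilon,(d')^{N})$, using that $g(d^{N})$ and $g((d')^{N})$ are by then known.

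The main obstacle I expect is the combinatorial verification supporting the explicit description of the $\equiv_{xy}$-classes: one must check that the rewrite system $x\leftrightarrow y$ cannot, via an overlap between an inserted $x$ and the boundary of a neighbouring $w_i$, either destroy or create a hidden occurrence of $y$ inside the fixed segments. This hinges on taking $N$ sufficiently large in terms of $|x|$ and $\max_i|w_i|$, and is precisely where the hypothesis $|A|\ge 3$ becomes essential: it supplies the two independent test words $d^{N}$ and $(d')^{N}$ needed to make the intersection argument yield a singleton. A small amount of extra care is also required for the first few values of $|x|$ (where the constraint that $g(y)$ be already known restricts the choice of $y$), but the same intersection-of-classes principle, applied to short test words chosen so as to avoid the $w_i$'s, handles these by a finite case analysis.
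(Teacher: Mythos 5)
First, note that the paper does not prove this statement at all: it is quoted from \cite{cgg4} and the text immediately after it records that ``no simple proof is known''. So there is no in-paper argument to compare yours against, and any proposal has to be judged on whether it actually closes the combinatorial difficulties that make the theorem hard.

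Your right-to-left direction and your Stage~1 are fine: the congruence generated by a pair of distinct letters $(p,q)$ is indeed the kernel of the projection identifying $p$ with $q$, and the three-letter case analysis correctly rules out the ``swapped'' configuration and produces the blocks $w_0,\dots,w_n$ with $g(d)=w_0dw_1\cdots dw_n$ for every letter $d$. The genuine gap is the central claim of Stage~2, which as stated is false: the $\equiv_{x,d^N}$-class of $\widehat g(x)=w_0xw_1\cdots xw_n$ is \emph{not} in general the set $\{w_0z_1w_1\cdots z_nw_n : z_i\in\{x,d^N\}\}$. Take all $w_i$ empty and $n=2$, so $\widehat g(x)=xx$ (the map $x\mapsto x^2$ is a legitimate target). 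Rewriting both designated occurrences gives $d^{2N}$, which contains $N+1$ occurrences of the factor $d^N$; rewriting one of the interior ones back produces $d^{i}xd^{N-i}$ for any $0\le i\le N$, none of which lie in your four-element set. So iterated rewriting creates occurrences of the relator at non-designated positions, the class blows up, and the intersection of the two classes for $d$ and $d'$ is no longer visibly a singleton. Controlling the classes of a one-relation congruence on a free monoid is exactly the hard core of the theorem (it is a cousin of the word problem for one-relator monoids), and your proposal defers it to ``the main obstacle I expect''. There is also a structural problem with the induction: you induct on $|x|$ but your test word is $d^N$ with $N$ large compared to $|x|$, so $g(d^N)=\widehat g(d^N)$ is not available from the induction hypothesis; the values $g(d^k)$ for all $k$ would have to be established first by a separate argument, which is itself not routine. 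As it stands the proposal is a plausible strategy outline, not a proof, and given the paper's own remark that no simple proof is known, the burden of making Stage~2 rigorous should be presumed substantial.
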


No simple proof is known. The case of the free monoid with two generators is open.

\section{Rigidity and semirigidity}\label{section rigidity}

A relation,  and more generally a set $\mathcal R$ of relations,  on a set $E$ is \emph{rigid} if the only self-map $f$ of $E$ preserving this relation,  or this set of relations,  is the identity. It is \emph{strongly rigid} if the only operations   preserving $\mathcal R$ are the projections. If the relations are reflexive then they are also preserved by the constant maps. So we say that a relation or a set  $\mathcal R$ of relations is \emph{semirigid} if the only self-maps $f$ of $E$ preserving this relation,  or this set of relations,  is the identity  or a constant map. It is \emph{strongly semirigid} if the only operations   preserving $\mathcal R$ are the projections or the constant maps. We refer to  \cite{lan-pos,  benoit-claude, ivo, v-p-h} for sample  of results about these notions and to \cite{couceiro-haddad-pouzet-scholzel} for some generalizations.

A set  $\mathcal R$ of equivalence relations  on a set $E$  is strongly semirigid if and only if it is semirigid. Z\'adori in
1983 has shown in 1983 that there are  semirigid systems  of three equivalence relations on finite sets of size different from $2$ and $4$. 
We present a construction given in \cite{delhomme-pouzet}. It  leads to   the  examples given by Z\'adori and  to  many others and also extends to  some infinite cardinalities.


We recall the following lemma. 
\begin{lemma} (Pierce) If a set  $\mathcal R$ generates by means of joins and meets  the
 lattice of equivalences relations on $E$, $\vert E\vert\not =2$,  then $\mathcal R$ is semirigid.
\end{lemma}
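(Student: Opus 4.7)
The plan is to reduce Pierce's lemma to the assertion that, whenever $|E|\neq 2$, the only self-maps of $E$ preserving every equivalence relation on $E$ are the identity and the constants. The reduction rests on the observation that the collection $\Eqv_{f}(E)$ of equivalence relations on $E$ preserved by a fixed $f:E\to E$ is a sublattice of $\Eqv(E)$. Closure under arbitrary meets is immediate from the definition of preservation. For joins, note that $\rho\vee\sigma$ in $\Eqv(E)$ is the transitive closure of $\rho\cup\sigma$; if $(x,y)\in \rho\vee\sigma$ via a chain $x=x_{0},\dots,x_{n}=y$ whose consecutive pairs lie in $\rho\cup\sigma$, then applying $f$ produces a chain witnessing $(f(x),f(y))\in\rho\vee\sigma$. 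Consequently, if $\mathcal R\subseteq\Eqv_{f}(E)$ and $\mathcal R$ generates $\Eqv(E)$ by meets and joins, then $f$ preserves every equivalence relation on $E$.

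Now suppose $f:E\to E$ preserves every equivalence relation on $E$, $|E|\geq 3$, and $f$ is not the identity; the aim is to show $f$ is constant (the cases $|E|\leq 1$ being trivial). Fix $a\in E$ with $b:=f(a)\neq a$. For any $c\in E\setminus\{a,b\}$, let $\rho_{c}$ denote the equivalence whose unique non-singleton class is $\{a,c\}$. Preservation of $\rho_{c}$ applied to $(a,c)$ forces $(b,f(c))\in\rho_{c}$, i.e.\ either $f(c)=b$ or $\{b,f(c)\}=\{a,c\}$; the latter is impossible since $b\notin\{a,c\}$, so $f(c)=b$.

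It remains to determine $f(b)$. Preservation applied to $(a,b)$ under the equivalence with unique non-singleton class $\{a,b\}$ yields $(b,f(b))\in\rho$ and hence $f(b)\in\{a,b\}$. If $|E|\geq 4$, pick two distinct $d,d'\in E\setminus\{a,b\}$; preservation applied to $(b,d)$ and $(b,d')$ under the corresponding equivalences with unique non-singleton classes $\{b,d\}$ and $\{b,d'\}$ gives $f(b)\in\{b,d\}\cap\{b,d'\}=\{b\}$. If $|E|=3$ and $d$ is the unique element of $E\setminus\{a,b\}$, the same computation yields $f(b)\in\{a,b\}\cap\{b,d\}=\{b\}$. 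Either way $f(b)=b$, so $f$ is the constant map of value $b$, completing the proof. The only genuinely delicate moment is this last case analysis for $f(b)$, and it is precisely here that the hypothesis $|E|\neq 2$ is used: on a two-element set the non-trivial involution preserves both equivalences and shows that semirigidity can fail.
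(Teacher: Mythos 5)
Your proof is correct and follows essentially the same route as the paper's: both come down to the observation that the equivalence relations preserved by a self-map form a sublattice of $\Eqv(E)$, followed by a short case analysis using the atomic equivalences $\langle x,y\rangle$ whose only non-singleton class is $\{x,y\}$. If anything your write-up is slightly more complete, since it makes the sublattice-closure reduction explicit (the paper leaves it implicit) and determines $f$ directly (first $f(c)=b$ for $c\neq b$, then $f(b)=b$) rather than arguing by contradiction.
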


The proof is simple, still it is illustrative. 
\begin{proof}For $x\not =y$, let $<x,y>$ be the equivalence relation such that $\{x,y\}$ is the only non singleton class. We prove that if $f$ preserves all the  $<x,y>$-equivalences then $f$ is the identity or  a constant. Indeed, suppose by way of contradiction that $f$ is neither the identity, nor a constant map. Not being the identity, there is some $x$ with $f(x)\not =x$. The elements $x$ and $f(x)$ are equivalent w.r.t. $<x,f(x)>$ hence $f(x)$ and $f(f(x))$ must be equivalent w.r.t. $<x,f(x)>$. 
This gives two cases
1)$f(f(x))=f(x)$ and 2) $f(f(x)=x$.  If $1)$ holds, then since $f$ is not constant, there is some $z$ such that $f(z)\not =f(x)$; but since $z$ and $x$ are equivalent w.r.t. $<z,x>$, $f(z)$ and $f(x)$ must be equivalent w.r.t. $<z,x>$. This implies $f(z)=f(x)$,  a contradiction. 

If $2)$ holds, pick any $z\not \in \{x,f(x)\}$. From the preservation of $<x,z>$,  get $f(z)=f(x)$; from the preservation of $<f(x), z>$ get $f(z)= x$, which is impossible. \end{proof}

According to Strietz \cite{Stri 77} 1977,  if $E$ is finite with at least  four elements,  four equivalences are needed to generate the lattice of equivalence relations on
$E$ by means of joins and meets.  In that respect, Z\'adori result proving that one can find  semirigid systems made of three equivalence relations is interesting.  His examples seem   a bit mysterious. Our construction shows that this is not the case.

\subsection{Zadori's examples}

Zadori's result reads as follows.

\begin{theorem}\label{zadori}  Let $A:= \{0, \dots, n-1\}$ with $n=3$
or $n>4$. The following system of three equivalence
relations $\rho,\sigma,\tau$  on $A$  is  semirigid.
\begin{eqnarray*}
\rho&=&\{\{0\},\{1,2,\ldots,k\},\{k+1,\ldots,2k+1\}\},\\
\sigma&=&\{\{0,1,k+1\},\{2,k+2\}\ldots,\{k,2k\}\},\\
\tau&=&\{\{1,k+2\},\{2,k+3\}\ldots,\{0,k,2k+1\}\}.
\end{eqnarray*}
\end{theorem}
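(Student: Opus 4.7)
The plan is to show that any self-map $f\colon A\to A$ preserving the three equivalences $\rho,\sigma,\tau$ is either constant or the identity, by case-splitting on the induced action on $\rho$-classes and propagating values along a Hamiltonian chain of $\sigma$- and $\tau$-pairs. The case $n=3$ is immediate: the three listed equivalences are the atoms of the equivalence lattice on $\{0,1,2\}$ and generate it, so Pierce's lemma applies. The substantial case is $n=2k+2\ge 6$, which I treat next.

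First I record the block structure. The $\rho$-classes $B_0=\{0\}$, $B_1=\{1,\dots,k\}$, $B_2=\{k+1,\dots,2k+1\}$ have pairwise distinct sizes $1,k,k+1$. The unique $3$-element $\sigma$-block $\{0,1,k+1\}$ and the unique $3$-element $\tau$-block $\{0,k,2k+1\}$ are each transverse to the $\rho$-partition, meeting each $B_i$ in one point. The remaining $\sigma$-blocks are pairs $\{i,k+i\}$ for $2\le i\le k$ plus the singleton $\{2k+1\}$; the remaining $\tau$-blocks are pairs $\{i,k+i+1\}$ for $1\le i\le k-1$ plus the singleton $\{k+1\}$. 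All $\sigma$- and $\tau$-pairs together inside $B_1\cup B_2$ form a Hamiltonian path whose endpoints are the $\tau$-singleton $k+1$ and the $\sigma$-singleton $2k+1$, and whose edges alternate between $\sigma$-pairs and $\tau$-pairs.

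Preservation of $\rho$ induces a map on $\{B_0,B_1,B_2\}$; I case-split on this action. In the \emph{identity pattern} $f(B_i)\subseteq B_i$ we get $f(0)=0$, and transversality of the two $3$-blocks forces $f(1)=1$, $f(k+1)=k+1$, $f(k)=k$, $f(2k+1)=2k+1$; walking along the Hamiltonian path, each successive pair has one already-fixed vertex and forces the other inside the appropriate $\rho$-class, so induction yields $f=\mathrm{id}$. In the \emph{swap pattern} $f(B_1)\subseteq B_2$ and $f(B_2)\subseteq B_1$, intersecting the constraints that the $\sigma$- and $\tau$-classes through $1$ impose on $f(1)$ pins $f(1)\in\{k+2,\dots,2k\}$; writing $f(1)=k+s$ with $2\le s\le k$ and propagating along the path gives $f(j)=k+s-j+1$, which at $j=s$ produces $f(s)=k+1$ and then, via the $\tau$-class through $s$, forces the image of the next vertex into the $\tau$-singleton $\{k+1\}\subseteq B_2$, contradicting $f(B_1)\subseteq B_2\setminus\{k+1\}$ or $f(B_2)\subseteq B_1$. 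In the \emph{collapsing patterns} where some $B_i$ with $i\ge 1$ is sent into $B_0$, the two transverse $3$-blocks force every undetermined image into $\{0,1,k+1\}\cap\{0,k,2k+1\}=\{0\}$ (using $k\ge 2$), and path-propagation gives $f\equiv 0$; in the residual sub-case $f(B_2)\subseteq B_1$, transversality of the $3$-blocks forces $f(1)=f(k+1)$ and $f(k)=f(2k+1)$, and then propagation along the Hamiltonian path contradicts $k\in\{0,1,k+1\}$, impossible for $k\ge 2$.

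The main obstacle is the swap pattern together with the residual collapse $f(B_2)\subseteq B_1$, where one must track $\sigma$- and $\tau$-constraints simultaneously to exclude all non-trivial matchings between $B_1$ and $B_2$; the $\sigma$- and $\tau$-singletons $\{2k+1\}$ and $\{k+1\}$ play a decisive role, since any exchange between $B_1$ and $B_2$ eventually forces some image to land in one of them with the wrong $\rho$-class. The hypothesis $k\ge 2$ (equivalently $n\ne 2,4$) enters through the inequalities $1\ne k$ and $\{0,1,k+1\}\cap\{0,k,2k+1\}=\{0\}$; non-trivial endomorphisms do survive when $n=4$ (i.e.\ $k=1$), consistent with the exclusion of $4$ from the theorem.
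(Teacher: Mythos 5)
Your route is genuinely different from the paper's: the paper deduces Theorem \ref{zadori} by realizing $A$ as the planar set $T'_{n,2}\subseteq\Z\times\Z$, identifying $\rho,\sigma,\tau$ with the kernels of $p_1$, $p_2$ and $p_1+p_2$, and applying the geometric criterion of Theorem \ref{maintheo} (monogenic, no center of symmetry), which rests on the affineness result of Theorem \ref{thm:preserve}. Your direct combinatorial attack is legitimate, and the parts you carry out are sound: the Hamiltonian zigzag $k+1,1,k+2,2,\dots,2k,k,2k+1$ with alternating $\sigma$- and $\tau$-edges is correctly identified, the identity pattern does give $f=\mathrm{id}$, and the swap computation $f(1)=k+s$, $f(j)=k+s-j+1$, terminating at the $\tau$-singleton $\{k+1\}$, does kill that case.

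The gap is that the case split on the induced map $\bar f$ of $\{B_0,B_1,B_2\}$ is not exhaustive, and one missing case is substantive. You treat $\bar f=\mathrm{id}$, the swap, the collapses into $B_0$, and ``$f(B_2)\subseteq B_1$''. Not treated: (a) $f(B_1)\subseteq B_1$ and $f(B_2)\subseteq B_2$ with $f(0)\neq 0$. This is consistent at the outset (the block $\{0,1,k+1\}$ may land in a pair $\{i,k+i\}$ with $i\geq 2$, giving the candidate shift $f(j)=i+j-1$, $f(k+j)=k+i+j-1$), and it is only refuted by propagating until $f(k-i+1)=k$ forces the next image into the $\sigma$-singleton $\{2k+1\}$ inside the wrong $\rho$-class; your sentence ``in the identity pattern we get $f(0)=0$'' silently assumes this branch away. (b) $f(B_1\cup B_2)\subseteq B_2$, which must be shown to yield only constant maps. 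Finally, the residual sub-case is misanalysed: if $f(B_1\cup B_2)\subseteq B_1$ then indeed $f(1)=f(k+1)$ and $f(k)=f(2k+1)$, but propagation along the path yields a constant map, not the contradiction ``$k\in\{0,1,k+1\}$'' (that contradiction in fact belongs to the collapse $f(B_1)\subseteq B_0$, $f(B_2)\subseteq B_1$, where $f(1)=0\neq 1=f(k+1)$). The theorem survives, but as written the argument does not close all cases, and case (a) in particular requires a propagation argument you have not supplied.
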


One of Z\'adori's examples: case $n$ even.

\begin{figure}[h]
\includegraphics[width=7cm,height=8cm]{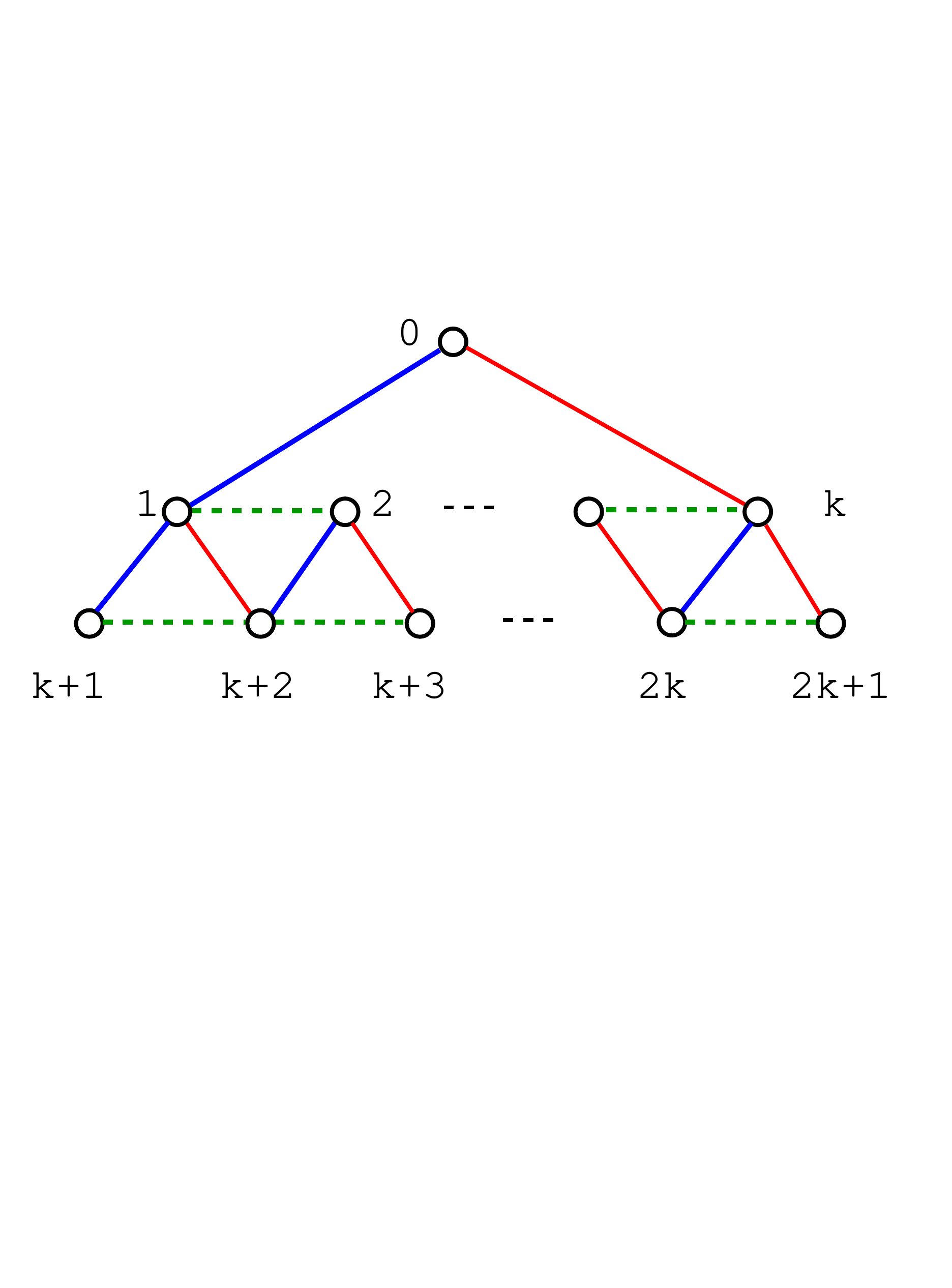}
\vspace{-6.0em}
\caption{Zadori'example: case $n$ even} \label{figure4}
\end{figure}

For $n$ odd, simply delete the node 0 which is located on top of the graph and relabel conveniently the vertices.

\subsection {A geometric construction} We describe  a general construction of semirigid systems of three equivalence relations  that  includes Z\'adori's examples

 Let $\R$ denotes  the real line,  let   $\R\times \R$ denotes the real
 plane. Let  $p_1$ and $p_2$ be the
      first and second projection from $\R\times\R$ onto $\R$ and  let 
      $p_0: \R\times \R \rightarrow \R$ the map $p_0:=p_1+p_2$. 
For $i= 0, 1, 2$, let  $\simeq_i$  be the kernel of $p_{i}$,
      i.e., for all $u, v \in \R\times \R$,  $u\simeq_iv$ if
      $p_i(u)=p_i(v)$.  Set $R:=(\R\times \R,  (\simeq_0,\simeq_1, \simeq_2))$.

The system $R$ is not semirigid. But, there are many subsets
 $C$ of the plane for which the system $R\restriction C$ 
is semirigid. As in \cite{delhomme-pouzet}, let us  introduce the notion of \emph{monogenic subset} of the plane.  

We define \emph{triangles} of the planes: there are the trivial ones: the singletons, the non-trivial ones are the $3$-element subsets $\{u_0, u_1,u_2\}$  of $E$ such that   $u_0\simeq_2 u_1$, $u_1\simeq_0 u_2$
      and $u_2\simeq_1 u_0$. The plane with the collection of triangles is an hypergraph and maps which preserve the three equivalences send triangles on triangles (possibly trivial). A subset $X$ of a subset $\mathcal C$ of the plane \emph{generates} $\mathcal C$ if for every (nontrivial) triangle $T$ included in $C$ there is a finite sequence of triangles $T_0, \dots T_n=T$ such that $\vert X\cap T_0\vert= \vert T_i \cap T_{i+1}\vert =2$  for $i<n$. 

We say that $C$ is \emph{monogenic}
if some subset of $C$ with at most two elements generates $C$.

Using Theorem 3.2 of \cite{delhomme-pouzet} recalled as Theorem \ref{thm:preserve}, one can prove. 

\begin{theorem}(Theorem 1.3  \cite{delhomme-pouzet})\label{maintheo} If  a finite subset $C$ of $\R\times \R$ is monogenic and has no center of symmetry  then $R\restriction C$ is semirigid.
\end{theorem}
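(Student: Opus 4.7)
The plan is to take an arbitrary self-map $f : C \to C$ preserving the three equivalences $\simeq_0, \simeq_1, \simeq_2$, and to show that $f$ must be either constant or the identity. The argument falls into three stages: a local triangle-completion rule that propagates $f$ along the generating chain, a lift of $f$ to a global equivalence-preserving affine self-map of $\R \times \R$ via Theorem~\ref{thm:preserve}, and a finiteness/symmetry endgame.

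First, for any non-trivial triangle $T = \{p, q, r\}$ of $\R \times \R$, the value $f(r)$ is determined by the pair $(f(p), f(q))$: if $f(p) = f(q)$ then, since the three equivalences pairwise intersect in the diagonal, one must also have $f(r) = f(p) = f(q)$; if $f(p) \ne f(q)$, then $\{f(p), f(q), f(r)\}$ is a non-trivial triangle of the same type and $f(r)$ is its unique third vertex. By monogenicity, fix a two-element generating set $X = \{u_0, u_1\} \subseteq T_0$ and a chain $T_0, T_1, \dots, T_m$ of non-trivial triangles in $C$ in which consecutive triangles share two vertices. Iterated triangle completion then determines $f$ on all of $C$ from the single pair $(f(u_0), f(u_1))$; in the collapse sub-case $f(u_0) = f(u_1)$, the collapse propagates along the chain and $f$ is constant on $C$, so one may assume $f(u_0) \ne f(u_1)$.

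Second, I claim that in this non-collapse sub-case, $f$ coincides on $C$ with the restriction of an equivalence-preserving self-map $F$ of $\R \times \R$. By Theorem~\ref{thm:preserve}, such an $F$ necessarily has the form $F(x, y) = (\alpha + h(x), \beta + h(y))$ for constants $\alpha, \beta \in \R$ and an additive $h : \R \to \R$. One chooses $h$, $\alpha$, $\beta$ so that $F$ matches $f$ on $u_0$ and $u_1$ (this forces the value of $h$ on the horizontal or vertical distance determined by $u_1 - u_0$; any additive extension to all of $\R$ will do). Since $F$ and $f$ obey the same triangle-completion rule and agree on the generators, induction along the chain yields $F|_C = f$ and in particular $F(C) \subseteq C$.

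Finally, translate so that $u_0 = (0,0)$ and let $G \subset \R$ be the additive subgroup generated by the first and second coordinates of points of $C$. Then $G$ is free abelian of finite rank, and $h$ restricts to a $\Z$-linear endomorphism of $G$. The finiteness of $C$ together with $F^n(C) \subseteq C$ for all $n \ge 0$ forces every orbit $\{h^n(x)\}_{n \ge 0}$ with $x \in G$ to be bounded in $\R$, so by Kronecker's theorem the integer characteristic polynomial of $h|_G$ has only roots of unity as roots, and $h^N = \mathrm{id}_G$ for some $N \ge 1$. A refinement, using that $h$ acts diagonally on $(x,y)$ while respecting the three specific line directions (horizontal, vertical, anti-diagonal) imposed by $\simeq_0, \simeq_1, \simeq_2$, together with the no-center-of-symmetry hypothesis, pins $h$ down to $\pm\mathrm{id}_G$. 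If $h = +\mathrm{id}_G$, then $F$ is a translation, which must be trivial since $F(C) \subseteq C$ and $C$ is finite, so $f = \mathrm{id}_C$. If $h = -\mathrm{id}_G$, then $F(v) = (\alpha, \beta) - v$ is a point reflection and $C$ is centrally symmetric about $(\alpha/2, \beta/2)$, contradicting the hypothesis. The main obstacle is precisely the reduction from ``$h^N = \mathrm{id}_G$'' to ``$h = \pm\mathrm{id}_G$'': additive maps on $\R$ can in principle realize rotations of any finite order on finitely generated subgroups via algebraic conjugations, and the monogenic geometry plus the no-symmetry hypothesis is what must be used to rule these out.
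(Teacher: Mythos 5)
Your first two stages are essentially sound and use Theorem~\ref{thm:preserve} in the intended way: the triangle-completion rule is correct (any two of the three equivalences meet in the diagonal, so the third vertex of a triangle, and its image, are determined by the other two), propagation along the generating chains determines $f$ from the pair $(f(u_0),f(u_1))$, the collapse case yields a constant map, and in the non-collapse case one can consistently manufacture a global map $F(x,y)=(\alpha+h(x),\beta+h(y))$ agreeing with $f$ on $\{u_0,u_1\}$ (consistency holds because $u_0,u_1$ are two vertices of a nontrivial triangle, hence related by one of the three equivalences) and therefore, by the same completion rule, on all of the triangle-covered part of $C$.

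The genuine gap is your third stage, and you half-concede it yourself. Two problems. First, the Kronecker step is not justified as written: boundedness of the orbits $h^n(x)$ in $\R$ controls only the one archimedean embedding of $G$ and says nothing about the other Galois conjugates of the eigenvalues of the integer matrix of $h|_G$, so you cannot conclude that the characteristic polynomial has only roots of unity. (What is true is stronger --- the orbits of the generators stay inside the finite sets $p_i(C)-p_i(C)$, hence are eventually periodic --- but even then one needs injectivity of $h|_G$, which you never establish, to pass from $h^{n+N}=h^{n}$ to $h^{N}=\mathrm{id}_G$.) Second, and decisively, the passage from ``$h$ has finite order on $G$'' to ``$h=\pm\mathrm{id}_G$'' is exactly where the theorem lives, and you only label it ``the main obstacle \dots\ what must be used'' without giving an argument; additive maps of finite order $>2$ on rank-$\geq 2$ subgroups of $\R$ do exist, and nothing you wrote excludes them. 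The missing idea is an elementary consequence of monogenicity that makes the whole finite-order machinery unnecessary: two distinct vertices of a nontrivial triangle of size $s$ differ by $s$ times one of the vectors $(1,0)$, $(0,1)$, $(1,-1)$ (up to sign), so two consecutive triangles of a chain, sharing two vertices, have the same size; hence \emph{all} nontrivial triangles of a monogenic $C$ have the common size $s_0$ determined by $u_1-u_0$, and the triangle-covered part of $C$ lies in $u_0+s_0\Z\times s_0\Z$. Since $f$ sends nontrivial triangles of $C$ to nontrivial triangles of $C$, necessarily $\lvert h(s_0)\rvert=s_0$, so $h$ is $\pm\mathrm{id}$ on the relevant (cyclic, after rescaling) group outright. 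Your endgame then does finish correctly: $h=+\mathrm{id}$ makes $F$ a translation, which is trivial on a finite invariant set, so $f=\mathrm{id}_C$; $h=-\mathrm{id}$ makes $F$ a point reflection with $F(C)=C$, contradicting the absence of a center of symmetry.
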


The proof is based on Theorem 3.2 of \cite{delhomme-pouzet} recalled as Theorem \ref{thm:preserve}. 

A simple example, which is at the origin of this result, is the following.

For $n\in \N$ set $T_{n}:=\{(i,j) \in \N\times\N: i+j\leq
      n\}$. Then $T_{n}$ satisfies the hypotheses of Theorem
     \ref{maintheo}. 
Hence  $R\restriction T_{n}$ is semirigid. 
This yields an  example of  a semirigid system of three  pairwise
      isomorphic equivalence relations on a set  having
      $\frac{(n+1)(n+2)}{2}$ elements. 
      
\begin{figure}[h!]
\begin{center}
\includegraphics[scale=0.30]{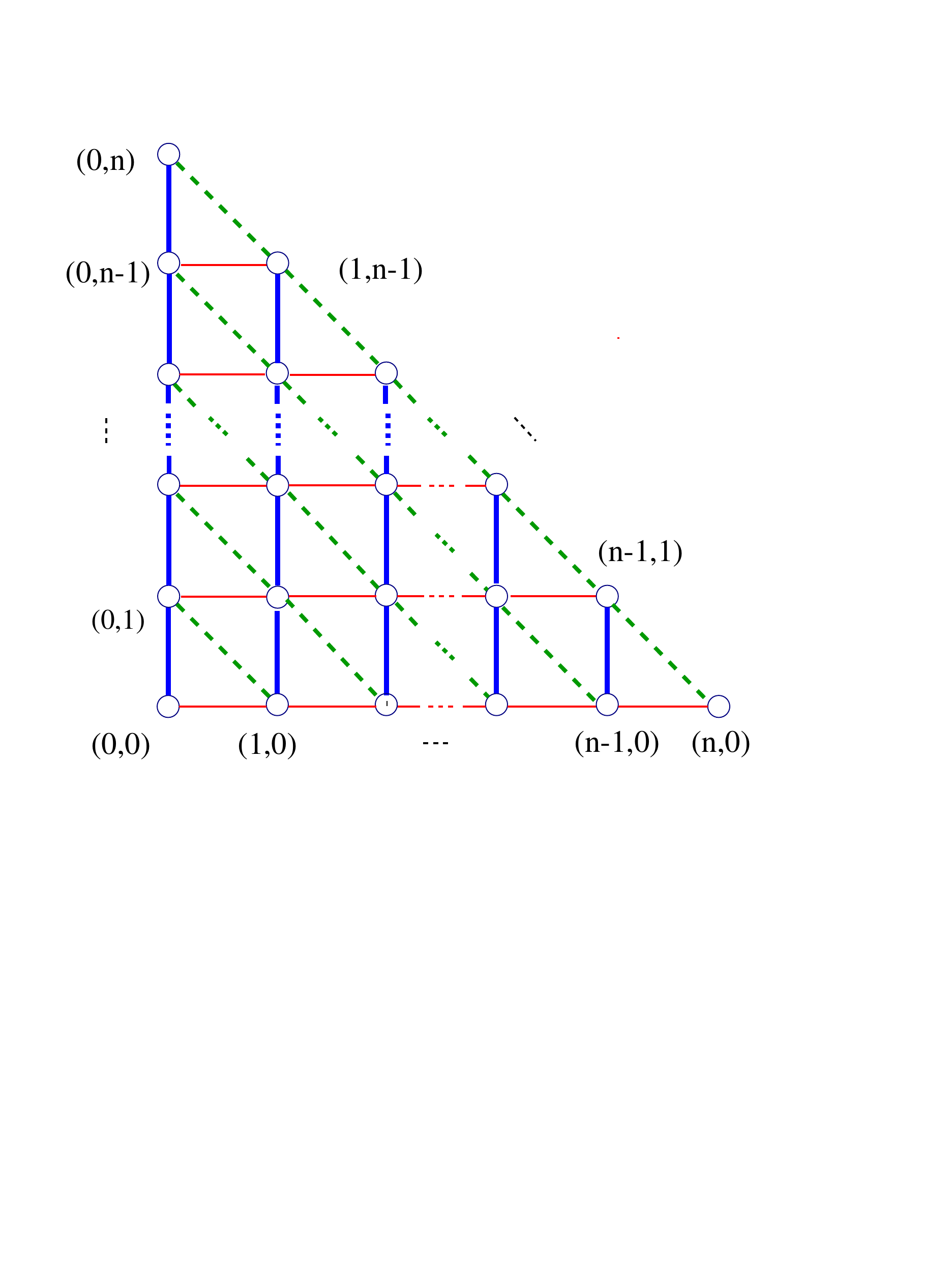}
\end{center}
\caption{The set $T_n$ with the three equivalence relations}
\end{figure}

Set  $T_{n,2}:=\{(i,j) \in T_{n}: i+j\in \{n-1, n\}\}$ and 
$T'_{n,2}:=T_{n,2}\cup\{(0,0)\}$. 

Both sets satisfy the hypotheses of Theorem \ref {maintheo}, hence
the induced $R\restriction T_{n, 2}$ and $R\restriction T'_{n, 2}$
      are semirigid. They are isomorphic to those of   Z\'adori.
The technique of the proof  of Theorem \ref{maintheo} yields:

\begin{theorem} \label{thm:infinite}For each cardinal $\kappa$, $\kappa\not \in \{2,4\}$ and  $\kappa\leq 2^{\aleph_0}$, there exists a semirigid system  of three equivalences on a set of cardinality $\kappa$.
\end{theorem}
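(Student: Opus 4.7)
The plan is to split the argument by cardinality. For finite $\kappa\notin\{2,4\}$, the case $\kappa = 1$ is trivial (the only self-map of a singleton is the identity, which also equals the unique constant map), while for $\kappa = 3$ and $\kappa \geq 5$ Z\'adori's Theorem~\ref{zadori} directly supplies a semirigid system of three equivalences on a set of size $\kappa$. The substance of the statement therefore lies in the infinite range $\aleph_0 \leq \kappa \leq 2^{\aleph_0}$.

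For the infinite range I would first observe that the hypothesis of finiteness of $C$ in Theorem~\ref{maintheo} is inessential. Its proof rests on Theorem~\ref{thm:preserve}, which is cardinality-free, together with the chain-propagation argument that deduces the values of a self-map $f$ of $C$ preserving $\simeq_0,\simeq_1,\simeq_2$ from its values on a two-element generating pair by traversing a \emph{finite} chain of triangles. Each such chain remains finite regardless of $|C|$, so the argument survives verbatim for infinite $C$; the absence of central symmetry then excludes the involutive option among the affine maps produced by Theorem~\ref{thm:preserve} and forces $f$ to be either the identity or a constant map.

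Next I would exhibit, for each infinite $\kappa \leq 2^{\aleph_0}$, a monogenic $C_\kappa \subseteq \R\times\R$ of cardinality $\kappa$ with no center of symmetry. Pick a subgroup $G \leq (\R,+)$ containing $\Z$ with $|G|=\kappa$: take $G=\Z$ when $\kappa=\aleph_0$; otherwise take $G = \Z + \sum_{i<\lambda}\Q\alpha_i$ for $\Q$-linearly independent irrationals $\alpha_i$, with $\lambda$ chosen so that $|G|=\kappa$. Set $S := G \cap [0,\infty)$ and $C_\kappa := S \times S$. Group closure of $G$ guarantees that for $a,b,c \in S$ with $c+b\geq a$ the third vertex $(a,c+b-a)$ of the triangle through $(a,b),(c,b)$ lies in $C_\kappa$, so $C_\kappa$ is closed under triangle completion within the first quadrant. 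The countable backbone $\N\times\N \subseteq C_\kappa$ is monogenic with generator $\{(0,0),(1,0)\}$ via the ``zigzag'' chains $\{(i,j),(i+1,j),(i,j+1)\} \to \{(i+1,j),(i,j+1),(i+1,j+1)\}$, and any triangle of $C_\kappa$ with a non-integer vertex is connected to this backbone by short intermediate triangles produced by the closure of $S$ under the relevant arithmetic. The inclusion $C_\kappa \subseteq [0,\infty)^2$ forbids any center of symmetry, since the image of a vertex with arbitrarily large coordinates under a point reflection would be forced to have a negative coordinate.

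The delicate point is to verify monogenicity of the uncountable $C_\kappa$: one must confirm that chain-propagation from $\{(0,0),(1,0)\}$ reaches every triangle of $C_\kappa$, not merely those in the integer backbone. The proposed remedy is first to propagate inside $\N\times\N$ to a triangle adjacent to the integer ``projection'' of the target, then make short pendant hops in which one integer coordinate is replaced by its non-integer counterpart in $S$; each such hop is legitimate because $G$ is a group and $[0,\infty)$ is closed under subtraction of a smaller element, so the completing vertex always lies in $C_\kappa$. With this, the extended Theorem~\ref{maintheo} applies to $C_\kappa$ and yields the desired semirigid system of three equivalences.
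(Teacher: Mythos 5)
Your handling of the finite cases (Z\'adori's theorem for $\kappa=3$ and $\kappa\geq 5$, the trivial case $\kappa=1$) is fine, but the infinite part of your argument has a fatal gap: Theorem~\ref{maintheo} does \emph{not} extend verbatim to infinite $C$. The finiteness hypothesis is not used merely to keep the triangle chains finite; it is what eliminates the non-trivial \emph{translations} among the affine maps supplied by Theorem~\ref{thm:preserve}. A map of the form $u\mapsto u+t$ preserves $\simeq_0,\simeq_1,\simeq_2$ on any subset it maps into itself; on a finite set such a map is forced to be the identity (it is injective, hence a bijection of $C$, and summing coordinates over $C$ gives $t=0$), but on an infinite set it need not be. Your witness $C_\kappa=S\times S$ with $S=G\cap[0,\infty)$ is exactly of the bad kind: since $1\in G$, the translation $f(x,y)=(x+1,y+1)$ maps $C_\kappa$ into itself, preserves all three equivalences, and is neither the identity nor constant, so $R\restriction C_\kappa$ is not semirigid. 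Your ``no center of symmetry'' observation only rules out the point-reflection case $h=-\mathrm{id}$; it says nothing about the translation case $h=\mathrm{id}$, which is where your construction fails.

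The paper's construction is designed precisely to block translations: for $\kappa=\aleph_0$ it takes the thin unbounded band $B=\{(x,y)\in\Z\times\Z:\ x+y\in\{1,2\}\}$ together with the extra point $(0,0)$; the band forces any surviving translation to have direction $(a,-a)$, and the point $(0,0)$, whose $p_0$-value $0$ is not attained on the band, then forces $a=0$. For uncountable $\kappa\leq 2^{\aleph_0}$ it adjoins a \emph{bounded} triangle $\Delta$ of points of a dense subgroup $D\supseteq\Q$ of cardinality $\kappa$, boundedness again killing non-trivial translations. To salvage your approach you would have to replace $S\times S$ by a set admitting no non-trivial translation (and no point reflection) into itself, and then redo the monogenicity analysis for that set; ``monogenic with no center of symmetry'' is simply not a sufficient hypothesis in the infinite case.
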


For an example, let $A:=\Z$, $E:=A\times A$ and $$B:=\{(x,y)\in \Z\times \Z:  x+y\in \{1,2\}\}\cup \{(0,0)\}.$$ 
The system $\mathcal R\restriction B$ induced by the system $\mathcal R$ of three equivalence relations on $\R\times \R$  is semirigid.

\begin{figure}[h!]
\begin{center}
\includegraphics[scale=0.35]{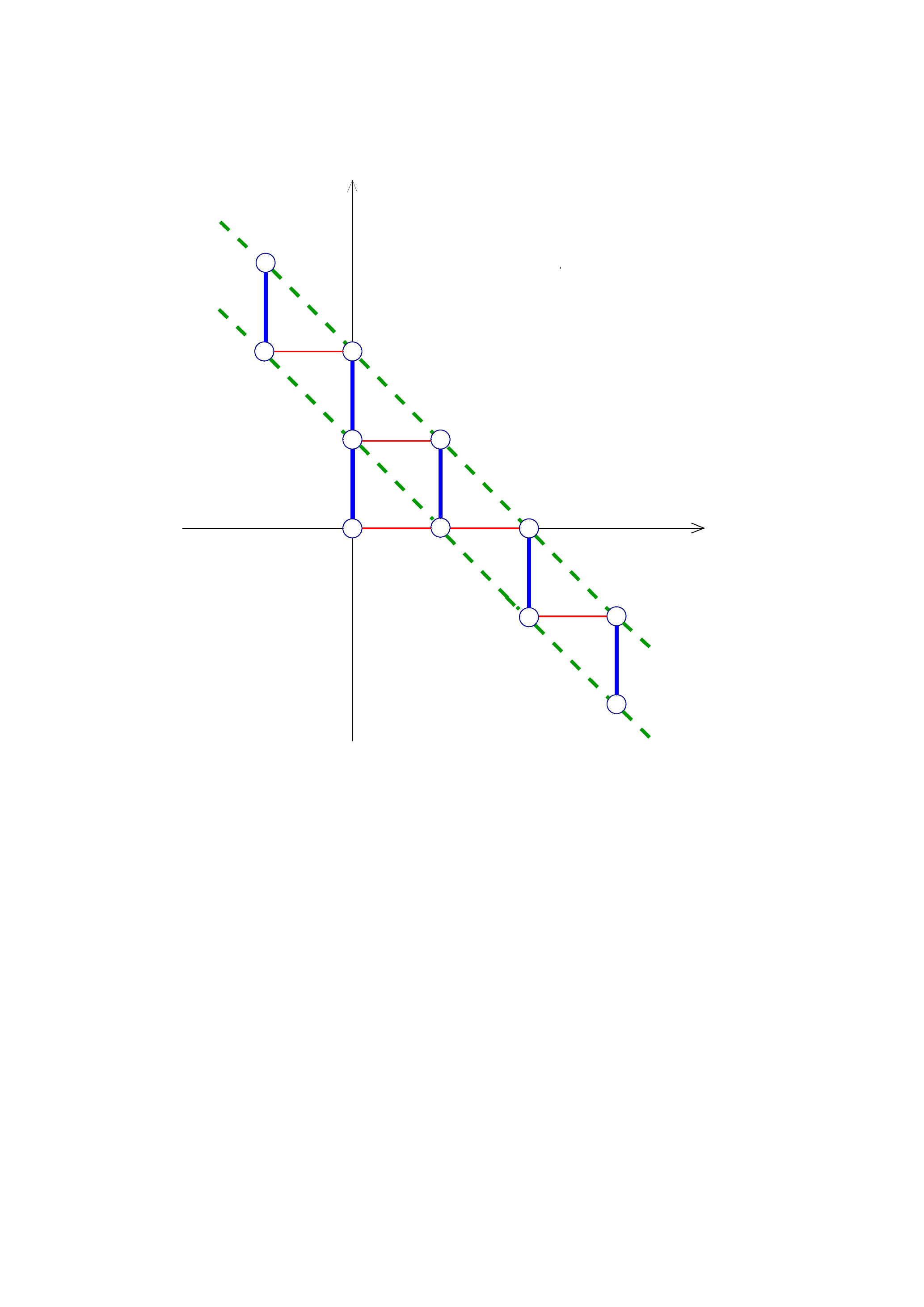}
\end{center}
\caption{An infinite band with a triangle}
\end{figure}
We recall that a subset $X$ of $\R$ is \emph{dense} if for every $x<y$ in $\R$ there is some $z\in X$ such that $x<z<y$. We also recall that every additive subgroup $D$ of $\R$ is either discrete, in which case $D=\Z\cdot r$ for some $r\in \R$, or dense. Let $D$ be an additive subgroup of $\R$ containing $\Z$. Set $\Delta:=\{(x,y)\in D\times D: 0\leq x, 0\leq y, x+y\leq 1\}$. If $D$ is a dense subgroup of $\R$ including $\Q$ then the system $\mathcal R\restriction ({B\cup\Delta})$ induced by the system $\mathcal R$ is semirigid.

\begin{figure}[h!]
\begin{center}
\includegraphics[width=3.5in]{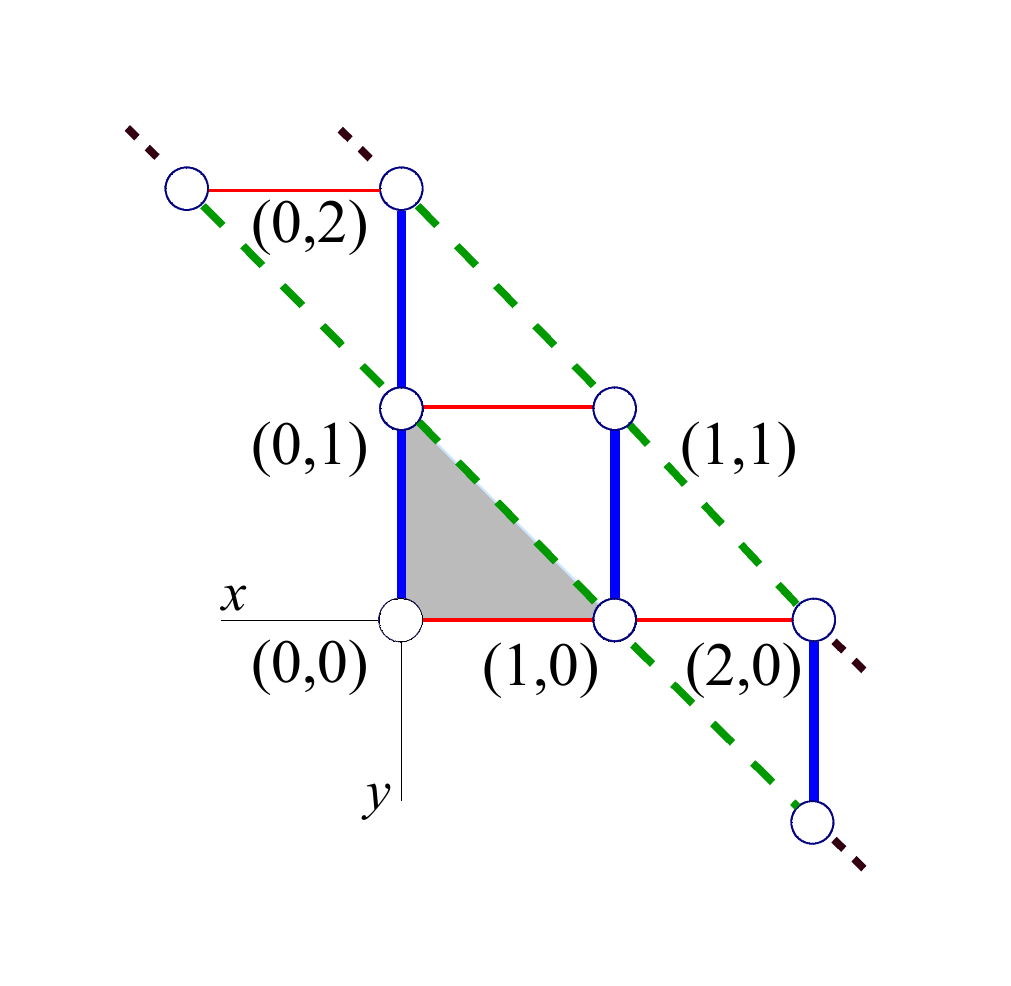}
\end{center}
\caption{A semirigid system  $B \cup \Delta$.}
\end{figure}\label{figuresemirigid}

Figure \ref{figuresemirigid} shows the set $B \cup \Delta$. The set $\Delta$ (shadowed) is
represented schematically and  the elements of the infinite set $B$ of two-dimensional integer points  by circles.

\begin{problem} Does the conclusion of Theorem \ref{thm:infinite} extend to every cardinal $\kappa>2^{\aleph_0}$?
\end{problem}

Related problems:
\begin{itemize}
\item  Describe the semirigid systems of the plane.
\item No useful characterization of semirigid systems is  known. 
\item No algorithm known to decide  in reasonable time whether or
      nor a system of $k$ equivalence relations on a set of size $n$ is
      semirigid or not.
\end{itemize}
\section{Conclusion}
With the present metric approach we have surveyed some categorical aspects of systems of reflexive binary relations.  We skipped the study of non necessarily reflexive relations and also the case of $n$-ary relations. This later case was considered in \cite{pouzet-rosenberg}. It could be studied in the same spirit as generalized metric spaces. Recent work on $n$-distances  by Marichal and his collaborators \cite{kiss-marichal-teheux} point in this direction.   

\end{document}